\documentclass{article}%
\usepackage{amsmath}
\usepackage{amsfonts}
\usepackage{amssymb}
\usepackage{graphicx}%
\providecommand{\U}[1]{\protect\rule{.1in}{.1in}}
\newtheorem{theorem}{Theorem}

\newtheorem{corollary}[theorem]{Corollary}

\newtheorem{definition}[theorem]{Definition}

\newtheorem{lemma}[theorem]{Lemma}

\newtheorem{remark}[theorem]{Remark}

\newenvironment{proof}[1][Proof]{\noindent\textbf{#1.} }{\ \rule{0.5em}{0.5em}}
\begin{document}

\title{Weak mean random attractors for non-local random and stochastic
reaction-diffusion equations}
\author{R. Caballero$^{1}$, P. Mar\'{\i}n-Rubio$^{2}$ and Jos\'{e} Valero$^{1}$\\$^{1}${\small Centro de Investigaci\'{o}n Operativa, Universidad Miguel
Hern\'{a}ndez de Elche,}\\{\small Avda. Universidad s/n, 03202, Elche (Alicante), Spain}\\{\small E.mail:\ jvalero@gmail.com}\\$^{2}${\small Dpto. Ecuaciones Diferenciales y An\'{a}lisis Num\'{e}rico,}\\{\small Universidad de Sevilla, C/Tarfia, 41012-Sevilla, Spain}\\{\small E.mail: pmr@us.es}}
\date{In love memory of Prof. Mar\'{\i}a Jos\'{e} Garrido-Atienza, colleague and
friend, with deep sorrow}
\maketitle

\begin{abstract}
In this paper, we prove the existence of weak pullback mean random attractors
for a non-local stochastic reaction-diffusion equation with a nonlinear
multiplicative noise. The existence and uniqueness of solutions and weak
pullback mean random attractors is also established for a deterministic
non-local reaction-diffusion equations with random initial data.

\end{abstract}

\bigskip

\textbf{Keywords: }mean random attractor, pullback attractor, stochastic
reaction-diffusion equation, non-local equation

\textbf{AMS Subject Classification (2020): }35B40, 35B41, 35K55, 35K57, 35K59,
37H05, 37H30, 60H15

\section{Introduction}

The theory of global attractors for random dynamical systems in
infinite-dimensional spaces has been developed intensively over the last 20
years. In particular, the theory of pathwise pullback random attractors
\cite{CrauelFlandoli} has been applied succesfully to stochastic equations of
different types (see e.g. \cite{CGSV}, \cite{CLR}, \cite{CLV02}, \cite{CMV},
\cite{CDF}, \cite{FanChen}, \cite{fl-schm}, \cite{Han}, \cite{imk-schm},
\cite{JiaDingGao}, \cite{MRRob}, \cite{Wang09}, \cite{YanLiJi}, \cite{Zhao},
\cite{ZhouYin} among many others). However, this theory has an important
limitation. Namely, it relies on a suitable change of variable which can be
applied only to linear stochastic perturbations. For this reason, several
authors have developed the theory of mean-square random attractors (see
\cite{Gu}, \cite{KloedenLorentz}, \cite{Wang}, \cite{Wang2},
\cite{WangZhuKloeden}, \cite{WuKloeden}), which can be applied to equations
with much more general nonlinear noises.

In this paper we apply the theory of weak mean-square random attractors
developed in \cite{Wang} to the following stochastic non-local
reaction-diffusion problem%
\begin{equation}%
\begin{array}
[c]{l}%
du=(a(\Vert u\Vert_{V}^{2})\Delta u+f(u)+h(t,x))dt+\sigma\left(  u\right)
dw\left(  t\right)  \ \text{in }(\tau,\infty)\times\mathcal{O},\\
u=0\quad\text{on }(\tau,\infty)\times\partial\mathcal{O},\\
u(\tau,x)=u_{\tau}(x)\quad\text{for }x\in\mathcal{O},
\end{array}
\label{EqStoch}%
\end{equation}
where $V=H_{0}^{1}(\mathcal{O})$ and $\mathcal{O}$ is a bounded open set of
$\mathbb{R}^{n}.$ Following \cite{Wang} under suitable conditions on the
domain $\mathcal{O}$ and the functions $f$, $h,\sigma$ and $a$ we prove the
existence of a weak mean-square random attractor in two situations: 1) The
equation is deterministic (that is, $\sigma\left(  u\right)  \equiv0$) but the
initial condition $u_{\tau}$ is a random variable;\ 2) $w(t)$ is a two-sided
scalar Wiener process. The existence of weak mean-square random attractors for
the local problem (that is, $a\equiv1$) was established in \cite{Wang}. These
results were generalized to stochastic reaction-diffusion equations generated
by the $p$-laplacian operator \cite{Wang2}.

Reaction-diffusion equations with non-local diffusion of the type%
\[
u_{t}-a(l(u(t)))\Delta u=f(t,u),
\]
where $l:X\rightarrow\mathbb{R}$ is a suitable functional and $X$ is the phase
space (usually $L^{2}(\mathcal{O})$ or $H_{0}^{1}(\mathcal{O})$), appear in
many applications in Physics, Biology and other sciences (see
\cite{ChipotLovat97}, \cite{ChipotLoval99}, \cite{ChipotValente} and the
references therein). When $l$ is a linear functional of the type%
\[
l(u)=\int_{\Omega}\xi(x)u(x,t)dx,
\]
where $\xi(x)$ is a given function in $L^{2}(\mathcal{O})$, the existence of
properties of global attractors in the autonomous and non-autonomous
situations have been established in \cite{Ahn}, \cite{CaHeMa15}, \cite{d0},
\cite{CaHeMa18}, \cite{CaHeMa18B}. The drawback of this case is that we cannot
obtain a Lypaunov function for the solutions, which makes it difficult to
analyze the fine structure of the attractor. However, if we consider a
functional $l$ of the type%
\[
l(u)=\Vert u\Vert_{V}^{2},
\]
then a Lyapunov functions exists \cite{ChipotValente}. In this situation, the
existence and structure of the global attractor in both the single and
set-valued frameworks have been studied in the papers \cite{CCMRV},
\cite{CMRV}, \cite{CarMor}, \cite{LiCarMor}, \cite{MorVal}.

In equation (\ref{EqStoch}) the functional $l(u)=\Vert u\Vert_{V}^{2}$ helps
us to obtain the existence and uniqueness of solutions, because the operator
$u\mapsto-a(\Vert u\Vert_{V}^{2})\Delta u$ is monotone as a map from $V$ onto
its conjugate space $V^{\ast}$.

This paper is organized as follows. In Section 2, we recall the main results
of the theory of weak pullback mean random attractors developed in
\cite{Wang}. Also, we extend it by proving under suitable assumptions that the
mean random attractor can be characterized using complete trajectories. In
Section 3, we prove the existence of weak pullback mean random attractors for
the deterministic equation (\ref{EqStoch}) with random initial data. Finally,
in Section 4, we obtain the existence of weak pullback mean random attractors
for the stochastic equation (\ref{EqStoch}).

\section{Preliminaries:\ abstract theory of weak pullback mean random
attractors}

In this section we recall the main results of the theory of pullback mean
random attractors developed in \cite{Wang} and add some new ones about the
characterization of the pullback attractor by means of complete trajectories.

\subsection{Weak pullback mean attractors over probability spaces}

In this subsection we provide the theory of weak $\mathcal{D}$-pullback mean
random attractors for a mean random dynamical system over a probability space.

Let $X$ be a Banach space with norm $\left\Vert \text{\textperiodcentered
}\right\Vert _{X}$ and let $\left(  \Omega,\mathcal{F},P\right)  $ be a
probability space. For $p\in(1,\infty)$ llet us consider the Banach space
$L^{p}(\Omega,\mathcal{F};X)$ ($L^{p}(\Omega,X)$ for short) of Bochner
integrable functions $y:\Omega\rightarrow X$ such that%
\[
\int_{\Omega}\left\Vert y\right\Vert _{X}^{2}dP<\infty.
\]
We denote by $\mathcal{D}$ a collection of some families $D=\{D(t)\}_{t\in
\mathbb{R}}$ of non-empty bounded sets $D(t)\subset L^{p}(\Omega,X)$:%
\[
\mathcal{D}=\{D=\{D(t)\}_{t\in\mathbb{R}}:D(t)\in\beta\left(  L^{p}%
(\Omega,X)\right)  \text{ satisfy suitable conditions}\},
\]
where $\beta\left(  L^{p}(\Omega,X)\right)  $ is the set of all non-empty
bounded subsets of $L^{p}(\Omega,X)$. The collection $\mathcal{D}$ is said to
be inclusion-closed if $D\in\mathcal{D}$ and $B(t)\subset D(t)$, $B(t)\in
\beta\left(  L^{p}(\Omega,X)\right)  $, for all $t\in\mathbb{R}$, imply that
$B=\{B(t)\}_{t\in\mathbb{R}}\in\mathcal{D}.$

A family $D=\{D(t)\}_{t\in\mathbb{R}}$ is said to be compact (weakly compact,
bounded, etc.) if each set $D(t)$ is compact (weakly compact, bounded, etc.).

A family of maps $\Phi:\mathbb{R}^{+}\times\mathbb{R}\times L^{p}%
(\Omega,X)\rightarrow L^{p}(\Omega,X)$ is called a mean random dynamical
system if:

\begin{itemize}
\item $\Phi(0,\tau)$ is the identity map for all $\tau\in\mathbb{R};$

\item $\Phi(t+s,\tau)=\Phi(t,s+\tau)\circ\Phi(s,\tau)$ for all $t,s\in
\mathbb{R}^{+}$, $\tau\in\mathbb{R}$.
\end{itemize}

\begin{definition}
A family $K=\{K(t)\}_{t\in\mathbb{R}}\in\mathcal{D}$ is called a $\mathcal{D}%
$-pullback weakly attracting set if for all $\tau\in\mathbb{R}$,
$D\in\mathcal{D}$ and any weak neighborhood $\mathcal{N}^{w}(K(\tau))$ of
$K(\tau)$ there exists $T(\tau,D,\mathcal{N}^{w}(K(\tau)))>0$ such that%
\[
\Phi(t,\tau-t)D(\tau-t)\subset\mathcal{N}^{w}(K(\tau))
\]
as soon as $t\geq T.$
\end{definition}

We introduce the main concept of weak $\mathcal{D}$-pullback mean random attractor.

\begin{definition}
A family $\mathcal{A}=\{\mathcal{A}(t)\}_{t\in\mathbb{R}}\in\mathcal{D}$ is
called a weak $\mathcal{D}$-pullback mean random attractor if:

\begin{enumerate}
\item $\mathcal{A}$ is weakly compact.

\item $\mathcal{A}$ is $\mathcal{D}$-pullback weakly attracting.

\item $\mathcal{A}$ is minimal, that is, if $B\in\mathcal{D}$ is weakly
compact and $\mathcal{D}$-pullback weakly attracting, then $\mathcal{A}%
(t)\subset B(t)$ for all $t\in\mathbb{R}$.
\end{enumerate}
\end{definition}

It follows from this definition that a weak $\mathcal{D}$-pullback mean random
attractor is unique if it exists.

Further we need the concept of $\mathcal{D}$-pullback absorbing family.

\begin{definition}
A family $K=\{K(t)\}_{t\in\mathbb{R}}\in\mathcal{D}$ is called a $\mathcal{D}%
$-pullback absorbing set if for all $\tau\in\mathbb{R}$ and $D\in\mathcal{D}$
there exists $T(\tau,D)>0$ such that%
\[
\Phi(t,\tau-t)D(\tau-t)\subset K(\tau)
\]
as soon as $t\geq T.$
\end{definition}

\begin{theorem}
\label{AttrExist}\cite[p.2183]{Wang} Let $X$ be reflexive. Assume that
$\mathcal{D}$ is an inclusion-closed collection. If the mean random dynamical
system $\Phi$ possesses a weakly compact $\mathcal{D}$-pullback absorbing
family $K=\{K(t)\}_{t\in\mathbb{R}}\in\mathcal{D}$, then $\Phi$ has a unique
weak $\mathcal{D}$-pullback mean random attractor given by%
\begin{equation}
\mathcal{A}(t)=\cap_{r\geq0}\overline{\cup_{s\geq r}\Phi(s,t-s)K(t-s)}%
^{w}\text{, }t\in\mathbb{R}\text{,} \label{AttrCharac}%
\end{equation}
where $\overline{C}^{w}$ means the closure of $C$ in the weak topology of
$L^{p}(\Omega,X)$.
\end{theorem}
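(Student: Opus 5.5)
We prove the statement directly from the definitions. The only properties we use are that $L^{p}(\Omega,X)$ is reflexive, so bounded weakly closed sets are weakly compact, and that $\mathcal{D}$ is inclusion-closed. Reflexivity of $L^{p}(\Omega,X)$ follows from that of $X$ together with $1<p<\infty$.

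\emph{Nonemptiness and weak compactness of $\mathcal{A}(t)$.} Fix $t$ and set $F_{r}=\overline{\cup_{s\geq r}\Phi(s,t-s)K(t-s)}^{w}$, a family decreasing in $r$. Since $K\in\mathcal{D}$ is absorbing, there is $T=T(t,K)$ with $\Phi(s,t-s)K(t-s)\subset K(t)$ for all $s\geq T$. Hence $F_{r}\subset K(t)$ for $r\geq T$, because $K(t)$ is weakly compact and therefore weakly closed. So $\mathcal{A}(t)=\cap_{r\geq T}F_{r}$ is a decreasing intersection of nonempty weakly closed subsets of the weakly compact set $K(t)$. By the finite intersection property it is nonempty and weakly compact, and $\mathcal{A}(t)\subset K(t)$. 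Inclusion-closedness then gives $\mathcal{A}\in\mathcal{D}$.

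\emph{Weak attraction.} Argue by contradiction. Suppose there are $\tau$, $D\in\mathcal{D}$, a weak neighborhood $\mathcal{N}$ of $\mathcal{A}(\tau)$, $t_{n}\rightarrow\infty$ and $x_{n}\in D(\tau-t_{n})$ with $y_{n}=\Phi(t_{n},\tau-t_{n})x_{n}\notin\mathcal{N}$. We may take $\mathcal{N}$ weakly open. Fix $r\geq 0$. Using the cocycle property, write
\[
y_{n}=\Phi(s,\tau-s)\,\Phi(t_{n}-s,\tau-t_{n})x_{n}
\]
for a fixed $s\geq r$. By absorption of $D$ at time $\tau-s$, for $n$ large the inner point lies in $K(\tau-s)$, so $y_{n}\in\cup_{s\geq r}\Phi(s,\tau-s)K(\tau-s)$ for all large $n$. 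For $n$ large we also have $y_{n}\in K(\tau)$. Hence every weak cluster point $y$ of $(y_{n})$ lies in $F_{r}$ for every $r$, so $y\in\mathcal{A}(\tau)\subset\mathcal{N}$. Such a cluster point exists because $K(\tau)$ is weakly compact; since we are dealing with nets in the weak topology, we use weak compactness directly rather than sequences. On the other hand, $y_{n}$ lies in the weakly closed set $\mathcal{N}^{c}$, so $y\in\mathcal{N}^{c}$. This is a contradiction.

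\emph{Minimality.} Let $B\in\mathcal{D}$ be weakly compact and weakly attracting, and take $y\in\mathcal{A}(t)$. Let $\mathcal{N}$ be a weakly closed weak neighborhood of $B(t)$. Since $K\in\mathcal{D}$, attraction gives $\Phi(s,t-s)K(t-s)\subset\mathcal{N}$ for $s\geq T'$. Hence $F_{T'}\subset\mathcal{N}$, and so $y\in\mathcal{N}$. Because $B(t)$ is weakly compact and the weak topology is locally convex Hausdorff, hence regular, $B(t)$ is the intersection of its closed weak neighborhoods. Therefore $y\in B(t)$. Uniqueness then follows from minimality.

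The main obstacle is the attraction step. There one must use the cocycle property carefully to place $y_{n}$ in the tail sets $F_{r}$ uniformly in $r$, and justify the existence of weak cluster points through compactness of $K(\tau)$ (working with nets) rather than relying on sequential arguments.
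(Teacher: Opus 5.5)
Your proof is correct. The paper does not prove this theorem; it quotes it from Wang, and your argument is the standard $\omega$-limit-set proof used there: $K$ absorbs itself, so the tail closures lie in the weakly compact $K(t)$; weak attraction follows by contradiction from weak compactness of $K(\tau)$ and the cocycle property; minimality follows by separating a point from the weakly compact $B(t)$ in the Hausdorff weak topology.

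One minor remark: your argument never actually uses reflexivity, since weak compactness of $K(t)$ is already a hypothesis. Reflexivity only matters in the applications, where it makes the closed balls $K(\tau)$ weakly compact.
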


\bigskip

The weak $\mathcal{D}$-pullback mean random attractor is called invariant if
\[
\mathcal{A}(t)=\Phi(\tau,t-\tau,\mathcal{A}(t-\tau))\text{ for all }\tau
\geq0,\ t\in\mathbb{R}\text{. }%
\]
The mean random dynamical system $\Phi$ is weakly continuous if the map
$\Phi(t,\tau):L^{p}(\Omega,X)\rightarrow L^{p}(\Omega,X)$ is weakly continuous
for any $t\geq0$, $\tau\in\mathbb{R}$.

\begin{lemma}
Let the conditions of Theorem \ref{AttrExist} hold true. Assume that $\Phi$ is
weakly continuous. Then the weak $\mathcal{D}$-pullback mean attractor
$\mathcal{A}$ is invariant.
\end{lemma}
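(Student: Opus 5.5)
We prove the two inclusions $\Phi(\tau,t-\tau)\mathcal{A}(t-\tau)\subset\mathcal{A}(t)$ and $\mathcal{A}(t)\subset\Phi(\tau,t-\tau)\mathcal{A}(t-\tau)$ for fixed $\tau\ge0$, $t\in\mathbb{R}$. Both rest on the formula (\ref{AttrCharac}) and on a sequential description of it. We use that $K(t)$ is weakly compact in the reflexive space $L^{p}(\Omega,X)$, hence bounded. Since $K$ is absorbing and $K\in\mathcal{D}$, for each $r$ the set $\cup_{s\ge r}\Phi(s,t-s)K(t-s)$ lies in $K(t)$ for $r$ large. Bounded sets of a reflexive space are weakly relatively compact, and weak closures of bounded sets are sequentially accessible by Eberlein--\v{S}mulian. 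This gives the description
\[
y\in\mathcal{A}(t)\iff y=\text{w-}\lim_{n}\Phi(s_n,t-s_n)x_n,\quad s_n\to\infty,\ x_n\in K(t-s_n).
\]
To justify it, take the weak closures inside the weakly compact set $K(t)$. A decreasing family of nonempty weakly closed subsets of a weakly compact set has nonempty intersection. Moreover $y$ lies in every closure, so diagonal sequences exist. This diagonal step needs a little care: weak topologies are not metrizable. I would handle it by restricting to the separable closed subspace spanned by the relevant countable sets, where the weak topology on bounded sets is metrizable. Alternatively, one can argue with nets throughout, which avoids the issue entirely. I expect this to be the main technical obstacle, though it is routine.

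For the inclusion $\Phi(\tau,t-\tau)\mathcal{A}(t-\tau)\subset\mathcal{A}(t)$, take $y\in\mathcal{A}(t-\tau)$ written as $y=\text{w-}\lim\Phi(s_n,t-\tau-s_n)x_n$ with $x_n\in K(t-\tau-s_n)$. Weak continuity of $\Phi(\tau,t-\tau)$ and the cocycle property give
\[
\Phi(\tau,t-\tau)y=\text{w-}\lim_n\Phi(\tau+s_n,t-(\tau+s_n))x_n .
\]
Since $\tau+s_n\to\infty$, this element lies in $\mathcal{A}(t)$. If weak continuity is only known for nets, or sequentially, use the corresponding version of the description above.

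For the reverse inclusion, let $y\in\mathcal{A}(t)$ with $y=\text{w-}\lim\Phi(s_n,t-s_n)x_n$. For $s_n>\tau$, write
\[
\Phi(s_n,t-s_n)x_n=\Phi(\tau,t-\tau)z_n,\qquad z_n=\Phi(s_n-\tau,t-\tau-(s_n-\tau))x_n .
\]
By absorption, $z_n\in K(t-\tau)$ for large $n$, and this set is weakly compact. Hence a subsequence satisfies $z_n\rightharpoonup z$. By the sequential description, $z\in\mathcal{A}(t-\tau)$, since $s_n-\tau\to\infty$ and $x_n\in K((t-\tau)-(s_n-\tau))$. Weak continuity then gives $\Phi(\tau,t-\tau)z_n\rightharpoonup\Phi(\tau,t-\tau)z$. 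Uniqueness of weak limits yields $y=\Phi(\tau,t-\tau)z$, which completes the proof.
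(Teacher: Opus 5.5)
Your proof is correct. For the inclusion $\mathcal{A}(t)\subset\Phi(\tau,t-\tau)\mathcal{A}(t-\tau)$ it is the paper's argument, except that the paper works with nets throughout. There the weakly convergent subnet of $x_\alpha\in K(t-\tau)$ comes for free, with no Eberlein--\v{S}mulian and no diagonal extraction.

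The real difference is in the other inclusion. The paper does not push limits through $\Phi$ there. It applies the inclusion just proved at time $t-\tau$, namely $\mathcal{A}(t-\tau)\subset\Phi(r,t-\tau-r)\mathcal{A}(t-\tau-r)$. It then uses the cocycle property and the pullback attraction of $\mathcal{A}$ itself, which is legitimate since $\mathcal{A}\in\mathcal{D}$. Hence $\Phi(\tau,t-\tau)\mathcal{A}(t-\tau)\subset\Phi(\tau+r,t-\tau-r)\mathcal{A}(t-\tau-r)$ lies in every weak neighborhood of $\mathcal{A}(t)$ for $r$ large. Your direct argument, using weak continuity plus the easy direction of your sequential characterization, is equally valid and does not use the attraction property at all.

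What the sequential formulation buys you is that only sequential weak continuity of $\Phi(\tau,t-\tau)$ is ever used. That is what one typically verifies for PDE solution maps. The paper's subnet step needs weak continuity in the topological sense. The price is the functional-analytic input you identified: angelicity of bounded sets in reflexive spaces, and, for the diagonal step, two facts. A separable reflexive subspace has separable dual, and weak convergence in it agrees with weak convergence in $L^{p}(\Omega,X)$ by Hahn--Banach.

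You can avoid the diagonal step altogether. The set $\{(\Phi(s,t-s)x,1/s):s\geq T,\ x\in K(t-s)\}$ is bounded in the reflexive space $L^{p}(\Omega,X)\times\mathbb{R}$, and $(y,0)$ lies in its weak closure. One application of Eberlein--\v{S}mulian then produces the $x_n$ and $s_n\to\infty$ at once.
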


\begin{proof}
Let $y\in\mathcal{A}(t)$. In view of characterization (\ref{AttrCharac}) and
arguing as in \cite[Lemma 3.3]{KMV03} there exist nets $s_{\alpha}$,
$y_{\alpha}\in\Phi(s_{\alpha},t-s_{\alpha})K(t-s_{\alpha})$ such that
$s_{\alpha}\rightarrow+\infty$ and $y_{\alpha}\rightarrow y$ weakly in
$L^{p}(\Omega,X)$. Hence, $y_{\alpha}=\Phi(s_{\alpha},t-s_{\alpha})z_{\alpha}$
for some net $z_{\alpha}\in K(t-s_{\alpha})$. Thus,
\[
y_{\alpha}=\Phi(s_{\alpha},t-s_{\alpha})z_{\alpha}=\Phi(\tau,t-\tau
)\Phi(s_{\alpha}-\tau,t-s_{\alpha})z_{\alpha}=\Phi(\tau,t-\tau)x_{\alpha}.
\]
There exists $s_{0}$ such that $x_{\alpha}\in K(t-\tau)$ for $s_{\alpha}\geq
s_{0}$. Since $K(t-\tau)$ is bounded, passing to a subnet we have that
$x_{\alpha}\rightarrow x$ for some $x$. The weak continuity of $\Phi$ implies
then that $y=\Phi(\tau,t-\tau)x$. Note that
\[
x_{\alpha}\in\Phi(s_{\alpha}-\tau,t-s_{\alpha})K(t-s_{\alpha})=\Phi
(\widetilde{s}_{\alpha},t-\tau-\widetilde{s}_{\alpha})K(t-\tau-\widetilde{s}%
_{\alpha}),
\]
so
\[
x\in\cap_{r\geq0}\overline{\cup_{s\geq r}\Phi(s,t-\tau-s)K(t-\tau-s)}%
^{w}=\mathcal{A}(t-\tau).
\]
We have obtained that $\mathcal{A}(t)\subset\Phi(\tau,t-\tau)\mathcal{A}%
(t-\tau).$

Conversely, for any weak neighborhood $\mathcal{N}^{w}(\mathcal{A}(t))$ we
have%
\begin{align*}
\Phi(\tau,t-\tau)\mathcal{A}(t-\tau)  &  \subset\Phi(\tau,t-\tau)\Phi
(r,t-\tau-r)\mathcal{A}(t-\tau-r)\\
&  =\Phi(\tau+r,t-\tau-r)\mathcal{A}(t-\tau-r)\subset\mathcal{N}%
^{w}(\mathcal{A}(t)),
\end{align*}
for $r$ large enough, so $\Phi(\tau,t-\tau)\mathcal{A}(t-\tau)\subset
\mathcal{A}(t)$.
\end{proof}

\bigskip

The map $\phi:\mathbb{R}\rightarrow L^{p}(\Omega,X)$ is a complete trajectory
if $\phi(t)=\Phi(t-s,s)\phi(s)$ for all $s<t$. We can characterize the
attractor in terms of complete trajectories belonging to $\mathcal{D}$.

\begin{lemma}
Assume that $\mathcal{D}$ is an inclusion-closed collection. Assume that
$\Phi$ possesses an invariant weak $\mathcal{D}$-pullback mean attractor
$\mathcal{A}\in\mathcal{D}$. Then%
\begin{equation}
\mathcal{A}(t)=\{\phi(t):\phi\in\mathcal{D}\text{ is a complete trajectory}\}.
\label{Charac}%
\end{equation}

\end{lemma}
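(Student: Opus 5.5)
We prove the two inclusions separately. Here ``$\phi\in\mathcal{D}$'' means that the family $\{\{\phi(t)\}\}_{t\in\mathbb{R}}$ of singletons belongs to $\mathcal{D}$.

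\emph{Inclusion $\supset$.} Let $\phi$ be a complete trajectory with $\{\{\phi(t)\}\}_{t\in\mathbb{R}}\in\mathcal{D}$, and fix $t\in\mathbb{R}$. For every $s\geq0$ the trajectory property gives
\[
\phi(t)=\Phi(s,t-s)\phi(t-s)\in\Phi(s,t-s)D(t-s),\qquad D(r)=\{\phi(r)\}.
\]
Since $\mathcal{A}$ is $\mathcal{D}$-pullback weakly attracting, for any weak neighborhood $\mathcal{N}^{w}(\mathcal{A}(t))$ we get $\phi(t)\in\mathcal{N}^{w}(\mathcal{A}(t))$ once $s\geq T$. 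So $\phi(t)$ lies in every weak neighborhood of $\mathcal{A}(t)$.

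$\mathcal{A}(t)$ is weakly compact, hence weakly closed, and the weak topology is Hausdorff and completely regular. Therefore the intersection of all weak neighborhoods of $\mathcal{A}(t)$ is $\mathcal{A}(t)$ itself. Concretely, if $y\notin\mathcal{A}(t)$, then for each $a\in\mathcal{A}(t)$ we can separate $y$ and $a$ by disjoint weakly open sets. Compactness then gives finitely many of these sets covering $\mathcal{A}(t)$, and their union is a weak neighborhood of $\mathcal{A}(t)$ missing a weak neighborhood of $y$. Hence $\phi(t)\in\mathcal{A}(t)$.

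\emph{Inclusion $\subset$.} Let $y\in\mathcal{A}(t)$; we build a complete trajectory through $y$ lying in $\mathcal{A}$.
\begin{itemize}
\item Set $\phi(t)=y$.
\item For $r>t$, put $\phi(r)=\Phi(r-t,t)y$. By invariance, $\phi(r)\in\mathcal{A}(r)$.
\item Backwards, invariance $\mathcal{A}(t)=\Phi(1,t-1)\mathcal{A}(t-1)$ gives $y_{1}\in\mathcal{A}(t-1)$ with $\Phi(1,t-1)y_{1}=y$. Inductively, choose $y_{k}\in\mathcal{A}(t-k)$ with $\Phi(1,t-k)y_{k}=y_{k-1}$, using the axiom of dependent choice.
\item For $r\in[t-k,t-k+1)$, define $\phi(r)=\Phi(r-t+k,t-k)y_{k}$. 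Invariance gives $\phi(r)\in\mathcal{A}(r)$.
\end{itemize}

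The cocycle property $\Phi(t+s,\tau)=\Phi(t,s+\tau)\circ\Phi(s,\tau)$ makes these pieces consistent. For any $k$ and $r\geq t-k$, one checks $\phi(r)=\Phi(r-t+k,t-k)y_{k}$. Then for $s<r$, choosing $k$ with $t-k\leq s$,
\[
\Phi(r-s,s)\phi(s)=\Phi(r-s,s)\Phi(s-t+k,t-k)y_{k}=\Phi(r-t+k,t-k)y_{k}=\phi(r).
\]
So $\phi$ is a complete trajectory. Since $\phi(r)\in\mathcal{A}(r)$ for all $r$ and $\mathcal{A}\in\mathcal{D}$, inclusion-closedness gives $\{\{\phi(r)\}\}_{r\in\mathbb{R}}\in\mathcal{D}$.

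The main points needing care are the bookkeeping of the cocycle identity across the gluing intervals and the separation argument that the intersection of weak neighborhoods of a weakly compact set is the set itself. Both are routine.
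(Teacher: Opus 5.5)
Your proposal is correct and follows essentially the same route as the paper. The inclusion $\supset$ comes from pullback attraction of the singleton family $\{\phi(r)\}$. The inclusion $\subset$ comes from repeated invariance with backward selection $y_k\in\mathcal{A}(t-k)$, glued via the cocycle property, followed by inclusion-closedness. The one addition is your explicit Hausdorff/compactness argument showing that a point lying in every weak neighborhood of the weakly compact set $\mathcal{A}(t)$ belongs to $\mathcal{A}(t)$; the paper leaves that step implicit.
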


\begin{proof}
If $\phi\in\mathcal{D}$ is a complete trajectory, then
\[
\phi(t)=\Phi(s,t-s,\phi(t-s))\text{ for all }s\geq0,\ t\in\mathbb{R}\text{, }%
\]
so for any $t\in\mathbb{R}$ and any weak neighborhood $\mathcal{N}%
^{w}(\mathcal{A}(t))$ there exists $T=T(t,\phi,\mathcal{N}^{w}(\mathcal{A}%
(t)))$ such that%
\[
\phi(t)=\Phi(s,t-s,\phi(t-s))\in\mathcal{N}^{w}(\mathcal{A}(t))\text{ for all
}s\geq T,
\]
which implies that $\phi(t)\in\mathcal{A}(t).$

Conversely, let $y\in\mathcal{A}(t)$, $t\in\mathbb{R}$ be arbitrary. Since
$\mathcal{A}$ is invariant, we have%
\[
y\in\mathcal{A}(t)=\Phi(1,t-1,\mathcal{A}(t-1)),
\]
so there exists $z_{1}\in\mathcal{A}(t-1)$ such that $y=\Phi(1,t-1,z_{1})$. We
put $\phi^{1}(r)=\Phi(r-t+1,t-1,z_{1})$ for all $r\geq t-1$. By the invariance
of the attractor, $\phi^{1}(r)\in\mathcal{A}(r)$ for all $r\geq t-1$. Also,
$\phi^{1}(r)=\Phi(r-s,s,\phi^{1}(s))$ for any $r\geq s\geq t-1$. By the same
argument, there is $z_{2}\in\mathcal{A}(t-2)$ such that $z_{1}=\Phi
(1,t-2,z_{2})$. The function $\phi^{2}(r)=\Phi(r-t+2,t-2,z_{2}),$ for all
$r\geq t-2,$ satisfies $\phi^{2}(r)\in\mathcal{A}(r),$ for all $r\geq t-2,$
$\phi^{2}(r)=\Phi(r-s,s,\phi^{2}(s)),$ for any $r\geq s\geq t-2,$ and
$\phi^{2}(r)=\phi^{1}(r)$ for all $r\geq t-1$. In this way, we construct a
sequence of functions $\phi^{k}:[t-k,+\infty)\rightarrow L^{p}(\Omega,X)$ such
that $\phi^{k}(r)\in\mathcal{A}(r),$ for all $r\geq t-k,$ $\phi^{k}%
(r)=\Phi(r-s,s,\phi^{k}(s)),$ for any $r\geq s\geq t-k,$ and $\phi^{k}%
(r)=\phi^{k-1}(r)$ for all $r\geq t-k+1$. Let $\phi:\mathbb{R}\rightarrow
L^{p}(\Omega,X)$ be the common value of the functions $\phi^{k}$ at any point
$t\in\mathbb{R}$. It is clear that $\phi$ is a complete trajectory satisfying
$\phi(t)\in\mathcal{A}(r)$ for all $t$. Also, $\phi\in\mathcal{D}$ because
$\mathcal{A}\in\mathcal{D}$ and $\mathcal{D}$ is inclusion closed.
\end{proof}

\bigskip

A\ family of sets $D$ is said to be backwards bounded if there is $t_{0}%
\in\mathbb{R}$ such that $\cup_{t\leq t_{0}}D(t)$ is bounded. It is called
bounded if $\cup_{t\in\mathbb{R}}D(t)$ is bounded.

\begin{lemma}
Assume that $\mathcal{D}$ is an inclusion-closed collection and that $\Phi$
possesses an invariant weak $\mathcal{D}$-pullback mean attractor
$\mathcal{A}\in\mathcal{D}$ which is backwards bounded. Then%
\begin{equation}
\mathcal{A}(t)=\{\phi(t):\phi\in\mathcal{D}\text{ is a backwards bounded
complete trajectory}\}. \label{charac2}%
\end{equation}
If, moreover, $\mathcal{D}$ contains any backwards bounded family, then%
\begin{equation}
\mathcal{A}(t)=\{\phi(t):\phi\text{ is a backwards bounded complete
trajectory}\}. \label{charac3}%
\end{equation}

\end{lemma}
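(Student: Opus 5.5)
The plan is to derive (\ref{charac2}) from the previous lemma, i.e. from characterization (\ref{Charac}), and then obtain (\ref{charac3}) as a consequence of (\ref{charac2}) together with the extra assumption on $\mathcal{D}$.

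\emph{Inclusion ``$\supset$'' in (\ref{charac2}).} A backwards bounded complete trajectory $\phi\in\mathcal{D}$ is in particular a complete trajectory in $\mathcal{D}$. By (\ref{Charac}) we immediately get $\phi(t)\in\mathcal{A}(t)$ for every $t$.

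\emph{Inclusion ``$\subset$'' in (\ref{charac2}).} Take $y\in\mathcal{A}(t)$ and construct, exactly as in the proof of the previous lemma, a complete trajectory $\phi$ with $\phi(t)=y$ and $\phi(r)\in\mathcal{A}(r)$ for all $r\in\mathbb{R}$; this uses the invariance of $\mathcal{A}$. The previous proof already shows $\phi\in\mathcal{D}$, since $\{\phi(r)\}\subset\mathcal{A}(r)$ and $\mathcal{D}$ is inclusion-closed (each singleton being a nonempty bounded set). The only new point is backwards boundedness. Let $t_0$ be such that $\cup_{r\leq t_0}\mathcal{A}(r)$ is bounded. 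Then $\cup_{r\leq t_0}\{\phi(r)\}\subset\cup_{r\leq t_0}\mathcal{A}(r)$ is bounded, so $\phi$ is backwards bounded. I will make explicit that the construction yields $\phi(r)\in\mathcal{A}(r)$ for every $r$, not only along the integer steps. This follows because each $\phi^k(r)=\Phi(r-t+k,t-k,z_k)$ with $z_k\in\mathcal{A}(t-k)$, so invariance gives $\phi^k(r)\in\mathcal{A}(r)$ for all $r\geq t-k$.

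\emph{Proof of (\ref{charac3}).} Assume in addition that $\mathcal{D}$ contains every backwards bounded family. Then any backwards bounded complete trajectory $\phi$, viewed as the family $\{\{\phi(r)\}\}_{r\in\mathbb{R}}$, belongs to $\mathcal{D}$. Hence the condition ``$\phi\in\mathcal{D}$'' in (\ref{charac2}) is automatic, and the two sets coincide, giving (\ref{charac3}).

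I expect no real obstacle. The only delicate step is checking carefully that the trajectory built in the previous lemma stays inside $\mathcal{A}(r)$ for all real $r$, since that is what transfers backwards boundedness from $\mathcal{A}$ to $\phi$.
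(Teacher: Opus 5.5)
Your proposal is correct and follows the paper's argument: (\ref{charac2}) comes from (\ref{Charac}) together with the backwards boundedness of $\mathcal{A}$, and (\ref{charac3}) holds because every backwards bounded complete trajectory then automatically belongs to $\mathcal{D}$. The step you flag as delicate is in fact immediate and needs no return to the construction. Applying (\ref{Charac}) at every time $r$ already gives $\phi(r)\in\mathcal{A}(r)$ for all $r$ and for every complete trajectory $\phi\in\mathcal{D}$.
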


\begin{proof}
The first statement follows directly from (\ref{Charac}) and the fact that
$\mathcal{A}$ is backwards bounded.

The second one follows because any backwards bounded complete trajectory has
to belong to $\mathcal{D}$, so the two sets defined in (\ref{charac2}) and
(\ref{charac3}) coincide.
\end{proof}

\bigskip

In the same way we prove the following.

\begin{lemma}
Assume that $\Phi$ possesses an invariant weak $\mathcal{D}$-pullback mean
attractor $\mathcal{A}\in\mathcal{D}$ which is bounded. Then%
\begin{equation}
\mathcal{A}(t)=\{\phi(t):\phi\in\mathcal{D}\text{ is a bounded complete
trajectory}\}. \label{charac4}%
\end{equation}
If, moreover, $\mathcal{D}$ contains any bounded family, then%
\begin{equation}
\mathcal{A}(t)=\{\phi(t):\phi\text{ is a bounded complete trajectory}\}.
\label{charac5}%
\end{equation}

\end{lemma}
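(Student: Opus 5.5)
The argument mirrors the proof of the previous lemma, now using boundedness of $\mathcal{A}$ instead of backwards boundedness. The statement does not assume $\mathcal{D}$ inclusion-closed, but (\ref{Charac}) needs it, so I will either add that assumption implicitly (as in the preceding lemmas) or rerun the construction of (\ref{Charac}) directly.

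For (\ref{charac4}), the inclusion "$\supset$" is the easy half of the proof of (\ref{Charac}). If $\phi\in\mathcal{D}$ is a complete trajectory, then $\phi(t)=\Phi(s,t-s,\phi(t-s))$ for all $s\geq0$. Since $\mathcal{A}$ is $\mathcal{D}$-pullback weakly attracting and $\phi\in\mathcal{D}$, $\phi(t)$ lies in every weak neighborhood of $\mathcal{A}(t)$. Because $\mathcal{A}(t)$ is weakly compact, hence weakly closed, this gives $\phi(t)\in\mathcal{A}(t)$. This direction uses neither boundedness of $\phi$ nor inclusion-closedness.

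For "$\subset$", take $y\in\mathcal{A}(t)$. Using invariance, repeat the backward construction from the proof of (\ref{Charac}): choose $z_k\in\mathcal{A}(t-k)$ with $z_{k-1}=\Phi(1,t-k,z_k)$ and glue the forward orbits. This produces a complete trajectory $\phi$ with $\phi(t)=y$ and $\phi(r)\in\mathcal{A}(r)$ for all $r\in\mathbb{R}$. Then $\cup_{r}\{\phi(r)\}\subset\cup_r\mathcal{A}(r)$, which is bounded by hypothesis, so $\phi$ is a bounded complete trajectory. Membership $\phi\in\mathcal{D}$ follows from $\{\phi(r)\}\subset\mathcal{A}(r)$ with $\mathcal{A}\in\mathcal{D}$ and inclusion-closedness. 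The main (minor) obstacle is exactly this point: without inclusion-closedness one cannot conclude $\phi\in\mathcal{D}$. I would state that the hypothesis is inherited from the preceding lemmas ("in the same way").

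For (\ref{charac5}), the set in (\ref{charac4}) is contained in the set in (\ref{charac5}) trivially. Conversely, any bounded complete trajectory $\phi$, viewed as the family $\{\{\phi(r)\}\}_{r\in\mathbb{R}}$, is a bounded family. It therefore belongs to $\mathcal{D}$ by the extra assumption, so the two sets coincide and (\ref{charac5}) follows from (\ref{charac4}).
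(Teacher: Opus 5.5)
Your proposal is correct and follows the paper's own route. The paper proves this lemma ``in the same way'' as the backwards-bounded version: the first identity comes from (\ref{Charac}) together with boundedness of $\mathcal{A}$, and the second from the observation that every bounded complete trajectory lies in $\mathcal{D}$. Your remark about inclusion-closedness is also right: it enters through (\ref{Charac}), and the statement as written silently inherits it from the preceding lemmas.
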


\subsection{Weak pullback mean attractors over filtered probability spaces}

In this subsection we recall the main results of the theory of weak
$\mathcal{D}$-pullback mean random attractors for mean random dynamical
systems over filtered probability spaces.

As usual, we denote by $(\Omega,\mathcal{F},\{\mathcal{F}_{t}\}_{t\in
\mathbb{R}},P)$ a complete filtered probability space, where $\{\mathcal{F}%
_{t}\}_{t\in\mathbb{R}}$ is an increasing right continuous family of
sub-$\sigma$-algebras of $\mathcal{F}$ which contains all $P$-null sets. For
$p\in(1,\infty)$ let $L^{p}(\Omega,\mathcal{F}_{t};X)$ be the subspace of
(class of) functions $f$ of $L^{p}(\Omega,\mathcal{F};X)$ such that $f$ is
strongly $\mathcal{F}_{t}$-measurable.

As before, we denote by $\mathcal{D}$ a collection of some families
$D=\{D(t)\}_{t\in\mathbb{R}}$ of non-empty bounded sets $D(t)\subset
L^{p}(\Omega,\mathcal{F}_{t};X)$:%
\[
\mathcal{D}=\{D=\{D(t)\}_{t\in\mathbb{R}}:D(t)\in\beta\left(  L^{p}%
(\Omega,\mathcal{F}_{t};X)\right)  \text{ satisfying suitable conditions}\}.
\]

A family $D=\{D(t)\}_{t\in\mathbb{R}}$ is said to be compact (weakly compact,
bounded, etc.) if each set $D(t)$ is compact (weakly compact, bounded, etc.)
in $L^{p}(\Omega,\mathcal{F}_{t};X)$.

\begin{definition}
\label{MRDS2}The family of maps $\Phi(t,\tau):L^{p}(\Omega,\mathcal{F}_{\tau
};X)\rightarrow L^{p}(\Omega,\mathcal{F}_{t+\tau};X)$, $t\in\mathbb{R}^{+}$,
$\tau\in\mathbb{R}$ is called a mean random dynamical system on $L^{p}%
(\Omega,\mathcal{F};X)$ over $(\Omega,\mathcal{F},\{\mathcal{F}_{t}%
\}_{t\in\mathbb{R}},P)$ if:

\begin{itemize}
\item $\Phi(0,\tau)$ is the identity map for all $\tau\in\mathbb{R};$

\item $\Phi(t+s,\tau)=\Phi(t.s+\tau)\circ\Phi(s,\tau)$ for all $t,s\in
\mathbb{R}^{+}$, $\tau\in\mathbb{R}$.
\end{itemize}
\end{definition}

\begin{definition}
The family $K=\{K(t)\}_{t\in\mathbb{R}}\in\mathcal{D}$ is called $\mathcal{D}%
$-pullback weakly attracting for $\Phi$ if for all $\tau\in\mathbb{R}$,
$D\in\mathcal{D}$ and any weak neighborhood $\mathcal{N}^{w}(K(\tau))$ of
$K(\tau)$ in $L^{p}(\Omega,\mathcal{F}_{\tau};X)$ there exists $T(\tau
,D,\mathcal{N}^{w}(K(\tau)))>0$ such that%
\[
\Phi(t,\tau-t)D(\tau-t)\subset\mathcal{N}^{w}(K(\tau))
\]
as soon as $t\geq T.$
\end{definition}

We introduce the main concept of weak $\mathcal{D}$-pullback mean random attractor.

\begin{definition}
A family $\mathcal{A}=\{\mathcal{A}(t)\}_{t\in\mathbb{R}}\in\mathcal{D}$ is
called a weak $\mathcal{D}$-pullback mean random attractor for $\Phi$ if:

\begin{enumerate}
\item $\mathcal{A}(\tau)$ is weakly compact in $L^{p}(\Omega,\mathcal{F}%
_{\tau};X)$ for all $\tau\in\mathbb{R}$.

\item $\mathcal{A}$ is $\mathcal{D}$-pullback weakly attracting.

\item $\mathcal{A}$ is minimal, that is, if $B\in\mathcal{D}$ satisfies
conditions 1-2, then $\mathcal{A}(t)\subset B(t)$ for all $t\in\mathbb{R}$.
\end{enumerate}
\end{definition}

It follows from this definition that a weak $\mathcal{D}$-pullback mean random
attractor is unique if it exists.

\begin{definition}
The family $K=\{K(t)\}_{t\in\mathbb{R}}\in\mathcal{D}$ is called $\mathcal{D}%
$-pullback absorbing for $\Phi$ if for all $\tau\in\mathbb{R}$ and
$D\in\mathcal{D}$ there exists $T(\tau,D)>0$ such that%
\[
\Phi(t,\tau-t)D(\tau-t)\subset K(\tau)
\]
as soon as $t\geq T.$
\end{definition}

\begin{theorem}
\label{AttrExist2}\cite[p.2188]{Wang} Let $X$ be reflexive. Assume that
$\mathcal{D}$ is an inclusion-closed collection. If the mean random dynamical
system $\Phi$ possesses a weakly compact $\mathcal{D}$-pullback absorbing
family $K=\{K(t)\}_{t\in\mathbb{R}}\in\mathcal{D}$, then $\Phi$ has a unique
weak $\mathcal{D}$-pullback mean random attractor given by%
\begin{equation}
\mathcal{A}(t)=\cap_{r\geq0}\overline{\cup_{s\geq r}\Phi(s,t-s)K(t-s)}%
^{w}\text{, }t\in\mathbb{R}\text{,} \label{AttrCharac2}%
\end{equation}
where the closure is taken with respect to the weak topology of $L^{p}%
(\Omega,\mathcal{F}_{t};X)$.
\end{theorem}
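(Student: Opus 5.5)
The plan is to mimic the standard construction of the omega-limit set, now carried out fibrewise in the spaces $L^{p}(\Omega,\mathcal{F}_{t};X)$. First I check that these spaces are reflexive. Since $X$ is reflexive and $1<p<\infty$, the space $L^{p}(\Omega,\mathcal{F}_{t};X)$ is reflexive. It is also a closed subspace of $L^{p}(\Omega,\mathcal{F};X)$, because strong $\mathcal{F}_{t}$-measurability is preserved under a.e. limits of subsequences. Consequently every bounded, weakly closed subset of it is weakly compact. In addition, the weak topology of the subspace coincides with the trace of the weak topology of the whole space, by Hahn--Banach. Now define $\mathcal{A}(t)$ by (\ref{AttrCharac2}). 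The maps send $\Phi(s,t-s)K(t-s)\subset L^{p}(\Omega,\mathcal{F}_{t};X)$ by Definition \ref{MRDS2}. Hence for $r\geq T(t,K)$ the union $\cup_{s\geq r}\Phi(s,t-s)K(t-s)$ lies in $K(t)$, which is weakly compact and therefore weakly closed. So $\mathcal{A}(t)$ is a decreasing intersection of nonempty weakly closed subsets of the weakly compact set $K(t)$. This gives at once that $\mathcal{A}(t)$ is nonempty, weakly compact and contained in $K(t)$. Since $\mathcal{D}$ is inclusion-closed and $K\in\mathcal{D}$, we get $\mathcal{A}\in\mathcal{D}$.

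Next I prove weak pullback attraction, arguing by contradiction. Suppose that for some $\tau$, some $D\in\mathcal{D}$ and some weak neighborhood $\mathcal{N}^{w}(\mathcal{A}(\tau))$ there are $t_{n}\to\infty$ and $y_{n}\in\Phi(t_{n},\tau-t_{n})D(\tau-t_{n})$ with $y_{n}\notin\mathcal{N}^{w}$. Using the cocycle property I write $y_{n}=\Phi(s,\tau-s)\Phi(t_{n}-s,\tau-t_{n})x_{n}$. For $t_{n}-s\geq T(\tau-s,D)$ the inner term lies in $K(\tau-s)$, so $y_{n}\in\cup_{s'\geq r}\Phi(s',\tau-s')K(\tau-s')$ for every fixed $r$ once $n$ is large. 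Moreover $y_{n}\in K(\tau)$ eventually, so by Eberlein--\v{S}mulian a subsequence converges weakly to some $y$. This $y$ belongs to each of the weak closures, hence $y\in\mathcal{A}(\tau)$. But $y_{n}$ eventually enters the weak neighborhood of $y$ contained in $\mathcal{N}^{w}$, which is a contradiction. I could alternatively argue with nets and the finite intersection property, avoiding sequences entirely, if metrizability subtleties arise.

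Finally I prove minimality. Let $B\in\mathcal{D}$ be weakly compact and weakly attracting, and take $y\in\mathcal{A}(t)$. For any weakly open $\mathcal{N}^{w}\supset B(t)$, applying attraction with $D=K$ gives $\Phi(s,t-s)K(t-s)\subset\mathcal{N}^{w}$ for $s\geq T$. Therefore $y$ lies in the weak closure of $\mathcal{N}^{w}$. Since $B(t)$ is weakly compact in a locally convex Hausdorff space, $B(t)=\cap\overline{\mathcal{N}^{w}}^{w}$, the intersection taken over weak neighborhoods; one can use the neighborhoods $B(t)+U$ with $U$ a convex weak neighborhood of $0$ and the regularity of the weak topology. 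Hence $y\in B(t)$. Uniqueness is then immediate from minimality.

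The main obstacle is the point already handled in the first paragraph: making sure that all the weak topologies and closures are taken consistently in the subspaces $L^{p}(\Omega,\mathcal{F}_{t};X)$. The remaining steps are routine weak-compactness arguments.
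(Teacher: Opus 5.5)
Your proof is correct. The paper does not prove this theorem itself; it imports it from Wang [p.~2188]. Your argument is the standard weak $\omega$-limit construction behind that result:
\begin{enumerate}
\item nonemptiness and weak compactness of $\mathcal{A}(t)$ via the finite intersection property inside $K(t)$;
\item attraction by contradiction, using the cocycle splitting and Eberlein--\v{S}mulian;
\item minimality by separating points from the weakly compact set $B(t)$ with weakly open neighbourhoods.
\end{enumerate}

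A minor remark: once $K(t)$ is assumed weakly compact, your argument never uses the reflexivity of $X$ or of $L^{p}(\Omega,\mathcal{F}_{t};X)$. Its real role is to ensure that bounded closed convex absorbing sets, such as the balls $K_{0}(t)$ of Lemma~\ref{Absorbing}, are weakly compact.
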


As before, the weak $\mathcal{D}$-pullback mean random attractor is called
invariant if
\[
\mathcal{A}(t)=\Phi(\tau,t-\tau,\mathcal{A}(t-\tau))\text{ for all }\tau
\geq0,\ t\in\mathbb{R}\text{. }%
\]
The mean random dynamical system on $L^{p}(\Omega,\mathcal{F};X)$ over
$(\Omega,\mathcal{F},\{\mathcal{F}_{t}\}_{t\in\mathbb{R}},P)$ $\Phi$ is weakly
continuous if the map $\Phi(t,\tau):L^{p}(\Omega,\mathcal{F}_{\tau
};X)\rightarrow L^{p}(\Omega,\mathcal{F}_{t+\tau};X)$ is weakly continuous for
any $t\geq0$, $\tau\in\mathbb{R}$.

\begin{lemma}
Let the conditions of Theorem \ref{AttrExist2} hold true. Assume that $\Phi$
is weakly continuous. Then the weak $\mathcal{D}$-pullback mean attractor
$\mathcal{A}$ is invariant.
\end{lemma}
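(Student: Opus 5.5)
The plan is to repeat the proof of the analogous lemma in the previous subsection, now inside the filtered setting. The one new point is that each $\mathcal{A}(t)$ lives in the closed subspace $L^{p}(\Omega,\mathcal{F}_{t};X)$. Throughout I use the characterization (\ref{AttrCharac2}) from Theorem \ref{AttrExist2}. I also use that $L^{p}(\Omega,\mathcal{F}_{t};X)$ is reflexive, since $X$ is reflexive and $p\in(1,\infty)$. Hence bounded sets there are weakly relatively compact, and this also holds for nets via subnets. Finally, weak topologies are compatible: a closed subspace $L^{p}(\Omega,\mathcal{F}_{s};X)\subset L^{p}(\Omega,\mathcal{F}_{t};X)$ with $s\le t$ carries the restricted weak topology, by Hahn--Banach.

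\emph{Inclusion $\mathcal{A}(t)\subset\Phi(\tau,t-\tau)\mathcal{A}(t-\tau)$.} Fix $y\in\mathcal{A}(t)$ and $\tau\ge0$.
\begin{itemize}
\item From (\ref{AttrCharac2}), argue as in \cite[Lemma 3.3]{KMV03} to get nets $s_{\alpha}\to+\infty$ and $z_{\alpha}\in K(t-s_{\alpha})$ such that $y_{\alpha}=\Phi(s_{\alpha},t-s_{\alpha})z_{\alpha}\to y$ weakly in $L^{p}(\Omega,\mathcal{F}_{t};X)$.
\item By the cocycle property in Definition \ref{MRDS2}, $y_{\alpha}=\Phi(\tau,t-\tau)x_{\alpha}$, where $x_{\alpha}=\Phi(s_{\alpha}-\tau,t-s_{\alpha})z_{\alpha}\in L^{p}(\Omega,\mathcal{F}_{t-\tau};X)$.
\item Since $K$ is $\mathcal{D}$-pullback absorbing and $K\in\mathcal{D}$, we have $x_{\alpha}\in K(t-\tau)$ once $s_{\alpha}-\tau$ is large. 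This set is weakly compact in $L^{p}(\Omega,\mathcal{F}_{t-\tau};X)$, so a subnet converges weakly there to some $x$.
\item Weak continuity of $\Phi(\tau,t-\tau)$, as a map into $L^{p}(\Omega,\mathcal{F}_{t};X)$, gives $y_{\alpha}\to\Phi(\tau,t-\tau)x$ weakly. Uniqueness of weak limits then gives $y=\Phi(\tau,t-\tau)x$.
\item Write $\tilde s_{\alpha}=s_{\alpha}-\tau$. Then $x_{\alpha}\in\Phi(\tilde s_{\alpha},t-\tau-\tilde s_{\alpha})K(t-\tau-\tilde s_{\alpha})$ with $\tilde s_{\alpha}\to\infty$. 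So $x$ lies in $\cap_{r\ge0}\overline{\cup_{s\ge r}\Phi(s,t-\tau-s)K(t-\tau-s)}^{w}=\mathcal{A}(t-\tau)$, the closure being taken in $L^{p}(\Omega,\mathcal{F}_{t-\tau};X)$.
\end{itemize}

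\emph{Inclusion $\Phi(\tau,t-\tau)\mathcal{A}(t-\tau)\subset\mathcal{A}(t)$.} First apply the inclusion just proved at time $t-\tau$ with step $r$. This gives $\mathcal{A}(t-\tau)\subset\Phi(r,t-\tau-r)\mathcal{A}(t-\tau-r)$, and so
\[
\Phi(\tau,t-\tau)\mathcal{A}(t-\tau)\subset\Phi(\tau+r,t-\tau-r)\mathcal{A}(t-\tau-r).
\]
Since $\mathcal{A}\in\mathcal{D}$ is $\mathcal{D}$-pullback weakly attracting, the right-hand side lies in any prescribed weak neighborhood $\mathcal{N}^{w}(\mathcal{A}(t))$ in $L^{p}(\Omega,\mathcal{F}_{t};X)$ for $r$ large. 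Hence every point of $\Phi(\tau,t-\tau)\mathcal{A}(t-\tau)$ lies in the intersection of all weak neighborhoods of $\mathcal{A}(t)$. That intersection equals $\mathcal{A}(t)$, because $\mathcal{A}(t)$ is weakly compact and hence weakly closed, and the weak topology is Hausdorff and completely regular.

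\emph{Main obstacle.} The delicate step is the first bullet of the first inclusion: extracting nets with $s_{\alpha}\to\infty$ from the closure formula (\ref{AttrCharac2}). This is the content of \cite[Lemma 3.3]{KMV03}, and I would index the nets by pairs (weak neighborhood of $y$, $r$). A second point to check is that all weak convergences take place in the correct $\sigma$-algebra space. This is handled by the closed-subspace remark above, together with the fact that $\Phi(s,t-s)$ maps into $L^{p}(\Omega,\mathcal{F}_{t};X)$.
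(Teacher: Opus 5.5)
Your proposal is correct and follows the paper's proof step by step. It uses the same net extraction from (\ref{AttrCharac2}), the same cocycle splitting $y_\alpha=\Phi(\tau,t-\tau)x_\alpha$ with $x_\alpha$ eventually absorbed into $K(t-\tau)$, weak continuity to get $y=\Phi(\tau,t-\tau)x$ with $x\in\mathcal{A}(t-\tau)$, and pullback attraction of $\mathcal{A}$ itself for the reverse inclusion; you also make explicit two points the paper leaves implicit, namely the use of the first inclusion to write $\mathcal{A}(t-\tau)\subset\Phi(r,t-\tau-r)\mathcal{A}(t-\tau-r)$ and the final step that the intersection of all weak neighborhoods of $\mathcal{A}(t)$ is $\mathcal{A}(t)$.
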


\begin{proof}
Let $y\in\mathcal{A}(t)$. In view of characterization (\ref{AttrCharac2}) and
arguing as in \cite[Lemma 3.3]{KMV03} there exist nets $s_{\alpha}$,
$y_{\alpha}\in\Phi(s_{\alpha},t-s_{\alpha})K(t-s_{\alpha})$ such that
$s_{\alpha}\rightarrow+\infty$ and $y_{\alpha}\rightarrow y$ weakly in
$L^{p}(\Omega,\mathcal{F}_{t};X)$. Hence, $y_{\alpha}=\Phi(s_{\alpha
},t-s_{\alpha})z_{\alpha}$ for some net $z_{\alpha}\in K(t-s_{\alpha})$.
Thus,
\[
y_{\alpha}=\Phi(s_{\alpha},t-s_{\alpha})z_{\alpha}=\Phi(\tau,t-\tau
)\Phi(s_{\alpha}-\tau,t-s_{\alpha})z_{\alpha}=\Phi(\tau,t-\tau)x_{\alpha}.
\]
There exists $s_{0}$ such that $x_{\alpha}\in K(t-\tau)$ for $s_{\alpha}\geq
s_{0}$. Since $K(t-\tau)$ is bounded, passing to a subnet we have that
$x_{\alpha}\rightarrow x$ for some $x$. The weak continuity of $\Phi$ implies
then that $y=\Phi(\tau,t-\tau)x$. Note that
\[
x_{\alpha}\in\Phi(s_{\alpha}-\tau,t-s_{\alpha})K(t-s_{\alpha})=\Phi
(\widetilde{s}_{\alpha},t-\tau-\widetilde{s}_{\alpha})K(t-\tau-\widetilde{s}%
_{\alpha}),
\]
so
\[
x\in\cap_{r\geq0}\overline{\cup_{s\geq r}\Phi(s,t-\tau-s)K(t-\tau-s)}%
^{w}=\mathcal{A}(t-\tau).
\]
We have obtained that $\mathcal{A}(t)\subset\Phi(\tau,t-\tau)\mathcal{A}%
(t-\tau).$

Conversely, for any weak neighborhood $\mathcal{N}^{w}(\mathcal{A}(t))$ we
have%
\begin{align*}
\Phi(\tau,t-\tau)\mathcal{A}(t-\tau)  &  \subset\Phi(\tau,t-\tau)\Phi
(r,t-\tau-r)\mathcal{A}(t-\tau-r)\\
&  =\Phi(\tau+r,t-\tau-r)\mathcal{A}(t-\tau-r)\subset\mathcal{N}%
^{w}(\mathcal{A}(t)),
\end{align*}

\end{proof}

\bigskip

A map $\phi$ such that $\phi(t)\in L^{p}(\Omega,\mathcal{F}_{t};X)$ for all
$t\in\mathbb{R}$ is a complete trajectory if $\phi(t)=\Phi(t-s,s)\phi(s)$ for
all $s<t$. We can characterize the attractor in terms of complete trajectories
belonging to $\mathcal{D}$.

\begin{lemma}
Assume that $\mathcal{D}$ is an inclusion-closed collection. Assume that
$\Phi$ possesses an invariant weak $\mathcal{D}$-pullback mean attractor
$\mathcal{A}\in\mathcal{D}$. Then%
\begin{equation}
\mathcal{A}(t)=\{\phi(t):\phi\in\mathcal{D}\text{ is a complete trajectory}\}.
\label{Charac2}%
\end{equation}

\end{lemma}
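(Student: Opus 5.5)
The plan is to repeat the argument used for the analogous lemma over a probability space, (\ref{Charac}). The only additional care needed is measurability: we must check that the constructed trajectory satisfies $\phi(t)\in L^{p}(\Omega,\mathcal{F}_{t};X)$. The proof is a double inclusion.

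\emph{Inclusion of the trajectory set in $\mathcal{A}(t)$.} Let $\phi\in\mathcal{D}$ be a complete trajectory and fix $t\in\mathbb{R}$. For every $s\geq0$ we have $\phi(t)=\Phi(s,t-s)\phi(t-s)$ with $\phi(t-s)\in D(t-s)$, where $D=\{\{\phi(r)\}\}_{r\in\mathbb{R}}\in\mathcal{D}$. The attractor $\mathcal{A}$ is $\mathcal{D}$-pullback weakly attracting. Hence for every weak neighborhood $\mathcal{N}^{w}(\mathcal{A}(t))$ in $L^{p}(\Omega,\mathcal{F}_{t};X)$ we get $\phi(t)\in\mathcal{N}^{w}(\mathcal{A}(t))$ once $s$ is large. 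Since $\mathcal{A}(t)$ is weakly compact, it is weakly closed. The weak topology is Hausdorff, and a weakly compact set is the intersection of its weak neighborhoods, so $\phi(t)\in\mathcal{A}(t)$.

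\emph{Reverse inclusion.} Fix $y\in\mathcal{A}(t)$. By invariance, $\mathcal{A}(t)=\Phi(1,t-1)\mathcal{A}(t-1)$, so we can choose $z_{1}\in\mathcal{A}(t-1)$ with $y=\Phi(1,t-1)z_{1}$. Inductively, we choose $z_{k}\in\mathcal{A}(t-k)$ with $z_{k-1}=\Phi(1,t-k)z_{k}$. Then we set $\phi^{k}(r)=\Phi(r-t+k,t-k)z_{k}$ for $r\geq t-k$.
\begin{itemize}
\item By the cocycle property in Definition \ref{MRDS2}, $\phi^{k}$ agrees with $\phi^{k-1}$ on $[t-k+1,\infty)$.
\item By invariance, $\phi^{k}(r)\in\mathcal{A}(r)$ for all $r\geq t-k$.
\item $\phi^{k}(r)=\Phi(r-s,s)\phi^{k}(s)$ for all $r\geq s\geq t-k$.
\end{itemize}
Define $\phi(r)$ as the common value of the $\phi^{k}(r)$; it gives a complete trajectory with $\phi(t)=y$.

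The main point to check, and essentially the only difference from the earlier lemma, is that $\phi(r)\in L^{p}(\Omega,\mathcal{F}_{r};X)$ for every $r$. This follows because $z_{k}\in\mathcal{A}(t-k)\subset L^{p}(\Omega,\mathcal{F}_{t-k};X)$, and by Definition \ref{MRDS2} the map $\Phi(r-t+k,t-k)$ sends $L^{p}(\Omega,\mathcal{F}_{t-k};X)$ into $L^{p}(\Omega,\mathcal{F}_{r};X)$. Finally, $\phi(r)\in\mathcal{A}(r)$ for all $r$ and $\mathcal{A}\in\mathcal{D}$. Since $\mathcal{D}$ is inclusion-closed, the family $\{\{\phi(r)\}\}_{r\in\mathbb{R}}$ belongs to $\mathcal{D}$. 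This proves (\ref{Charac2}).
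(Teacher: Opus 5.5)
Your proposal is correct and follows essentially the same route as the paper. You use pullback weak attraction of the singleton family $\{\{\phi(r)\}\}_{r\in\mathbb{R}}\in\mathcal{D}$ to get $\phi(t)\in\mathcal{A}(t)$; for the converse, invariance lets you choose $z_k\in\mathcal{A}(t-k)$ backwards and glue the orbits $\phi^k$ into a complete trajectory inside $\mathcal{A}$, which lies in $\mathcal{D}$ by inclusion-closedness. Your indexing ($z_{k-1}=\Phi(1,t-k)z_k$, with $\phi^k$ built from $z_k$) is in fact cleaner than the paper's second step, which contains typos.
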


\begin{proof}
If $\phi\in\mathcal{D}$ is a complete trajectory, then
\[
\phi(t)=\Phi(s,t-s,\phi(t-s))\text{ for all }s\geq0,\ t\in\mathbb{R}\text{, }%
\]
so for any $t\in\mathbb{R}$ and any weak neighborhood $\mathcal{N}%
^{w}(\mathcal{A}(t))$ there exists $T=T(t,\phi,\mathcal{N}^{w}(\mathcal{A}%
(t)))$ such that%
\[
\phi(t)=\Phi(s,t-s,\phi(t-s))\in\mathcal{N}^{w}(\mathcal{A}(t))\text{ for all
}s\geq T,
\]
which implies that $\phi(t)\in\mathcal{A}(t).$

Conversely, let $y\in\mathcal{A}(t)$, $t\in\mathbb{R}$ be arbitrary. Since
$\mathcal{A}$ is invariant, we have%
\[
y\in\mathcal{A}(t)=\Phi(1,t-1,\mathcal{A}(t-1)),
\]
so there exists $z_{1}\in\mathcal{A}(t-1)$ such that $y=\Phi(1,t-1,z_{1})$. We
put $\phi^{1}(r)=\Phi(r-t+1,t-1,z_{1})$ for all $r\geq t-1$. By the invariance
of the attractor, $\phi^{1}(r)\in\mathcal{A}(r)$ for all $r\geq t-1$. Also,
$\phi^{1}(r)=\Phi(r-s,s,\phi^{1}(s))$ for any $r\geq s\geq t-1$. By the same
argument, there is $z_{2}\in\mathcal{A}(t-2)$ such that $z_{1}=\Phi
(t-1,t-2,z_{2})$. The function $\phi^{2}(r)=\Phi(r-t+2,t-2,z_{1}),$ for all
$r\geq t-2,$ satisfies $\phi^{2}(r)\in\mathcal{A}(r),$ for all $r\geq t-2,$
$\phi^{2}(r)=\Phi(r-s,s,\phi^{2}(s)),$ for any $r\geq s\geq t-2,$ and
$\phi^{2}(r)=\phi^{1}(r)$ for all $r\geq t-1$. In this way, we construct a
sequence of functions $\phi^{k}$ such that $\phi^{k}(r)\in L^{p}%
(\Omega,\mathcal{F}_{r};X)$, $\phi^{k}(r)\in\mathcal{A}(r),$ for all $r\geq
t-k,$ $\phi^{k}(r)=\Phi(r-s,s,\phi^{k}(s)),$ for any $r\geq s\geq t-k,$ and
$\phi^{k}(r)=\phi^{k-1}(r)$ for all $r\geq t-k+1$. Let $\phi$ be the common
value of the functions $\phi^{k}$ at any point $t\in\mathbb{R}$. It is clear
that $\phi$ is a complete trajectory satisfying $\phi(t)\in\mathcal{A}(r)$ for
all $t$. Also, $\phi\in\mathcal{D}$ because $\mathcal{A}\in\mathcal{D}$ and
$\mathcal{D}$ is inclusion closed.
\end{proof}

\bigskip

\begin{lemma}
Assume that $\mathcal{D}$ is an inclusion-closed collection and that $\Phi$
possesses an invariant weak $\mathcal{D}$-pullback mean attractor
$\mathcal{A}\in\mathcal{D}$ which is backwards bounded. Then%
\begin{equation}
\mathcal{A}(t)=\{\phi(t):\phi\in\mathcal{D}\text{ is a backwards bounded
complete trajectory}\}. \label{charac2b}%
\end{equation}
If, moreover, $\mathcal{D}$ contains any backwards bounded family, then%
\begin{equation}
\mathcal{A}(t)=\{\phi(t):\phi\text{ is a backwards bounded complete
trajectory}\}. \label{charac3b}%
\end{equation}

\end{lemma}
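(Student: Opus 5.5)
The plan is to derive this directly from the characterization (\ref{Charac2}) established in the previous lemma, exactly as in the probability-space version. Under the present hypotheses $\mathcal{D}$ is inclusion-closed and $\mathcal{A}\in\mathcal{D}$ is invariant. So (\ref{Charac2}) gives $\mathcal{A}(t)=\{\phi(t):\phi\in\mathcal{D}\text{ is a complete trajectory}\}$, and it only remains to add or remove the backwards boundedness condition.

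For (\ref{charac2b}), the inclusion ``$\supset$'' is immediate: a backwards bounded complete trajectory in $\mathcal{D}$ is in particular a complete trajectory in $\mathcal{D}$, so its values lie in $\mathcal{A}(t)$ by (\ref{Charac2}). For ``$\subset$'', take $y\in\mathcal{A}(t)$. I would rerun the construction in the proof of (\ref{Charac2}), which produces a complete trajectory $\phi$ with $\phi(t)=y$ and $\phi(r)\in\mathcal{A}(r)$ for all $r\in\mathbb{R}$. Since $\mathcal{A}$ is backwards bounded, there is $t_{0}$ with $\cup_{r\leq t_{0}}\mathcal{A}(r)$ bounded in $L^{p}(\Omega,\mathcal{F};X)$. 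Then $\cup_{r\leq t_{0}}\{\phi(r)\}$ is bounded, so $\phi$ is backwards bounded. Finally, $\phi\in\mathcal{D}$ because $\{\phi(r)\}\subset\mathcal{A}(r)$ and $\mathcal{D}$ is inclusion-closed. The only point needing care is that boundedness is measured in $L^{p}(\Omega,\mathcal{F};X)$, in which all the spaces $L^{p}(\Omega,\mathcal{F}_{r};X)$ are embedded isometrically as closed subspaces. Hence a union over different $r$ makes sense, and the boundedness of $\phi$ follows from that of $\mathcal{A}$ without further work.

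For (\ref{charac3b}), assume in addition that $\mathcal{D}$ contains every backwards bounded family. Any backwards bounded complete trajectory $\phi$ (with $\phi(r)\in L^{p}(\Omega,\mathcal{F}_{r};X)$) defines the backwards bounded family $\{\{\phi(r)\}\}_{r\in\mathbb{R}}$, which therefore belongs to $\mathcal{D}$. Thus the sets on the right-hand sides of (\ref{charac2b}) and (\ref{charac3b}) coincide, and (\ref{charac3b}) follows from (\ref{charac2b}).

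I do not expect a genuine obstacle; this is bookkeeping on top of (\ref{Charac2}). The only subtlety worth stating explicitly is that the trajectory constructed in the previous proof actually stays in $\mathcal{A}(r)$ for every $r$, not just at the time $t$. This is what transfers the backwards boundedness of $\mathcal{A}$ to $\phi$, and it follows from invariance in the same way as in that construction.
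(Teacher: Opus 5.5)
Your proposal is correct and follows the paper's proof. Both derive (\ref{charac2b}) from (\ref{Charac2}) plus the backwards boundedness of $\mathcal{A}$, and both get (\ref{charac3b}) by noting that every backwards bounded complete trajectory then belongs to $\mathcal{D}$. You rerun the construction to get $\phi(r)\in\mathcal{A}(r)$ for all $r$, but this is unnecessary: the inclusion $\supset$ in (\ref{Charac2}), applied at each time $r$, already gives it for every complete trajectory $\phi\in\mathcal{D}$, which is what the paper means by ``follows directly from (\ref{Charac2}).''
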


\begin{proof}
The first statement follows directly from (\ref{charac2}) and the fact that
$\mathcal{A}$ is backwards bounded.

The second one follows because any backwards bounded complete trajectory has
to belong to $\mathcal{D}$, so the two sets defined in (\ref{charac2b}) and
(\ref{charac3b}) coincide.
\end{proof}

\bigskip

In the same way we prove the following.

\begin{lemma}
Assume that $\Phi$ possesses an invariant weak $\mathcal{D}$-pullback mean
attractor $\mathcal{A}\in\mathcal{D}$ which is bounded. Then%
\[
\mathcal{A}(t)=\{\phi(t):\phi\in\mathcal{D}\text{ is a bounded complete
trajectory}\}.
\]
If, moreover, $\mathcal{D}$ contains any bounded family, then%
\[
\mathcal{A}(t)=\{\phi(t):\phi\text{ is a bounded complete trajectory}\}.
\]

\end{lemma}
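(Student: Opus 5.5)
The plan is to argue exactly as in the backwards bounded case, starting from the characterization (\ref{Charac2}) of the previous lemma. There is one subtlety. The present statement does not assume that $\mathcal{D}$ is inclusion-closed, and (\ref{Charac2}) used this only to guarantee that the complete trajectory built through a point of $\mathcal{A}(t)$ lies in $\mathcal{D}$. So I will repeat that construction and check membership in $\mathcal{D}$ directly, or accept inclusion-closedness as implicitly inherited from the preceding lemmas, which is how I read ``in the same way''.

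For the inclusion ``$\supset$'' in the first formula, let $\phi\in\mathcal{D}$ be a bounded complete trajectory. For every $s\geq0$ we have $\phi(t)=\Phi(s,t-s)\phi(t-s)$. Since $\phi\in\mathcal{D}$ and $\mathcal{A}$ is $\mathcal{D}$-pullback weakly attracting, $\phi(t)$ lies in every weak neighborhood of $\mathcal{A}(t)$. Because $\mathcal{A}(t)$ is weakly compact, hence weakly closed, it follows that $\phi(t)\in\mathcal{A}(t)$. This step needs neither boundedness nor inclusion-closedness.

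For ``$\subset$'', fix $y\in\mathcal{A}(t)$. Using invariance $\mathcal{A}(t)=\Phi(1,t-1)\mathcal{A}(t-1)$ repeatedly, pick $z_k\in\mathcal{A}(t-k)$ with $z_{k-1}=\Phi(1,t-k)z_k$, and glue the forward orbits of the $z_k$ into a complete trajectory $\phi$ with $\phi(r)\in\mathcal{A}(r)$ for all $r$, exactly as in the proof of (\ref{Charac2}). Then $\cup_r\{\phi(r)\}\subset\cup_r\mathcal{A}(r)$, which is bounded by hypothesis, so $\phi$ is a bounded complete trajectory. Moreover the family $\{\{\phi(r)\}\}_{r}$ is contained in $\mathcal{A}\in\mathcal{D}$, so $\phi\in\mathcal{D}$ by inclusion-closedness. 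This gives the first formula.

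For the second formula, assume $\mathcal{D}$ contains every bounded family. Then every bounded complete trajectory is automatically in $\mathcal{D}$, so the two sets coincide, just as in the backwards bounded lemma. In this case inclusion-closedness is not even needed, since the trajectory constructed above is a bounded family and therefore lies in $\mathcal{D}$ directly. The only real obstacle is the membership $\phi\in\mathcal{D}$ in the first formula when $\mathcal{D}$ is not assumed inclusion-closed. I would resolve it by stating that assumption explicitly, as in the preceding lemmas.
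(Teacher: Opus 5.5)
Your proof is correct and follows the paper's route. The paper proves this lemma ``in the same way'' as the backwards-bounded case: the first identity follows from the complete-trajectory characterization (\ref{Charac2}) together with the boundedness of $\mathcal{A}$, and the second holds because every bounded complete trajectory then belongs to $\mathcal{D}$. You are also right that (\ref{Charac2}) assumes $\mathcal{D}$ is inclusion-closed, so that hypothesis, which the statement omits, is implicitly needed for the first identity; as you observe, it is not needed for the second.
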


\section{The mean random attractor for a non-local problem with random initial
data\label{AttractorRandom}}

We put $H=L^{2}\left(  \mathcal{O}\right)  $ with norm $\left\Vert
\text{\textperiodcentered}\right\Vert $ (we will use $\left\Vert
\text{\textperiodcentered}\right\Vert $ also for the norm in $\left(
L^{2}\left(  \mathcal{O}\right)  \right)  ^{d}$, $d\geq1$) and $V=H_{0}%
^{1}\left(  \mathcal{O}\right)  $ with norm $\left\Vert u\right\Vert
_{V}=\left\Vert \nabla u\right\Vert .$ As usual, $\left(
\text{\textperiodcentered},\text{\textperiodcentered}\right)  $ is the scalar
product in $H^{d},\ d\geq1$, and also the duality between $L^{q}(\mathcal{O})$
and $L^{p}(\mathcal{O})$, where $p\geq2$ and $q$ is its conjugate, that is,
$1/p+1/q=1$. The duality between $V$ and its dual space $V^{\ast}$ is denoted
by $\left\langle \text{\textperiodcentered,\textperiodcentered}\right\rangle
_{V^{\ast},V}$.

Let us consider the following reaction-diffusion equation%
\begin{equation}
\left\{
\begin{array}
[c]{l}%
\dfrac{\partial u}{\partial t}-a(\Vert u\Vert_{V}^{2})\Delta
u=f(u)+h(t,x)\ \text{in }(\tau,\infty)\times\mathcal{O},\\
u=0\quad\text{on }(\tau,\infty)\times\partial\mathcal{O},\\
u(\tau,x)=u_{\tau}(x)\quad\text{for }x\in\mathcal{O},
\end{array}
\right.  \label{RD}%
\end{equation}
where $\mathcal{O}$ is a bounded open set of $\mathbb{R}^{n}$ with smooth
boundary $\partial\mathcal{O}$ and the functions $h$, $a\in C(\mathbb{R}^{+}%
)$, $f\in C^{1}(\mathbb{R})$, satisfy the following assumptions:%
\begin{equation}
f(r)r\leq-\alpha\left\vert r\right\vert ^{p}+\beta,\label{Diss}%
\end{equation}%
\begin{equation}
\left\vert f(r)\right\vert \leq\gamma\left\vert r\right\vert ^{p-1}%
+\delta,\label{Growth}%
\end{equation}%
\begin{equation}
f^{\prime}(r)\leq\eta,\label{Deriv}%
\end{equation}%
\begin{equation}
h\in L_{loc}^{2}(\mathbb{R},H),\label{h1}%
\end{equation}%
\begin{equation}
0<m\leq a\left(  s\right)  \leq M,\label{Cond1a}%
\end{equation}%
\begin{equation}
s\mapsto a(s^{2})s\text{ is non-decreasing,}\label{Cond2a}%
\end{equation}
where $p\geq2$, $r\in\mathbb{R}$, $s\geq0$ and $\alpha,\beta,\gamma
,\delta,\eta>0.$

\begin{remark}
\label{f}Without loss of generality we can assume that $f(0)=0$, as defining
$\overline{f}(u)=f(u)-f(0)$, $\overline{h}(t)=h(t)+f(0)$ we obtain the
equivalent equation%
\[
\dfrac{\partial u}{\partial t}-a(\Vert u\Vert_{V}^{2})\Delta u=\overline
{f}(u)+\overline{h}(t,x)\ \text{in }(\tau,\infty)\times\mathcal{O},
\]
for which (\ref{Diss})-(\ref{Cond2a}) hold and, additionally, $\overline
{f}(0)=0.$
\end{remark}

We will study the existence of solutions of problem (\ref{RD}) for random
initial data $u_{\tau}\in L^{2}(\Omega,H)$ in a probability space
$(\Omega,\mathcal{F},P)$.

For the operator $A=-\Delta$, thanks to the assumptions on the domain $\Omega
$, it is well known that $D(A)=H^{2}(\mathcal{O})\cap H_{0}^{1}(\mathcal{O})$
\cite[Proposition 6.19]{robinson}.

\begin{definition}
\label{defsol}Let be given $\tau\in\mathbb{R}$ and $u_{\tau}\in L^{2}%
(\Omega,H)$. A continuous mapping $u:[\tau,\infty)\rightarrow L^{2}(\Omega,H)$
is called a regular solution of problem (\ref{RD}) if
\[
u\in C([\tau,\infty),L^{2}(\Omega,H))\cap L_{loc}^{2}(\tau,\infty;L^{2}%
(\Omega,V))\cap L_{loc}^{p}(\tau,\infty;L^{p}(\Omega,L^{p}(\mathcal{O}))),
\]%
\[
u\in L^{\infty}(\tau+\varepsilon,T;L^{2}(\Omega,V)\cap L^{2}(\tau
+\varepsilon,T;L^{2}(\Omega,D(A)),\text{ }\forall\ 0<\varepsilon<T<\infty,
\]
and $u$ satisfies, $P$-a.s., the equality%
\begin{equation}%
\begin{split}
&  (u(t),\xi)+\int_{\tau}^{t}a(\Vert u(s)\Vert_{H_{0}^{1}}^{2})(\nabla
u(s),\nabla\xi)ds\\
=(u_{0}, &  \xi)+\int_{\tau}^{t}\int_{\mathcal{O}}f(u(s,x))\ \xi
(x)dxds+\int_{\tau}^{t}\int_{\mathcal{O}}h(s,x)\xi(x)dxds,
\end{split}
\label{rmequa}%
\end{equation}
for every $t>\tau$ and $\xi\in V\cap L^{p}(\mathcal{O})$.
\end{definition}

\begin{theorem}
Suppose that (\ref{Diss})-(\ref{Cond2a}) hold true. Then for every $\tau
\in\mathbb{R}$ and $u_{\tau}\in L^{2}(\Omega,H)$, problem (\ref{RD}) has a
unique regular solution $u(\cdot)$, which is continuous with respect to the
initial datum $u_{\tau}$ in $L^{2}(\Omega,H)$. Moreover, it satisfies the
energy equality%
\begin{equation}
\frac{d}{dt}\mathbb{E}(\left\Vert u(t)\right\Vert ^{2})+2\mathbb{E}%
(a(\left\Vert u(t)\right\Vert _{V}^{2})\left\Vert u(t)\right\Vert _{V}%
^{2})=2\mathbb{E}(\left(  f(u(t),u(t)\right)  +(h(t),u(t))), \label{Energy}%
\end{equation}
for a.a. $t>\tau.$
\end{theorem}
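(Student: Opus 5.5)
The plan is to reduce the problem to the deterministic theory applied $\omega$ by $\omega$, and then lift the results to $L^{2}(\Omega,H)$.

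\textbf{Step 1: deterministic well-posedness.} For a fixed deterministic datum $v\in H$, I would use the known existence and uniqueness theory for (\ref{RD}) with $a(\Vert u\Vert_V^2)$ satisfying (\ref{Cond1a})--(\ref{Cond2a}), as in the papers \cite{CCMRV}, \cite{MorVal}. Existence comes from Faedo--Galerkin approximations. Uniqueness uses the monotonicity of $u\mapsto -a(\Vert u\Vert_V^2)\Delta u$ from $V$ to $V^{\ast}$, which follows from (\ref{Cond2a}), together with (\ref{Deriv}). This gives a unique solution
\[
u(\cdot;\tau,v)\in C([\tau,\infty),H)\cap L^2_{loc}(\tau,\infty;V)\cap L^p_{loc}(\tau,\infty;L^p(\mathcal{O})),
\]
with $u\in L^\infty(\tau+\varepsilon,T;V)\cap L^2(\tau+\varepsilon,T;D(A))$. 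The regularity estimates come from testing with $-\Delta u$ and using the Lyapunov-type primitive $\int_0^{s}a(r)dr$ of the non-local term.

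For two solutions $u,v$, subtract the equations and test with $u-v$. Monotonicity kills the diffusion term and (\ref{Deriv}) bounds the reaction term, so Gronwall gives
\[
\Vert u(t)-v(t)\Vert^2\le e^{2\eta(t-\tau)}\Vert u_\tau-v_\tau\Vert^2 .
\]
Hence the deterministic solution map $S(t,\tau):H\to H$ is Lipschitz.

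\textbf{Step 2: random initial data.} Set $u(t,\omega)=S(t,\tau)u_\tau(\omega)$. Since $S(t,\tau)$ is continuous and $u_\tau$ is strongly measurable, $u(t)$ is strongly measurable. Because the Lipschitz constant above is independent of $\omega$, squaring and integrating over $\Omega$ gives the continuity with respect to the initial datum in $L^{2}(\Omega,H)$.

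\textbf{Step 3: the required integrability.} The deterministic energy equality uses (\ref{Diss}), (\ref{Cond1a}) and Young's inequality on $(h,u)$. It yields
\[
\Vert u(t)\Vert^2+m\int_\tau^t\Vert u\Vert_V^2+\alpha\int_\tau^t\Vert u\Vert_{L^p}^p\le C\bigl(\Vert u_\tau\Vert^2+1+\int_\tau^t\Vert h\Vert^2\bigr).
\]
Integrating over $\Omega$ gives the membership in $L^2_{loc}(L^2(\Omega,V))$ and $L^p_{loc}(L^p(\Omega,L^p(\mathcal{O})))$.

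\textbf{Step 4: the smoothing estimates (main obstacle).} The bounds in $L^\infty(\tau+\varepsilon,T;L^2(\Omega,V))$ and $L^2(\tau+\varepsilon,T;L^2(\Omega,D(A)))$ must be linear in $\Vert u_\tau\Vert^2$ so that they can be integrated over $\Omega$. I would first try the uniform Gronwall argument: multiply the $-\Delta u$ estimate by $(s-\tau)$ and use (\ref{Deriv}) to get $(f(u),-\Delta u)\le\eta\Vert u\Vert_V^2$. This controls $\Vert u(t)\Vert_V^2$ by $\varepsilon^{-1}$ times the integrated $V$-norm, which is itself linear in $\Vert u_\tau\Vert^2$. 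It also bounds $\int\Vert \Delta u\Vert^2$ using $a\ge m$. Some care is needed with $\int_0^{\Vert u\Vert_V^2}a$, which is comparable to $\Vert u\Vert_V^2$ by (\ref{Cond1a}).

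\textbf{Step 5: continuity in time and the weak formulation.} Continuity $u\in C([\tau,\infty),L^2(\Omega,H))$ follows from pathwise continuity and dominated convergence, with dominating function $C(\Vert u_\tau\Vert^2+1)$. The weak formulation (\ref{rmequa}) holds pathwise, hence $P$-a.s.

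\textbf{Step 6: uniqueness and the energy equality.} Uniqueness within the class of Definition \ref{defsol} holds because a.s. each path is a deterministic solution, which is unique by Step 1. For (\ref{Energy}), take the pathwise identity
\[
\tfrac{d}{dt}\Vert u\Vert^2+2a(\Vert u\Vert_V^2)\Vert u\Vert_V^2=2(f(u),u)+2(h,u),
\]
which holds via the Lions--Magenes lemma since $u_t\in L^2(V^\ast)+L^q(L^q)$. Write it in integrated form, take expectations using Fubini, which is justified by the integrable bounds of Step 3, and then differentiate to get the equality for a.a. $t>\tau$.
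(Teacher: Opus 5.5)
Your proposal is correct, and its analytic core matches the paper's. The shared ingredients are:
\begin{itemize}
\item pathwise Faedo--Galerkin, i.e.\ the deterministic theory;
\item uniqueness and the bound $\Vert u(t)-v(t)\Vert^{2}\leq e^{2\eta(t-\tau)}\Vert u_{\tau}-v_{\tau}\Vert^{2}$, obtained from the monotonicity (\ref{MonotoneA}) given by (\ref{Cond2a}) together with (\ref{Deriv});
\item the $-\Delta u$ test with a uniform-Gronwall (or time-weighted) argument for the $V$ and $D(A)$ smoothing;
\item the fact that the bounds (\ref{Inequ})--(\ref{Inequ5}) are affine in $\Vert u_{\tau}(\omega)\Vert^{2}$, hence integrable over $\Omega$;
\item dominated convergence for continuity into $L^{2}(\Omega,H)$.
\end{itemize}

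The genuine difference is how you obtain measurability in $\omega$. The paper tracks it through the Galerkin approximations, writing $u_{n}(t,\omega)=u_{n}(t,\tau,u_{\tau}(\omega))$. This tacitly uses well-posedness of the Galerkin system, whose vector field is only continuous since $a\in C(\mathbb{R}^{+})$. It then uses pathwise uniqueness to upgrade the $\omega$-dependent subsequence to weak convergence of the whole sequence for every $t$ and $\omega$, and concludes with the Pettis theorem.

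You instead first show that the limit map $S(t,\tau):H\to H$ is Lipschitz and compose it with $u_{\tau}$. This is shorter and dispenses with Pettis and the whole-sequence argument. The same bound also gives continuous dependence in $L^{2}(\Omega,H)$ at once. The paper's route, by contrast, keeps existence, regularity and measurability inside a single construction. Your derivation of (\ref{Energy}) via the integrated identity, Fubini and differentiation is also more careful than the paper's one-line justification.

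Two minor points. First, testing with $-\Delta u$ gives $a(\Vert u\Vert_{V}^{2})\Vert\Delta u\Vert^{2}\geq m\Vert\Delta u\Vert^{2}$ directly, so the primitive $\int_{0}^{s}a(r)\,dr$ plays no role there; it arises only when testing with $u_{t}$. Second, the bound $(f(u),-\Delta u)\leq\eta\Vert u\Vert_{V}^{2}$ needs $f(0)=0$. You can get this from the reduction in Remark \ref{f}, or else absorb the extra term $(f(0),-\Delta u)$ by Young's inequality.
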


\begin{proof}
We will prove the result using the Faedo-Galerkin method.

Consider a fixed value $T>0.$ Let $\{e_{j}\}_{j\geq1}$ be a sequence of
eigenfunctions of $-\Delta$ in $V$ with homogeneous Dirichlet boundary
conditions, which forms a special basis of $L^{2}(\mathcal{O})$.

We need to ensure that the eigenfunctions are elements of $L^{p}(\mathcal{O}%
)$. Indeed, by the Sobolev embedding theorem, we have
\[
H^{s}(\mathcal{O})\subset L^{p}(\mathcal{O})\quad\text{ for }s\geq n(p-2)/2p.
\]
Taking $A=-\Delta$, we define the domain of a fractional power of $A$ as
\[
D(A^{s/2})=\{u\in L^{2}(\mathcal{O}):\sum_{j=1}^{\infty}\lambda_{j}%
^{s}(u,e_{j})^{2}<\infty\},
\]
where $\lambda_{j}$ is the eigenvalue associated to $e_{j}.$ Also,
$\{e_{j}\}\in D(A^{s/2})$. If we assume $\mathcal{O}$ to be a bounded $C^{s}$
domain (smoothness condition on the domain), by Theorem 6.18 in
\cite{robinson} we have that $D(A^{s/2})\subset H^{s}(\mathcal{O})$ and so
$\{e_{j}\}\in L^{p}(\mathcal{O}).$

Therefore, we can consider $\{e_{j}\}\subset V\cap L^{p}(\mathcal{O})$ a basis
of $L^{2}(\mathcal{O})$, with $s\geq\max\{n(p-2)/2p,1\}$. By this way,
$H_{0}^{s}(\mathcal{O})\subset V\cap L^{p}(\mathcal{O})$ and the set
$\cup_{n\in\mathbb{N}}V_{n}$ is dense in $L^{2}(\mathcal{O})$ and also in
$V\cap L^{p}(\mathcal{O})$ \cite{LSU}, where $V_{n}=span[e_{1},\ldots,e_{n}]$.

As usual, $P_{n}$ is the orthogonal projection in $H$, that is
\[
u_{n}:=P_{n}u=\sum_{j=1}^{n}(u,e_{j})e_{j},\quad\forall u\in H.
\]

Let $u_{\tau}:\Omega\mapsto H$ be a $\mathcal{F}$-measurable mapping such that
$\mathbb{E}(\Vert u_{\tau}\Vert^{2}))<\infty$. Then for every fixed $\omega
\in\Omega$ and for each integer $n\geq1$, we consider the Galerkin
approximations
\[
u_{n}(t,\omega)=\sum_{j=1}^{n}\gamma_{nj}(t,\omega)e_{j},
\]
which satisfy the following deterministic system parametrized by $\omega$:
\begin{equation}
\left\{
\begin{array}
[c]{l}%
\dfrac{d}{dt}(u_{n},e_{i})+a(\Vert u_{n}\Vert_{V}^{2})(\nabla u_{n},\nabla
e_{i})=(f(u_{n}),e_{i})+(h,e_{i}),\ \forall i=1,\ldots,n,\\
u_{n}(\tau,\omega)=P_{n}u_{\tau}(\omega).
\end{array}
\right.  \label{2121}%
\end{equation}

Using the fact that the eigenfunctions $\{e_{j}\}$ are orthonormal, we obtain
that (\ref{2121}) is equivalent to the Cauchy problem
\begin{equation}%
\begin{split}
\frac{d\gamma_{n_{j}}}{dt} &  =-a(\Vert u_{n}\Vert_{V}^{2})\lambda_{j}%
\gamma_{n_{j}}+(f(u_{n}),e_{j})+(h,e_{j}),\\
(u_{n}(\tau,\omega),e_{j}) &  =(P_{n}u_{\tau}(\omega),e_{j}),\quad
j=1,\ldots,n.
\end{split}
\label{1.7}%
\end{equation}

Since the right hand side of (\ref{1.7}) is continuous in $u_{n}(t)$, for
every fixed $\omega\in\Omega$ and $\tau\in\mathbb{R}$ this Cauchy problem
possesses a solution on some interval $[\tau,t_{n}),\tau<t_{n}<T$ \cite[cf. p.
51]{robinson}. In addition, for each $t\geq\tau$, $u_{n}(t,\omega)$ is
$\mathcal{F}-$measurable with respect to $\omega\in\Omega$. Indeed, since
$u_{n}(t,\omega)$ can be written as $u_{n}(t,\tau,u_{\tau}(\omega))$, the
result follows since $u_{n}$ is continuous and $u_{\tau}$ is measurable
\cite[Lemma 8.2.3]{Frankowska}.

We claim that this solution can be extended to any $[0,T]$ with $T>0$. This
will follow from a priori estimates in the space $H$ of the sequence
$\{u_{n}\}$.

Multiplying by $\gamma_{ni}(t,\tau,\omega)$ and summing from $i=1$ to $n$, we
obtain
\begin{equation}%
\begin{split}
&  \frac{1}{2}\frac{d}{dt}\Vert u_{n}(t,\omega)\Vert^{2}+a(\Vert u_{n}%
\Vert_{V}^{2})\Vert u_{n}(t,\omega)\Vert_{V}^{2}\\
&  =(f(u_{n}(t,\omega)),u_{n}(t,\omega))+(h(t),u_{n}(t,\omega))\
\end{split}
\label{1.8}%
\end{equation}
for $\mathit{a.e.}$ $t\in(0,t_{n})$. \newline Using (\ref{Diss}) and the Young
and Poincar\'{e} inequalities we deduce that
\[
(f(u_{n}(t,\omega)),u_{n}(t,\omega))\leq\beta|\mathcal{O}|-\alpha\Vert
u_{n}(t,\omega)\Vert_{L^{p}(\mathcal{O})}^{p},
\]%
\[
(h(t),u_{n}(t,\omega))\leq\frac{m}{2}\Vert u_{n}(t,\omega)\Vert_{V}^{2}%
+\frac{1}{2\lambda_{1}m}\Vert h(t)\Vert^{2}.
\]
\newline Hence, from (\ref{1.8}) it follows that%
\begin{equation}
\frac{1}{2}\frac{d}{dt}\Vert u_{n}(t,\omega)\Vert^{2}+\frac{m}{2}\Vert
u_{n}(t,\omega)\Vert_{V}^{2}+\alpha\Vert u_{n}(t,\omega)\Vert_{L^{p}%
(\mathcal{O})}^{p}\leq\beta|\mathcal{O}|+\frac{1}{2\lambda_{1}m}\Vert
h(t)\Vert^{2},\label{1.9}%
\end{equation}
for \textit{a.e.} $t\in(0,t_{n})$.

Then, integrating (\ref{1.9}) from $\tau$ to $t\in(\tau,t_{n})$ we deduce
\begin{equation}%
\begin{split}
&  \frac{1}{2}\Vert u_{n}(t,\omega)\Vert^{2}+\frac{m}{2}\int_{\tau}^{t}\Vert
u_{n}(s,\omega)\Vert_{V}^{2}ds+\alpha\int_{\tau}^{t}\Vert u_{n}(s,\omega
)\Vert_{L^{p}(\mathcal{O})}^{p}ds\\
&  \leq\beta|\mathcal{O}|(t-\tau)+\frac{1}{2\lambda_{1}m}\int_{\tau}^{t}\Vert
h(s)\Vert^{2}ds+\frac{1}{2}\Vert u_{n}(\tau,\omega)\Vert^{2}\\
&  \leq TK_{1}+K_{2}(T)+\frac{1}{2}\Vert u_{n}(\tau,\omega)\Vert^{2}.
\end{split}
\label{1.10.1}%
\end{equation}
Since $P_{n}u_{\tau}(\omega)\rightarrow u_{\tau}(\omega)$ in $H$, for every
fixed $\tau\in\mathbb{R},\ \omega\in\Omega$ and $T>0$, the sequence
$\{u_{n}(\cdot,\omega)\}$ is well defined and bounded in $L^{\infty}(\tau
,\tau+T;H))\cap L^{2}(\tau,\tau+T;V)\cap L^{p}(\tau,\tau+T;L^{p}(\mathcal{O}%
)$. Also, $\{-\Delta u_{n}\}$ is bounded in $L^{2}(\tau,\tau+T;V^{\ast}).$

On the other hand, by (\ref{Growth}) it follows that%
\[
\int_{\tau}^{\tau+T}\int_{\mathcal{O}}|f(u_{n}(s,\omega))|^{q}dxds\leq
2^{q-1}C^{q}\left(  |\mathcal{O}|T+\int_{\tau}^{\tau+T}\Vert u_{n}%
(s,\omega)\Vert_{L^{p}(\mathcal{O})}^{p}ds\right)  ,
\]
with $\frac{1}{p}+\frac{1}{q}=1.$ Hence, since $\{u_{n}\}$ is bounded in
$L^{p}(\tau,\tau+T;L^{p}(\mathcal{O}))$, $\{f(u_{n})\}$ is bounded in
$L^{q}(\tau,\tau+T;L^{q}(\mathcal{O}))$.

Now, multiplying (\ref{2121}) by $\lambda_{i}\gamma_{ni}(t)$, summing from
$i=1$ to $n$ and using the Young inequality we obtain%
\begin{equation}%
\begin{split}
&  \frac{1}{2}\frac{d}{dt}\Vert u_{n}(t,\omega)\Vert_{V}^{2}+m\Vert\Delta
u_{n}(t,\omega)\Vert^{2}\\
&  \leq(f(u_{n}(t,\omega)),-\Delta u_{n}(t,\omega))+(h(t),-\Delta
u_{n}(t,\omega))\\
&  \leq\eta\Vert u_{n}(t,\omega)\Vert_{V}^{2}+\frac{1}{2m}\Vert h(t)\Vert
^{2}+\frac{m}{2}\Vert\Delta u_{n}(t,\omega)\Vert^{2},
\end{split}
\label{Inequn}%
\end{equation}
where we have supposed by Remark \ref{f} that $f(0)=0$. Hence, if we apply the
Uniform Gronwall Lemma to the following inequality
\[
\frac{d}{dt}\Vert u_{n}(t,\omega)\Vert_{V}^{2}\leq2\eta\Vert u_{n}%
(t,\omega)\Vert_{V}^{2}+\frac{1}{m}\Vert h(t)\Vert^{2},
\]
in view of (\ref{1.10.1}) for $r>0$ we obtain that
\begin{equation}
\Vert u_{n}(t,\omega)\Vert_{V}^{2}\leq\left(  \frac{2TK_{1}+2K_{2}(T)+\Vert
u_{n}(\tau,\omega)\Vert^{2}}{mr}+K_{3}(T)\right)  e^{2\eta r}\label{Inequn2}%
\end{equation}
for $t\geq\tau+r=t_{1}$. Therefore,
\[
\{\Vert u_{n}(\cdot,\omega)\Vert_{V}\}\text{ is uniformly bounded in }%
[t_{1},\tau+T]
\]
and by the continuity of the function $a$ we get that
\[
\{a(\Vert u_{n}(\cdot,\omega)\Vert_{V}^{2})\}\text{ is bounded in }[t_{1}%
,\tau+T].
\]
Also, it follows that
\begin{equation}
\{u_{n}(\cdot,\omega)\}\text{ is bounded in }L^{\infty}(t_{1},\tau
+T;V).\label{coninfi}%
\end{equation}
On the other hand, by (\ref{Inequn2}) and integrating in (\ref{Inequn}) we
obtain that%
\begin{align}
&  m\int_{t_{1}}^{T}\Vert\Delta u_{n}(t,\omega)\Vert^{2}dt\nonumber\\
&  \leq\frac{1}{2}\Vert u_{n}(t_{1},\omega)\Vert_{V}^{2}+\eta\int_{t_{1}}%
^{T}\Vert u_{n}(t,\omega)\Vert_{V}^{2}dt+\frac{1}{2m}\int_{t_{1}}^{T}\Vert
h(t)\Vert^{2}dt\nonumber\\
&  \leq K_{4}(T,r)(1+\Vert u_{n}(\tau,\omega)\Vert^{2})+\eta\int_{t_{1}}%
^{T}\Vert u_{n}(t,\omega)\Vert_{V}^{2}dt+\frac{1}{2m}\int_{t_{1}}^{T}\Vert
h(t)\Vert^{2}dt,\label{Inequn3}%
\end{align}
so by (\ref{1.10.1})%
\begin{equation}
\{u_{n}(\cdot,\omega)\}\text{ is bounded in }L^{2}(t_{1},\tau
+T;D(A)).\label{cotda}%
\end{equation}
This implies that
\[
\{-\Delta u_{n}(\cdot,\omega)\}
\]
and
\[
\{a(\Vert u_{n}(\cdot,\omega)\Vert_{V}^{2})\Delta u_{n}(\cdot,\omega)\}
\]
are bounded in $L^{2}(t_{1},\tau+T;L^{2}(\mathcal{O}))$.

Thus,
\begin{equation}
\left\{  \dfrac{du_{n}(\cdot,\omega)}{dt}\right\}  \text{ is bounded in }%
L^{q}(t_{1},\tau+T;L^{q}(\mathcal{O})).\label{CotaDeriv}%
\end{equation}
Therefore, there exists $u($\textperiodcentered$,\omega)\in L^{\infty}%
(t_{1},\tau+T;V)\cap L^{2}(\tau,\tau+T;V)\cap L^{\infty}(\tau,\tau+T;H)\cap
L^{2}(t_{1},\tau+T;D(A))\cap L^{p}(\tau,\tau+T;L^{p}(\mathcal{O}))$ such that
$\dfrac{du}{dt}\in L^{q}(t_{1},\tau+T;L^{q}\left(  \mathcal{O}\right)  )$ and
a subsequence $\{u_{n}\}$, relabelled the same, such that (for each $\omega
\in\Omega$)
\begin{equation}%
\begin{split}
u_{n} &  \overset{\ast}{\rightharpoonup}u\text{ in }L^{\infty}(t_{1}%
,\tau+T;V),\\
u_{n} &  \overset{\ast}{\rightharpoonup}u\text{ in }L^{\infty}(\tau
,\tau+T;H),\\
u_{n} &  \rightharpoonup u\text{ in }L^{2}(\tau,\tau+T;V),\\
u_{n} &  \rightharpoonup u\text{ in }L^{p}(\tau,\tau+T;L^{p}(\mathcal{O})),\\
u_{n} &  \rightharpoonup u\text{ in }L^{2}(t_{1},T;D(A)),\\
\frac{du_{n}}{dt} &  \rightharpoonup\frac{du}{dt}\text{ in }L^{q}(t_{1}%
,\tau+T;L^{q}(\mathcal{O})),\\
f(u_{n}) &  \rightharpoonup\chi\text{ in }L^{q}(\tau,\tau+T;L^{q}%
(\mathcal{O})),\\
{a(\Vert u_{n}\Vert_{V}^{2})} &  \overset{\ast}{\rightharpoonup}b\text{ in
}L^{\infty}(t_{1},\tau+T),
\end{split}
\label{1.29}%
\end{equation}
where $\rightharpoonup$ means weak convergence and $\overset{\ast
}{\rightharpoonup}$ weak star convergence. Also, let $t_{0}\in(\tau,\tau+T)$
be fixed. Then, there exists $v\in H$ such that
\begin{equation}
u_{n}(t_{0},\omega)\rightharpoonup v\text{ in }H\label{cont0}%
\end{equation}
for some subsequence. Moreover, by (\ref{cotda})-(\ref{CotaDeriv}) the
Aubin-Lions Compactness Lemma gives that
\[
u_{n}(\cdot,\omega)\rightarrow u(\cdot,\omega)\text{ in }L^{2}(t_{1}%
,\tau+T;V),
\]
so
\[
u_{n}(t,\omega)\rightarrow u(t,\omega)\text{ in }V\text{ a.e. on }(t_{1}%
,\tau+T).
\]
Consequently, by Corollary 1.12 in \cite{robinson}, there exists a subsequence
$\{u_{n}\}$, relabelled the same, such that
\[
u_{n}(t,\omega)(x)\rightarrow u(t,\omega)(x)\text{ a.e. in }(\tau
,\tau+T)\times\mathcal{O}.
\]

Since $f$ is continuous, it follows that
\[
f(u_{n}(t,\omega)(x))\rightarrow f(u(t,\omega)(x))\text{ a.e. in }(\tau
,\tau+T)\times\mathcal{O}.
\]
Therefore, in view of (\ref{1.29}), by \cite[Lemma 1.3]{lions} we have that
$\chi=f(u)$.

As a consequence, by the continuity of $a$, we get that
\[
a(\Vert u_{n}(\cdot,\omega)\Vert_{V}^{2})\rightarrow a(\Vert u(\cdot
,\omega)\Vert_{V}^{2})\quad\text{ a.e. on }(t_{1},\tau+T).
\]
Since the sequence is bounded, by the Lebesgue theorem this convergence takes
place in $L^{2}(t_{1},\tau+T)$ and $b=a(\Vert u\Vert_{H_{0}^{1}}^{2})$ a.e. on
$(t_{1},\tau+T)$.\newline Thus,
\begin{equation}
a(\Vert u_{n}(\cdot,\omega)\Vert_{V}^{2})\Delta u_{n}(\cdot,\omega
)\rightharpoonup a(\Vert u(\cdot,\omega)\Vert_{V}^{2})\Delta u(\cdot
,\omega),\label{convergencianolocal}%
\end{equation}
in $L^{2}(t_{1},\tau+T;H).$ Also, we know \cite[p.224]{robinson} that
\begin{equation}
P_{n}f(u_{n})\rightharpoonup\chi.\label{ConvergPnf}%
\end{equation}

Since $\{e_{i}\}$ is dense in $V\cap L^{p}(\mathcal{O})$, in view of
(\ref{1.29}), (\ref{convergencianolocal}) and (\ref{ConvergPnf}) we can pass
to the limit in (\ref{2121}) and conclude that (\ref{rmequa}) holds for all
$\xi\in V\cap L^{p}(\mathcal{O})$.

We need to guarantee that the initial condition of the problem makes sense. If
$u$ is a weak solution to (\ref{RD}), taking into account (\ref{Growth}) and
(\ref{Cond1a}) it follows that
\begin{equation}
\frac{du}{dt}=a(\Vert u\Vert_{V}^{2})\Delta u+f(u)+h\in L^{2}(\tau
,\tau+T;V^{\ast})+L^{q}(\tau,\tau+T;L^{q}(\mathcal{O})).\label{6b}%
\end{equation}
Therefore, by \cite[p.33]{chepvishik} $u(\cdot,\omega)\in C([\tau,\tau+T],H)$,
so the initial condition makes sense when $u_{\tau}(\omega)\in H.$

We have to check that $u(\tau,\omega)=u_{\tau}(\omega)$ and $u(t_{0}%
,\tau,\omega)=v.$ Indeed, let be $\phi\in C^{1}([\tau,\tau+T];V\cap
L^{p}(\mathcal{O}))$, with $\phi(\tau+T)=0$, $\phi(\tau)\not =0$. Using
(\ref{6b}) we can multiply the equation in (\ref{RD}) by $\phi$ and integrate
by parts in the $t$ variable to obtain that
\begin{equation}%
\begin{split}
&  \int_{\tau}^{\tau+T}\left(  -\left(  u\left(  t,\omega\right)
,\phi^{\prime}\left(  t\right)  \right)  -a(\Vert u(t,\omega\Vert_{V}%
^{2})\left\langle \Delta u\left(  t,\omega\right)  ,\phi\left(  t\right)
\right\rangle _{V^{\ast},V}\right)  dt\\
&  =\int_{\tau}^{\tau+T}\left(  f(u(t,\omega))+h(t),\phi\left(  t\right)
\right)  dt+\left(  u\left(  \tau,\omega\right)  ,\phi\left(  \tau\right)
\right)  ,
\end{split}
\label{limit}%
\end{equation}

\begin{equation}%
\begin{split}
&  \int_{\tau}^{\tau+T}\left(  -\left(  u_{n}\left(  t,\omega\right)
,\phi^{\prime}\left(  t\right)  \right)  -a(\Vert u_{n}(t,\omega\Vert
_{H_{0}^{1}(\mathcal{O})}^{2})\left\langle \Delta u_{n}\left(  t,\omega
\right)  ,\phi\left(  t\right)  \right\rangle _{V^{\ast},V}\right)  dt\\
&  =\int_{\tau}^{\tau+T}\left(  f(u_{n}(t,\omega))+h(t),\phi\left(  t\right)
\right)  dt+\left(  u_{n}\left(  \tau,\omega\right)  ,\phi\left(  \tau\right)
\right)  .
\end{split}
\label{aprox}%
\end{equation}
Passing to the limit in (\ref{aprox}), taking in to account (\ref{limit}) and
bearing in mind $u_{n}(\tau,\omega)=P_{n}u_{\tau}(\omega)\rightarrow u_{\tau
}(\omega)$ we get%
\[
(u\left(  \tau,\omega),\phi\left(  \tau\right)  \right)  =\left(  u_{\tau
}(\omega),\phi\left(  \tau\right)  \right)  .
\]
Since $\phi\left(  \tau\right)  \in V\cap L^{p}(\mathcal{O})$ is arbitrary, we
infer that $u(\tau,\omega)=u_{\tau}(\omega)$.

In a similar way we check that
\begin{equation}
u(t_{0},\omega)=v.\label{initime}%
\end{equation}
By (\ref{cont0}) and (\ref{initime}) we get
\begin{equation}
u_{n}(t_{0},\omega)\rightharpoonup u(t_{0},\omega)\text{ in }%
H\label{rightconv}%
\end{equation}

Hence, $u(t,\omega)$ is a regular solution to (\ref{RD}) satisfying $u\left(
\tau,\omega\right)  =u_{\tau}(\omega),$ for a fixed $\omega$. This solution is
unique (which is proved exactly as in Theorem 13 in \cite{CMRV}), so any
converging subsequence has the same limit. Hence, by (\ref{rightconv}) the
whole sequence $u_{n}(t,\omega)$ converges weakly to $u(t,\omega)$ in $H$ for
any $t\geq\tau$ and $\omega\in\Omega$. Since $u_{n}(t,\omega)$ is measurable
in $\omega\in\Omega$, the weak limit $u(t,\omega)$ is weakly measurable, and
this implies that $\omega\mapsto u(t,\omega)$ is measurable as by the Pettis
theorem strong and weak measurability are equivalent properties when the space
is separable (see \cite[p. 42]{diestel}).

On the other hand, by (\ref{1.10.1}), (\ref{Inequn2}) and (\ref{Inequn3}) we
obtain that%
\begin{equation}
\Vert u(t,\omega)\Vert^{2}\leq\Vert u_{\tau}(\omega)\Vert^{2}+2TK_{1}%
+2K_{2}(T),\text{ }\forall\ t\in\lbrack\tau,\tau+T],\label{Inequ}%
\end{equation}%
\begin{equation}
\int_{\tau}^{\tau+T}\Vert u(s,\omega)\Vert_{V}^{2}ds\leq\frac{1}{m}\Vert
u_{\tau}(\omega)\Vert^{2}+\frac{2TK_{1}+2K_{2}(T)}{m},\label{Inequ2}%
\end{equation}%
\begin{equation}
\int_{\tau}^{\tau+T}\Vert u(s,\omega)\Vert_{L^{p}(\mathcal{O})}^{p}ds\leq
\frac{1}{2\alpha}\Vert u_{\tau}(\omega)\Vert^{2}+\frac{TK_{1}+K_{2}(T)}%
{\alpha},\label{Inequ3}%
\end{equation}%
\begin{equation}
\Vert u(t,\omega)\Vert_{V}^{2}\leq\left(  \frac{2TK_{1}+2K_{2}(T)+\Vert
u_{\tau}(\omega)\Vert^{2}}{mr}+K_{3}(T)\right)  e^{2\eta r},\ \forall
t\geq\tau+r,\label{Inequ4}%
\end{equation}%
\begin{align}
\int_{\tau+r}^{T}\Vert\Delta u_{n}(t,\omega)\Vert^{2}dt &  \leq\frac
{K_{4}(T,r)}{m}(1+\Vert u_{\tau}(\omega)\Vert^{2})+\frac{1}{2m^{2}}\int%
_{\tau+r}^{T}\Vert h(t)\Vert^{2}dt\label{Inequ5}\\
&  +\frac{\eta}{m^{2}}\left(  \Vert u_{\tau}(\omega)\Vert^{2}+2TK_{1}%
+2K_{2}(T)\right)  ,\nonumber
\end{align}
for any $r>0$, $\tau\in\mathbb{R},\ T>0$ and $\omega\in\Omega$. Since
$u_{\tau}\in L^{2}(\Omega,H),$ we have
\begin{equation}%
\begin{split}
u &  \in L_{loc}^{\infty}(\tau,\infty;L^{2}(\Omega,H))\cap L_{loc}^{2}%
(\tau,\infty;L^{2}(\Omega,V))\cap L_{loc}^{p}(\tau,\infty;L^{p}(\Omega
,L^{p}(\mathcal{O})))\\
&  \cap L^{\infty}(\tau+\varepsilon,\tau+T;L^{2}(\Omega,V))\cap L^{2}%
(\tau+\varepsilon,\tau+T;L^{2}(\Omega,D(A))),
\end{split}
\label{usol}%
\end{equation}
for any $0<\varepsilon<T$. For every fixed $\omega$, $u(\cdot,\omega)\in
C([\tau,\tau+T],H)$ and in view of (\ref{Inequ}) and the Lebesgue dominated
convergence theorem, we have that
\[
u\in C([\tau,\infty),L^{2}(\Omega,H))
\]
obtaining that $u$ is a solution in the sense of Definition \ref{defsol}.

We will prove that the solution is unique. If $u,v$ are two solutions, then
the difference $w=u-v$ satisfies%
\[
\frac{dw}{dt}-a(\Vert u\Vert_{V}^{2})\Delta u+a(\Vert v\Vert_{V}^{2})\Delta
v=f(u_{1})-f(u_{2}),
\]
so multiplying by $v$ and using (\ref{Deriv}) we have%
\[
\frac{1}{2}\frac{d}{dt}\left\Vert v\right\Vert ^{2}+\int_{\mathcal{O}}\left(
-a(\Vert u\Vert_{V}^{2})\Delta u+a(\Vert v\Vert_{V}^{2})\Delta v\right)
wdx\leq\eta\left\Vert v\right\Vert ^{2}.
\]
Since (\ref{Cond2a}) implies%
\begin{align}
&  \int_{\mathcal{O}}\left(  -a(\Vert u\Vert_{V}^{2})\Delta u+a(\Vert
v\Vert_{V}^{2})\Delta v\right)  (u-v)dx\nonumber\\
&  =\int_{\mathcal{O}}(a(\Vert u\left(  t\right)  \Vert_{V}^{2})|\nabla
u|^{2}-a(\Vert u\left(  t\right)  \Vert_{V}^{2})\nabla u\nabla v-a(\Vert
v\left(  t\right)  \Vert_{V}^{2})\nabla u\nabla v+a(\Vert v\left(  t\right)
\Vert_{V}^{2})|\nabla v|^{2})dx\nonumber\\
&  \geq a(\Vert u\left(  t\right)  \Vert_{V}^{2})\Vert u\left(  t\right)
\Vert_{V}^{2}-\left(  a(\Vert u\left(  t\right)  \Vert_{V}^{2})+a(\Vert
v\left(  t\right)  \Vert_{V}^{2})\right)  \Vert u\left(  t\right)  \Vert
_{V}\Vert v\left(  t\right)  \Vert_{V}+a(\Vert v\left(  t\right)  \Vert
_{V}^{2})\Vert v\left(  t\right)  \Vert_{V}^{2}\nonumber\\
&  =\left(  a(\Vert u\left(  t\right)  \Vert_{V}^{2})\Vert u\left(  t\right)
\Vert_{V}-a(\Vert v\left(  t\right)  \Vert_{V}^{2})\Vert v\left(  t\right)
\Vert_{V}\right)  \left(  \Vert u\left(  t\right)  \Vert_{V}-\Vert v\left(
t\right)  \Vert_{V}\right)  \geq0,\label{MonotoneA}%
\end{align}
we obtain by the Gronwall Lemma and taking expectations that%
\[
\mathbb{E}\left(  \left\Vert w(t)\right\Vert ^{2}\right)  \leq e^{2\eta
(t-\tau)}\mathbb{E}\left(  \left\Vert u(\tau)-v(\tau)\right\Vert ^{2}\right)
,
\]
which implies uniqueness and the continuity of solutions with respecto to the
initial datum as well.

Finally, equality (\ref{Energy}) is obtained multiplying the equation by
$u(t)$ and taking expectations.
\end{proof}

\bigskip

Let $\Phi$ be the mapping from $\mathbb{R}^{+}\times\mathbb{R}\times
L^{2}(\Omega,H)$ to $L^{2}(\Omega,H)$ given by
\[
\Phi(t,\tau,u_{\tau})=u(t+\tau)
\]
where $t\geq0,$ $\tau\in\mathbb{R},$ $u_{\tau}\in L^{2}(\Omega,H),$ and $u$ is
the unique solution to (\ref{RD}) with $u\left(  \tau\right)  =u_{\tau}$. By
the previous result, $\Phi$ is a continuous mean random dynamical system on
$L^{2}(\Omega,H)$.

We recall that $\lambda_{1}>0$ is the first eigenvalue of $-\Delta$ with
homogeneous Dirichlet boundary conditions, and assume that there exists
$\mu\in(0,2m\lambda_{1})$ such that
\begin{equation}
\int_{-\infty}^{0}e^{\mu s}\Vert h(s)\Vert^{2}ds<\infty.\label{3.42}%
\end{equation}

\begin{remark}
Following Remark \ref{f}, the function $\overline{h}(t)=h(t)+f(0)$ satisfies
assumption (\ref{3.42}) as well.
\end{remark}

According to the previous assumption, let us consider the following universe:
denote $\mathcal{D}$ the class of all families of nonempty bounded subsets of
$L^{2}(\Omega,H),$ $D=\{D(\tau):\tau\in\mathbb{R}\},$ such that
\begin{equation}
\lim_{\tau\rightarrow-\infty}e^{\mu\tau}\sup_{v\in D(\tau)}\Vert v\Vert
_{L^{2}(\Omega,H)}^{2}=0.\label{3.40}%
\end{equation}
Within this setting we can derive uniform estimates on the solutions to
(\ref{RD}) that will lead to the existence of $\mathcal{D}$-pullback absorbing
family in $L^{2}(\Omega,H).$ Namely, we have the following result.

\begin{lemma}
Assume that (\ref{Diss})-(\ref{Cond2a}) and (\ref{3.42}) hold. Then for any
$\tau\in\mathbb{R}$ and $D=\{D(t):t\in\mathbb{R}\}\in\mathcal{D}$ there exists
$T=T(\tau,D)>0$ such that for all $t\geq T$ and $u_{\tau-t}\in D(\tau-t)$ we
have
\[
E(\Vert u(\tau)\Vert^{2})\leq M+Me^{-\mu\tau}\int_{-\infty}^{\tau}e^{\mu
s}\Vert h(s)\Vert^{2}ds,
\]
where $u$ is the unique solution satisfying $u(t-\tau)=u_{t-\tau}$ and $M$
denotes a positive constant independent of $\tau$ and $D$ (but dependent on
$\mu$).
\end{lemma}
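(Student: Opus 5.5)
We use the energy equality (\ref{Energy}) from the existence theorem, combined with a pointwise-in-time differential inequality for $\mathbb{E}(\Vert u(t)\Vert^{2})$ and a Gronwall argument with weight $e^{\mu t}$. Throughout, $u$ denotes the solution starting at time $\tau-t$ from $u_{\tau-t}\in D(\tau-t)$; the statement's ``$u(t-\tau)=u_{t-\tau}$'' is read as this.

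\textbf{Step 1: a differential inequality.} By (\ref{Cond1a}) we have $a(\Vert u\Vert_{V}^{2})\Vert u\Vert_{V}^{2}\geq m\Vert u\Vert_{V}^{2}$. By (\ref{Diss}), $(f(u),u)\leq\beta|\mathcal{O}|$, and we simply drop the $-\alpha\Vert u\Vert_{L^{p}}^{p}$ term. The forcing is split with Young's inequality and a small parameter $\varepsilon>0$:
\[
2(h,u)\leq\varepsilon\lambda_{1}\Vert u\Vert^{2}+\frac{1}{\varepsilon\lambda_{1}}\Vert h\Vert^{2}\leq\varepsilon\Vert u\Vert_{V}^{2}+\frac{1}{\varepsilon\lambda_{1}}\Vert h\Vert^{2}.
\]
Since $\mu<2m\lambda_{1}$, we choose $\varepsilon=2m-\mu/\lambda_{1}>0$. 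By Poincar\'{e}, $(2m-\varepsilon)\Vert u\Vert_{V}^{2}\geq\mu\Vert u\Vert^{2}$, and (\ref{Energy}) yields, for a.a.\ $s$,
\[
\frac{d}{ds}\mathbb{E}(\Vert u(s)\Vert^{2})+\mu\,\mathbb{E}(\Vert u(s)\Vert^{2})\leq2\beta|\mathcal{O}|+\frac{1}{\varepsilon\lambda_{1}}\Vert h(s)\Vert^{2}.
\]

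\textbf{Step 2: Gronwall with weight.} We multiply by $e^{\mu s}$ and integrate from $\tau-t$ to $\tau$. The function $s\mapsto\mathbb{E}(\Vert u(s)\Vert^{2})$ is absolutely continuous: it lies in $C([\tau-t,\infty),L^{2}(\Omega,H))$, and (\ref{Energy}) identifies its a.e.\ derivative with an $L^{1}_{loc}$ function. Justifying this is the one technical point; if needed, we redo the estimate at the Galerkin level for fixed $\omega$, take expectations, and pass to the limit using the weak lower semicontinuity already available from the existence proof. The integration gives
\[
\mathbb{E}(\Vert u(\tau)\Vert^{2})\leq e^{-\mu t}\mathbb{E}(\Vert u_{\tau-t}\Vert^{2})+\frac{2\beta|\mathcal{O}|}{\mu}+\frac{e^{-\mu\tau}}{\varepsilon\lambda_{1}}\int_{-\infty}^{\tau}e^{\mu s}\Vert h(s)\Vert^{2}ds.
\]
The last integral is finite by (\ref{3.42}) together with $h\in L^{2}_{loc}$.

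\textbf{Step 3: absorbing the initial term.} We rewrite
\[
e^{-\mu t}\mathbb{E}(\Vert u_{\tau-t}\Vert^{2})=e^{-\mu\tau}\,e^{\mu(\tau-t)}\sup_{v\in D(\tau-t)}\Vert v\Vert_{L^{2}(\Omega,H)}^{2}.
\]
By (\ref{3.40}), this tends to $0$ as $t\to+\infty$ for fixed $\tau$. Hence there is $T(\tau,D)$ such that for $t\geq T$ it is at most $1$.

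We then take
\[
M=\max\Bigl\{1+\frac{2\beta|\mathcal{O}|}{\mu},\ \frac{1}{\varepsilon\lambda_{1}}\Bigr\},
\]
which depends on $\mu$ but not on $\tau$ or $D$. The main obstacle is the rigorous justification in Step 2 that (\ref{Energy}) can be integrated. Everything else is a routine dissipativity computation, with the choice of $\varepsilon$ being exactly where $\mu<2m\lambda_{1}$ is used.
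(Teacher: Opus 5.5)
Your proof is correct and follows essentially the same route as the paper: the energy equality (\ref{Energy}) combined with (\ref{Diss}), (\ref{Cond1a}), Poincar\'{e} and a Young splitting whose constant $1/(\varepsilon\lambda_{1})=1/(2m\lambda_{1}-\mu)$ is the paper's, then multiplication by $e^{\mu s}$, integration over $[\tau-t,\tau]$, and absorption of the initial term via (\ref{3.40}). One small slip: in Step 3, $e^{-\mu t}\mathbb{E}(\Vert u_{\tau-t}\Vert^{2})$ is bounded by $e^{-\mu\tau}e^{\mu(\tau-t)}\sup_{v\in D(\tau-t)}\Vert v\Vert_{L^{2}(\Omega,H)}^{2}$, so that step should read $\leq$ rather than $=$.
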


\begin{proof}
From the energy equality (\ref{Energy}) and assumptions (\ref{Diss}),
(\ref{Growth}), (\ref{Cond1a}) and the Poincar\'{e} inequality we have
\begin{align*}
&  \frac{d}{ds}E(\Vert u(s)\Vert^{2})+2m\lambda_{1}E(\Vert u(s)\Vert^{2})\\
\leq &  -2\alpha E(\Vert u(s)\Vert_{L^{p}(\mathcal{O})}^{p})+2\beta
|\mathcal{O}|+\frac{1}{2m\lambda_{1}-\mu}\Vert h(s)\Vert^{2}+(2m\lambda
_{1}-\mu)E(\Vert u(s)\Vert^{2})\ \text{for }a.a.\ s>\tau.
\end{align*}

Multiplying by $e^{\mu s}$ we deduce
\[
\frac{d}{ds}(e^{\mu s}E(\Vert u(s)\Vert^{2}))+2\alpha e^{\mu s}E(\Vert
u(s)\Vert_{L^{p}(\mathcal{O})}^{p})\leq2\beta|\mathcal{O}|e^{\mu s}+\frac
{1}{2m\lambda_{1}-\mu}e^{\mu s}\Vert h(t)\Vert^{2}\ a.a.\ s>\tau.
\]
Integrating in $[\tau-t,\tau]$
\begin{align*}
&  E(\Vert u(\tau)\Vert^{2})+2\alpha e^{-\mu\tau}\int_{\tau-t}^{\tau}e^{\mu
s}E(\Vert u(s)\Vert_{L^{p}(\mathcal{O})}^{p})ds\\
\leq &  e^{-\mu\tau}e^{\mu(\tau-t)}E(\Vert u_{\tau-t}\Vert^{2})+\frac
{1}{2m\lambda_{1}-\mu}e^{-\mu\tau}\int_{\tau-t}^{\tau}e^{\mu s}\Vert
h(s)\Vert^{2}ds+2\frac{\beta|\mathcal{O}|}{\mu}\quad\forall t\geq0.
\end{align*}
Since $u_{\tau-t}\in D(\tau-t),$ we have that there exists $T=T(\tau,D)$ such
that
\[
e^{-\mu\tau}e^{\mu(\tau-t)}E(\Vert u_{\tau-t}\Vert^{2})\leq1\quad\forall t\geq
T.
\]
The proof is complete.
\end{proof}

\begin{corollary}
Suppose that (\ref{Diss})-(\ref{Cond2a}) and (\ref{3.42}) hold. Then, the
family $K=\{K(\tau):\tau\in\mathbb{R}\}$ with $K(\tau)=\{u\in L^{2}%
(\Omega,L^{2}(\mathcal{O})):E(\Vert u\Vert^{2})\leq R(\tau)\},$ where
\[
R(\tau)=M+Me^{-\mu\tau}\int_{-\infty}^{\tau}e^{\mu s}\Vert h(s)\Vert^{2}ds,
\]
belongs to $\mathcal{D}$ and is a weakly compact $\mathcal{D}$-pullback
absorbing family for $\Phi.$
\end{corollary}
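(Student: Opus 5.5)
The proof checks three properties of $K$ in turn: membership in $\mathcal{D}$, weak compactness of each $K(\tau)$, and $\mathcal{D}$-pullback absorption.

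\emph{Membership in $\mathcal{D}$.} Each $K(\tau)$ is a closed ball of radius $R(\tau)^{1/2}$ centred at the origin of the Hilbert space $L^{2}(\Omega,H)$. It is nonempty and bounded because $R(\tau)<\infty$. Finiteness of $R(\tau)$ follows from (\ref{3.42}) together with $h\in L^{2}_{loc}(\mathbb{R},H)$, after splitting $\int_{-\infty}^{\tau}=\int_{-\infty}^{0}+\int_{0}^{\tau}$ when $\tau>0$. It remains to verify (\ref{3.40}), for which we compute
\[
e^{\mu\tau}\sup_{v\in K(\tau)}\Vert v\Vert_{L^{2}(\Omega,H)}^{2}=e^{\mu\tau}R(\tau)=Me^{\mu\tau}+M\int_{-\infty}^{\tau}e^{\mu s}\Vert h(s)\Vert^{2}ds .
\]
As $\tau\rightarrow-\infty$, the first term tends to $0$ since $\mu>0$. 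The second term is the tail of the convergent integral (\ref{3.42}), so it also tends to $0$. Hence $K\in\mathcal{D}$.

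\emph{Weak compactness.} $L^{2}(\Omega,H)$ is a Hilbert space, hence reflexive. Closed balls in it are bounded, closed and convex, so they are weakly closed, and by Kakutani's theorem (Banach--Alaoglu in the reflexive setting) they are weakly compact. Thus every $K(\tau)$ is weakly compact.

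\emph{Absorption.} This follows directly from the preceding lemma. Given $\tau\in\mathbb{R}$ and $D\in\mathcal{D}$, the lemma provides $T=T(\tau,D)$ such that for all $t\geq T$ and $u_{\tau-t}\in D(\tau-t)$ the solution satisfies $E(\Vert u(\tau)\Vert^{2})\leq R(\tau)$. In terms of $\Phi$ this reads $\Phi(t,\tau-t)D(\tau-t)\subset K(\tau)$ for all $t\geq T$, which is exactly the definition of a $\mathcal{D}$-pullback absorbing family.

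The only step needing any care is checking (\ref{3.40}), specifically the tail argument for the integral term. Once that holds, the conclusion follows and Theorem \ref{AttrExist} applies, because $\mathcal{D}$ is clearly inclusion-closed.
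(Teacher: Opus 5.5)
Your proposal is correct and follows essentially the paper's route. The paper gives no separate proof here, because the corollary is read off from the preceding lemma in the same way as Lemma \ref{Absorbing} for the stochastic problem: absorption is the lemma's estimate, weak compactness holds since each $K(\tau)$ is a closed ball in the Hilbert space $L^{2}(\Omega,H)$, and $K\in\mathcal{D}$ because $e^{\mu\tau}R(\tau)=Me^{\mu\tau}+M\int_{-\infty}^{\tau}e^{\mu s}\Vert h(s)\Vert^{2}ds\rightarrow0$ as $\tau\rightarrow-\infty$, with your checks that $R(\tau)<\infty$ for $\tau>0$ and that $\mathcal{D}$ is inclusion-closed being correct details the paper leaves implicit.
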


This allows us to use Theorem \ref{AttrExist} to conclude the main result of
this section.

\begin{theorem}
Suppose that (\ref{Diss})-(\ref{Cond2a}) and (\ref{3.42}) hold. Then, the
continuous mean random dynamical system $\Phi$ defined through the solutions
to problem (\ref{RD}) has the unique weak $\mathcal{D}$-pullback mean random
attractor $\mathcal{A}=\{\mathcal{A}(\tau):\tau\in\mathbb{R}\}\in\mathcal{D}.$
\end{theorem}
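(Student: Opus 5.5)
The plan is to apply Theorem \ref{AttrExist} with $X=H=L^{2}(\mathcal{O})$, which is a Hilbert space and hence reflexive, and exponent $p=2$ in $L^{p}(\Omega,X)$. Three hypotheses have to be checked.

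First, $\Phi$ is a mean random dynamical system on $L^{2}(\Omega,H)$. $\Phi(0,\tau)$ is the identity because the solution starts at $u_{\tau}$. The cocycle property $\Phi(t+s,\tau)=\Phi(t,s+\tau)\circ\Phi(s,\tau)$ follows from uniqueness of regular solutions in the existence theorem above. The restriction of the solution starting at $u_{\tau}$ to $[\tau+s,\infty)$ is again a regular solution with initial datum $u(\tau+s)\in L^{2}(\Omega,H)$. Regularity away from the initial time is inherited, and the integral identity (\ref{rmequa}) is obtained by subtracting the identities at times $t$ and $\tau+s$. By uniqueness, this restriction coincides with $\Phi(\cdot,\tau+s)u(\tau+s)$.

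Second, $\mathcal{D}$ is inclusion-closed. If $B(t)\subset D(t)$ with $D\in\mathcal{D}$, then $\sup_{v\in B(\tau)}\Vert v\Vert^{2}_{L^{2}(\Omega,H)}\le\sup_{v\in D(\tau)}\Vert v\Vert^{2}_{L^{2}(\Omega,H)}$. Hence the limit in (\ref{3.40}) also vanishes for $B$, so $B\in\mathcal{D}$.

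Third, we need a weakly compact $\mathcal{D}$-pullback absorbing family in $\mathcal{D}$, which is supplied by the preceding Corollary. To confirm its claims:
\begin{itemize}
\item $K(\tau)$ is a closed ball of radius $R(\tau)^{1/2}$ in the Hilbert space $L^{2}(\Omega,H)$, so it is weakly compact (Banach--Alaoglu together with reflexivity, since a closed convex set is weakly closed).
\item $K\in\mathcal{D}$: using $\mu\tau+(-\mu\tau)=0$,
\[
e^{\mu\tau}R(\tau)=Me^{\mu\tau}+M\int_{-\infty}^{\tau}e^{\mu s}\Vert h(s)\Vert^{2}ds .
\]
As $\tau\to-\infty$, the first term tends to $0$ because $\mu>0$. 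The second tends to $0$ as the tail of an integral that is convergent by (\ref{3.42}); convergence up to any finite $\tau$ follows from (\ref{3.42}) and $h\in L^{2}_{loc}$.
\item Absorption is exactly the preceding Lemma.
\end{itemize}
Theorem \ref{AttrExist} then yields existence and uniqueness of the weak $\mathcal{D}$-pullback mean random attractor, given by (\ref{AttrCharac}), with $\mathcal{A}\in\mathcal{D}$ as part of the definition.

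The only step with any substance is the cocycle property, because the definition of regular solution demands $L^{\infty}(\tau+\varepsilon,T;L^{2}(\Omega,V))$ regularity only away from the initial time. I would handle it by restarting from time $\tau+s$. The restarted function $u|_{[\tau+s,\infty)}$ is even more regular, and the uniqueness argument in the proof above (the monotonicity (\ref{MonotoneA}) plus Gronwall) uses only the energy-type identity. That identity is available for any regular solution, so uniqueness applies and the cocycle property follows. Continuity of $\Phi$, which is not needed for existence, was already obtained above.
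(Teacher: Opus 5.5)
Your proposal is correct and follows the paper's route: the paper's proof is simply the observation that the preceding Corollary supplies a weakly compact $\mathcal{D}$-pullback absorbing family $K\in\mathcal{D}$, so Theorem \ref{AttrExist} applies. Your extra checks are the routine verifications the paper leaves implicit, and you carry them out correctly: the cocycle property via uniqueness of regular solutions restarted at $\tau+s$, inclusion-closedness of $\mathcal{D}$, weak compactness of the balls $K(\tau)$, finiteness of $R(\tau)$, and $e^{\mu\tau}R(\tau)\to 0$.
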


In general, the radius $R(\tau)$ can be unbounded as $\tau\rightarrow\pm
\infty$. However, under an additional assumption on the function $h\left(
t\right)  $ we are able to force it to be bounded in either one or both
directions. The following result is straightforward to check.

\begin{lemma}
\label{AttrBoundedRandom}Suppose that (\ref{Diss})-(\ref{Cond2a}) and
(\ref{3.42}) hold. If, additionally,
\[
\sup_{t\leq t_{0}}e^{-\mu t}\int_{-\infty}^{t}e^{\mu r}\left\Vert
h(r)\right\Vert ^{2}dr<\infty
\]
for some $t_{0}\in\mathbb{R}$, then $\sup_{\tau\leq t}R(\tau)<\infty$ for any
$t\in\mathbb{R}$. Hence, the union $\cup_{\tau\leq t}\mathcal{A}(\tau)$ is
bounded for any $t\in\mathbb{R}$.

If
\[
\sup_{t\in\mathbb{R}}e^{-\mu t}\int_{-\infty}^{t}e^{\mu r}\left\Vert
h(r)\right\Vert ^{2}dr<\infty,
\]
then $\sup_{\tau\in\mathbb{R}}R(\tau)<\infty$. Hence, the union $\cup_{\tau
\in\mathbb{R}}\mathcal{A}(\tau)$ is bounded.
\end{lemma}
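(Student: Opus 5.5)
The plan is a direct computation with the explicit radius
\[
R(\tau)=M+Me^{-\mu\tau}\int_{-\infty}^{\tau}e^{\mu s}\Vert h(s)\Vert^{2}ds
\]
of the absorbing family $K$ from the Corollary. Afterwards we use that the attractor lies inside this absorbing family. Write $g(\tau)=e^{-\mu\tau}\int_{-\infty}^{\tau}e^{\mu s}\Vert h(s)\Vert^{2}ds$, so $R(\tau)=M+Mg(\tau)$. The whole question is therefore the boundedness of $g$ on $(-\infty,t]$ or on $\mathbb{R}$.

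For the first statement, fix $t\in\mathbb{R}$. If $t\leq t_{0}$, the hypothesis gives $\sup_{\tau\leq t}g(\tau)\leq\sup_{\tau\leq t_{0}}g(\tau)<\infty$ directly. If $t>t_{0}$, split $(-\infty,t]$ into $(-\infty,t_{0}]$, where the hypothesis applies, and $[t_{0},t]$. On the compact interval, for $\tau\in[t_{0},t]$ we estimate
\[
g(\tau)=e^{-\mu(\tau-t_{0})}g(t_{0})+e^{-\mu\tau}\int_{t_{0}}^{\tau}e^{\mu s}\Vert h(s)\Vert^{2}ds\leq g(t_{0})+\int_{t_{0}}^{t}\Vert h(s)\Vert^{2}ds.
\]
The right-hand side is finite by (\ref{h1}), since $h\in L_{loc}^{2}(\mathbb{R},H)$. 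Hence $\sup_{\tau\leq t}R(\tau)<\infty$. The second statement is immediate, because the hypothesis is exactly $\sup_{\tau\in\mathbb{R}}g(\tau)<\infty$.

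To pass to the attractor, we recall that $\mathcal{A}$ is given by (\ref{AttrCharac}) with the absorbing family $K$ of the Corollary. Each $\Phi(s,\tau-s)K(\tau-s)$ with $s$ large lies in $K(\tau)$, by the absorbing property applied to $K\in\mathcal{D}$ itself. Also, $K(\tau)$ is a closed ball of $L^{2}(\Omega,H)$, hence convex, closed and weakly closed. It follows that $\mathcal{A}(\tau)\subset K(\tau)$. Alternatively, this inclusion follows from minimality, since $K$ is weakly compact and attracting. Therefore $\cup_{\tau\leq t}\mathcal{A}(\tau)$ lies in the ball of radius $(\sup_{\tau\leq t}R(\tau))^{1/2}$, and similarly for the union over all of $\mathbb{R}$.

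There is no real obstacle here. The only points needing care are the splitting at $t_{0}$ when $t>t_{0}$, and the justification that $\mathcal{A}(\tau)\subset K(\tau)$, which uses the weak closedness of the ball.
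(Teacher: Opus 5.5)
Your proof is correct. The paper gives no written proof and only says the result is "straightforward to check", so your argument supplies exactly that routine verification. Splitting $(-\infty,t]$ at $t_{0}$ and using the bound $g(\tau)\leq g(t_{0})+\int_{t_{0}}^{t}\Vert h(s)\Vert^{2}ds$ on $[t_{0},t]$, together with the inclusion $\mathcal{A}(\tau)\subset K(\tau)$ (by minimality, or by the weak closedness of the ball in the characterization of the attractor), gives both statements.
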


\begin{corollary}
If $h$ does not depend on time, that is, $h(t)\equiv h_{0}\in H$, then the
union $\cup_{\tau\in\mathbb{R}}\mathcal{A}(\tau)$ is bounded.
\end{corollary}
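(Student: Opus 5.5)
The plan is to feed the constant forcing $h(t)\equiv h_{0}$ into the second part of Lemma \ref{AttrBoundedRandom}. We first have to make sure the absorbing-radius machinery applies at all, i.e. that (\ref{3.42}) holds. Note also that by Remark \ref{f} we may be working with $\overline{h}=h_{0}+f(0)$; this is again a constant element of $H$, so we simply rename it $h_{0}$.

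\textbf{Step 1: check (\ref{3.42}).} Fix any $\mu\in(0,2m\lambda_{1})$. Then
\[
\int_{-\infty}^{0}e^{\mu s}\Vert h_{0}\Vert^{2}ds=\frac{\Vert h_{0}\Vert^{2}}{\mu}<\infty .
\]
Hence the preceding theorem gives the weak $\mathcal{D}$-pullback mean random attractor $\mathcal{A}$. Moreover, $\mathcal{A}$ is obtained from the absorbing family $K$ of radius $R(\tau)$.

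\textbf{Step 2: compute the quantity in Lemma \ref{AttrBoundedRandom}.} For every $t\in\mathbb{R}$,
\[
e^{-\mu t}\int_{-\infty}^{t}e^{\mu r}\Vert h_{0}\Vert^{2}dr=\frac{\Vert h_{0}\Vert^{2}}{\mu}.
\]
This is independent of $t$, so the supremum over $t\in\mathbb{R}$ is finite. By the second statement of Lemma \ref{AttrBoundedRandom}, $\sup_{\tau\in\mathbb{R}}R(\tau)\le M+M\Vert h_{0}\Vert^{2}/\mu<\infty$.

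\textbf{Step 3: pass from $K$ to $\mathcal{A}$.} The one point that needs care is the inclusion $\mathcal{A}(\tau)\subset K(\tau)$, which is what the lemma uses implicitly. It follows from the characterization (\ref{AttrCharac}). Since $K$ is absorbing and $K\in\mathcal{D}$, we have $\Phi(s,\tau-s)K(\tau-s)\subset K(\tau)$ for all $s$ large enough. Also, $K(\tau)$ is a closed ball in $L^{2}(\Omega,H)$, hence convex, closed and therefore weakly closed. So each weak closure in (\ref{AttrCharac}) with $r$ large is contained in $K(\tau)$, and thus $\mathcal{A}(\tau)\subset K(\tau)$.

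Consequently
\[
\bigcup_{\tau\in\mathbb{R}}\mathcal{A}(\tau)\subset\Big\{u:\ E(\Vert u\Vert^{2})\le\sup_{\tau}R(\tau)\Big\},
\]
which is bounded. There is no real obstacle here: the argument is a direct verification, and the only subtle point is the weak closedness of the balls in Step 3.
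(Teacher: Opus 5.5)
Your proposal is correct and follows the paper's route. For constant $h_{0}$ the quantity $e^{-\mu t}\int_{-\infty}^{t}e^{\mu r}\Vert h_{0}\Vert^{2}dr=\Vert h_{0}\Vert^{2}/\mu$ does not depend on $t$, so the second part of Lemma \ref{AttrBoundedRandom} gives a uniform bound on $R$, and the inclusion $\mathcal{A}(\tau)\subset K(\tau)$ then bounds the union. You also justify that inclusion explicitly, via the weak closedness of the balls $K(\tau)$ in the characterization (\ref{AttrCharac}) (equivalently, by minimality, since $K$ is weakly compact and absorbing); the paper leaves this step implicit.
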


\section{The mean random attractor for a stochastic non-local problem}

We consider now the following stochastic nonlocal reaction-diffusion equation
\begin{equation}
\left\{
\begin{array}
[c]{l}%
du=(a(\Vert u\Vert_{V}^{2})\Delta u+f(u)+h(t,x))dt+\sigma\left(  u\right)
dw\left(  t\right)  \ \text{in }(\tau,\infty)\times\mathcal{O},\\
u=0\quad\text{on }(\tau,\infty)\times\partial\mathcal{O},\\
u(\tau,x)=u_{\tau}(x)\quad\text{for }x\in\mathcal{O},
\end{array}
\right.  \label{1}%
\end{equation}
where $\mathcal{O}$ is a bounded open set of $\mathbb{R}^{n}$ with smooth
boundary $\partial\mathcal{O},$ $w\left(  t\right)  $ is a two-sided scalar
Wiener process with respect to the filtration $\{\mathcal{F}_{t}%
\}_{t\in\mathbb{R}}$ and $(\Omega,\mathcal{F},\{\mathcal{F}_{t}\}_{t\in
\mathbb{R}},P)$ is a complete filtered probability space such that
$\{\mathcal{F}_{t}\}_{t\in\mathbb{R}}$ is a right continuous family of
sub-$\sigma$-algebras of $\mathcal{F}$ that contains all $P$-null sets. The
integral is understood in the It\^{o} sense.

The functions $f\in C^{1}(\mathbb{R})$, $a\in C(\mathbb{R}^{+}),$
$\sigma:\mathbb{R}\rightarrow\mathbb{R}$ and $h$ satisfy:%
\begin{equation}
f^{\prime}\left(  r\right)  \leq\gamma_{1},\label{f1}%
\end{equation}%
\begin{equation}
\left\vert f\left(  r\right)  \right\vert \leq\gamma_{2}\left(  1+\left\vert
r\right\vert \right)  ,\label{f2}%
\end{equation}%
\begin{equation}
f\left(  r\right)  r\leq\gamma_{3}+\gamma_{4}r^{2},\label{f3}%
\end{equation}%
\begin{equation}
0<m\leq a\left(  s\right)  \leq M,\ \forall s\geq0,\label{a1}%
\end{equation}%
\begin{equation}
s\mapsto a(s^{2})s\text{ is non-decreasing,}\label{a2}%
\end{equation}%
\begin{equation}
h\in L^{2}(\tau,T;H),\text{ for all }\tau<T,\label{h}%
\end{equation}%
\begin{equation}
\sigma\text{ is globally Lipschitz (with constant }C_{\sigma}\text{),}%
\label{sigma}%
\end{equation}
for some $\gamma_{i},m,M>0$ and all $s\geq0,\ r\in\mathbb{R}.$ Additionally,
we will need to assume that
\begin{equation}
\gamma_{4}+C_{\sigma}^{2}<\frac{m\lambda_{1}}{2}.\label{CondDiss}%
\end{equation}
Although some of these conditions are the same as in Section
\ref{AttractorRandom}, for clarity of exposition we prefer to write them here again.

Let $\widetilde{f}:H\rightarrow H$ be the Nemitsky operator given by
$\widetilde{f}\left(  u\right)  \left(  x\right)  =f\left(  u\left(  x\right)
\right)  $ for almost all $x\in\mathcal{O}$. We define the operator
$B:\mathbb{R}\times V\rightarrow V^{\ast}$ given by%
\[
\left\langle B\left(  t,u\right)  ,v\right\rangle _{V^{\ast},V}=-a(\Vert
u\Vert_{V}^{2})\left(  \nabla u,\nabla v\right)  +\left(  \widetilde{f}\left(
u\right)  +h(t),v\right)  .
\]
It is straigthforward to see using Lebesgue's theorem that the operator
$\widetilde{f}$ is continuous.

In the same way, let $\widetilde{\sigma}:H\rightarrow H$ be the Nemitsky
operator given by $\widetilde{\sigma}\left(  u\right)  \left(  x\right)
=\sigma\left(  u\left(  x\right)  \right)  $ for almost all $x\in\mathcal{O}$.
It is clear that $\widetilde{\sigma}$ is globally Lipschitz as well. Indeed,%
\begin{align}
\left\Vert \widetilde{\sigma}(u)-\widetilde{\sigma}(v)\right\Vert  &  =\left(
\int_{\mathcal{O}}(\sigma(u(x))-\sigma(v(x)))^{2}dx\right)  ^{\frac{1}{2}%
}\nonumber\\
&  \leq C_{\sigma}\left(  \int_{\mathcal{O}}(u(x)-v(x))^{2}dx\right)
^{\frac{1}{2}}=C_{\sigma}\left\Vert u-v\right\Vert .\label{Lipschitz}%
\end{align}

Under conditions (\ref{f1})-(\ref{sigma}) the following lemmas hold.

\begin{lemma}
\label{hemicont}$B$ is hemicontinuous.
\end{lemma}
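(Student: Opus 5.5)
We have to show that for fixed $t\in\mathbb{R}$ and $u,v,w\in V$ the real function
\[
\lambda\mapsto\langle B(t,u+\lambda v),w\rangle_{V^{\ast},V}=-a(\Vert u+\lambda v\Vert_{V}^{2})\left(\nabla(u+\lambda v),\nabla w\right)+\left(\widetilde{f}(u+\lambda v)+h(t),w\right)
\]
is continuous on $\mathbb{R}$. We fix a sequence $\lambda_{k}\to\lambda_{0}$, put $u_{k}=u+\lambda_{k}v$ and $u_{0}=u+\lambda_{0}v$, and observe that $u_{k}\to u_{0}$ strongly in $V$, hence also in $H$ by Poincar\'{e}. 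The term containing $h(t)$ does not depend on $\lambda$, so we treat the remaining two terms separately.

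\emph{Diffusion term.} Since $\Vert u_{k}\Vert_{V}^{2}\to\Vert u_{0}\Vert_{V}^{2}$ and $a\in C(\mathbb{R}^{+})$, we get $a(\Vert u_{k}\Vert_{V}^{2})\to a(\Vert u_{0}\Vert_{V}^{2})$. Also $(\nabla u_{k},\nabla w)=(\nabla u,\nabla w)+\lambda_{k}(\nabla v,\nabla w)\to(\nabla u_{0},\nabla w)$. The product of two convergent real sequences converges, which settles this term.

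\emph{Reaction term.} Here we use that $\widetilde{f}:H\to H$ is continuous, as stated just before the lemma. If one wants to verify this directly: by (\ref{f2}), $|f(r)|\le\gamma_{2}(1+|r|)$, so $\widetilde{f}$ maps $H$ into $H$. Given $u_{k}\to u_{0}$ in $H$, every subsequence has a further subsequence converging a.e. and dominated by some $g\in L^{2}(\mathcal{O})$. Continuity of $f$ together with Lebesgue's dominated convergence theorem applied to $|f(u_{k})-f(u_{0})|^{2}\le C(1+g^{2}+|u_{0}|^{2})$ then gives $\widetilde{f}(u_{k})\to\widetilde{f}(u_{0})$ in $H$, and the subsequence principle extends this to the whole sequence. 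Consequently $(\widetilde{f}(u_{k}),w)\to(\widetilde{f}(u_{0}),w)$.

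Combining the two terms yields continuity of $\lambda\mapsto\langle B(t,u+\lambda v),w\rangle$, i.e.\ hemicontinuity; in fact the argument shows $B(t,\cdot)$ is demicontinuous from $V$ to $V^{\ast}$. Nothing here is a real obstacle. The only step that needs care is the continuity of the Nemitsky operator, which needs the linear growth (\ref{f2}) for the domination and the subsequence argument to recover convergence of the whole sequence.
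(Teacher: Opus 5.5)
Your proof is correct and follows the same route as the paper. The paper simply invokes the continuity of $a$ and of the Nemytskii operator $\widetilde{f}:H\to H$ to conclude that $\lambda\mapsto\langle B(t,u+\lambda z),v\rangle_{V^{\ast},V}$ is continuous. Your dominated-convergence check of the continuity of $\widetilde{f}$ fills in the detail the paper calls straightforward, and you also keep the correct minus sign in front of $a$, which the paper's displayed formula drops.
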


\begin{proof}
Since $a,\widetilde{f}$ are continuous, for any $u,v,z\in V$ we have that the
function%
\[
\lambda\mapsto\left\langle B\left(  t,u+\lambda z\right)  ,v\right\rangle
_{V^{\ast},V}=a(\Vert u+\lambda z\Vert_{V}^{2})\left(  \left(  \nabla u,\nabla
v\right)  +\lambda\left(  \nabla z,\nabla v\right)  \right)  +\left(
\widetilde{f}\left(  u+\lambda z\right)  +h(t),v\right)
\]
is continuous. Hence, $B$ is hemicontinuous.
\end{proof}

\bigskip

\begin{lemma}
\label{monotone}$B$ is weakly monotone, that is, there exists $c>0$ such that%
\[
2\left\langle B\left(  t,u\right)  -B(t,v),u-v\right\rangle _{V^{\ast}%
,V}+\left\Vert \sigma\left(  u\right)  -\sigma\left(  v\right)  \right\Vert
_{H}^{2}\leq c\left\Vert u-v\right\Vert ^{2}\text{ }\forall u,v\in V.
\]

\end{lemma}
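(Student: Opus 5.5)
We split $2\langle B(t,u)-B(t,v),u-v\rangle_{V^{\ast},V}$ into the non-local diffusion part and the reaction part, and bound the noise term with (\ref{Lipschitz}); the forcing $h(t)$ cancels in the difference. Concretely,
\[
\langle B(t,u)-B(t,v),u-v\rangle_{V^{\ast},V}=-\left(a(\Vert u\Vert_{V}^{2})\nabla u-a(\Vert v\Vert_{V}^{2})\nabla v,\nabla u-\nabla v\right)+\left(\widetilde{f}(u)-\widetilde{f}(v),u-v\right).
\]

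For the diffusion term we repeat the computation (\ref{MonotoneA}) from the uniqueness proof in Section \ref{AttractorRandom}. Expand the product and use Cauchy--Schwarz, $(\nabla u,\nabla v)\leq\Vert u\Vert_{V}\Vert v\Vert_{V}$, which is allowed because the coefficients $a(\cdot)$ are positive by (\ref{a1}). This gives
\[
\left(a(\Vert u\Vert_{V}^{2})\nabla u-a(\Vert v\Vert_{V}^{2})\nabla v,\nabla u-\nabla v\right)\geq\left(a(\Vert u\Vert_{V}^{2})\Vert u\Vert_{V}-a(\Vert v\Vert_{V}^{2})\Vert v\Vert_{V}\right)\left(\Vert u\Vert_{V}-\Vert v\Vert_{V}\right)\geq0.
\]
The last inequality is the monotonicity assumption (\ref{a2}) applied to $s\mapsto a(s^{2})s$ at $s=\Vert u\Vert_{V}$ and $s=\Vert v\Vert_{V}$. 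So the diffusion contribution to $2\langle B(t,u)-B(t,v),u-v\rangle$ is nonpositive and can be dropped.

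For the reaction term, the mean value theorem and (\ref{f1}) give, pointwise in $\mathcal{O}$, $(f(u(x))-f(v(x)))(u(x)-v(x))=f'(\theta(x))(u(x)-v(x))^{2}\leq\gamma_{1}(u(x)-v(x))^{2}$. This requires $f\in C^{1}$. Integrating over $\mathcal{O}$ yields $\left(\widetilde{f}(u)-\widetilde{f}(v),u-v\right)\leq\gamma_{1}\Vert u-v\Vert^{2}$. All integrals are finite since $u,v\in V\subset H$ and $\widetilde{f}$ maps $H$ into $H$ by (\ref{f2}).

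For the noise, (\ref{Lipschitz}) gives $\Vert\widetilde{\sigma}(u)-\widetilde{\sigma}(v)\Vert^{2}\leq C_{\sigma}^{2}\Vert u-v\Vert^{2}$. Adding the three bounds shows the inequality holds with $c=2\gamma_{1}+C_{\sigma}^{2}$.

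The only step needing care is the diffusion term. The non-local coefficient is not monotone in $\nabla u$ pointwise, so monotonicity must come from the reduction to the scalar function $s\mapsto a(s^{2})s$. That reduction is already carried out in (\ref{MonotoneA}), so I expect no real obstacle here.
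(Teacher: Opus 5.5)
Your proposal is correct and follows the same route as the paper. The paper also gets the diffusion term from the computation (\ref{MonotoneA}) via (\ref{a2}), bounds the reaction term through the mean value theorem and (\ref{f1}), and controls the noise with (\ref{Lipschitz}), which yields $c=2\gamma_{1}+C_{\sigma}^{2}$.
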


\begin{proof}
By (\ref{f1}) we obtain that%
\begin{align*}
\left(  \widetilde{f}\left(  u\right)  -\widetilde{f}\left(  v\right)
,u-v\right)   &  =\int_{\mathcal{O}}f^{\prime}\left(  \alpha\left(  x\right)
u\left(  x\right)  +\left(  1-\alpha\left(  x\right)  \right)  v\left(
x\right)  \right)  \left(  u\left(  x\right)  -v\left(  x\right)  \right)
^{2}dx\\
&  \leq\gamma_{1}\left\Vert u-v\right\Vert ^{2}.
\end{align*}
Hence, the result follows from (\ref{MonotoneA}) and (\ref{Lipschitz}).
\end{proof}

\bigskip

\begin{lemma}
\label{coercive}$B$ is coercive, that is, there are $c_{1}\geq0,\ c_{2}>0$
such that%
\[
2\left\langle B\left(  t,u\right)  ,u\right\rangle _{V^{\ast},V}+\left\Vert
\widetilde{\sigma}\left(  u\right)  \right\Vert _{H}^{2}\leq c_{1}\left\Vert
u\right\Vert ^{2}-c_{2}\left\Vert u\right\Vert _{V}^{2}+g(t),
\]
where $g\in L^{1}(\tau,T)$ for all $\tau<T.$
\end{lemma}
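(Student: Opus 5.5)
We expand $\left\langle B(t,u),u\right\rangle _{V^{\ast},V}$ directly from the definition of $B$ and estimate each term separately, exactly as in the monotonicity argument of Lemma \ref{monotone}. For $u\in V$ we have
\[
2\left\langle B(t,u),u\right\rangle _{V^{\ast},V}=-2a(\Vert u\Vert_{V}^{2})\Vert u\Vert_{V}^{2}+2(\widetilde{f}(u),u)+2(h(t),u).
\]
The diffusion term is bounded using (\ref{a1}): $-2a(\Vert u\Vert_{V}^{2})\Vert u\Vert_{V}^{2}\leq-2m\Vert u\Vert_{V}^{2}$. For the reaction term we integrate (\ref{f3}) pointwise over $\mathcal{O}$, obtaining $2(\widetilde{f}(u),u)\leq2\gamma_{3}|\mathcal{O}|+2\gamma_{4}\Vert u\Vert^{2}$.

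For the forcing term we use Young's inequality together with the Poincar\'{e} inequality $\lambda_{1}\Vert u\Vert^{2}\leq\Vert u\Vert_{V}^{2}$. This gives
\[
2(h(t),u)\leq\frac{m}{2}\Vert u\Vert_{V}^{2}+\frac{2}{m\lambda_{1}}\Vert h(t)\Vert^{2},
\]
so part of the dissipation absorbs it. Alternatively we may simply bound it by $\Vert u\Vert^{2}+\Vert h(t)\Vert^{2}$, since $c_{1}$ is allowed to be any nonnegative constant.

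For the noise term, (\ref{sigma}) gives $|\sigma(r)|\leq|\sigma(0)|+C_{\sigma}|r|$. Hence
\[
\Vert\widetilde{\sigma}(u)\Vert^{2}\leq2C_{\sigma}^{2}\Vert u\Vert^{2}+2\sigma(0)^{2}|\mathcal{O}|.
\]
This is the same Nemitsky estimate as (\ref{Lipschitz}), now taken with $v=0$.

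Summing the four bounds yields the claim with
\[
c_{1}=2\gamma_{4}+2C_{\sigma}^{2},\qquad c_{2}=\tfrac{3m}{2},\qquad g(t)=2\gamma_{3}|\mathcal{O}|+2\sigma(0)^{2}|\mathcal{O}|+\tfrac{2}{m\lambda_{1}}\Vert h(t)\Vert^{2}.
\]
By (\ref{h}), $g\in L^{1}(\tau,T)$ for all $\tau<T$.

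There is no real obstacle in this argument. The only point needing care is keeping $c_{2}>0$ after absorbing the $h$ term. It also helps to record the constants explicitly, because (\ref{CondDiss}) will later be used to make $c_{1}-c_{2}\lambda_{1}$ negative (indeed $2\gamma_{4}+2C_{\sigma}^{2}<m\lambda_{1}$) for the absorbing estimates. Note that the coercivity statement itself does not require (\ref{CondDiss}).
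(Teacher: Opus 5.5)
Your proof is correct and is essentially the paper's argument. You use the same term-by-term bounds from (\ref{a1}), (\ref{f3}) and the Lipschitz bound on $\widetilde{\sigma}$, with one difference: you absorb $2(h(t),u)$ into the dissipation via Poincar\'{e}, giving $c_{2}=\tfrac{3m}{2}$, whereas the paper bounds it by $\Vert u\Vert^{2}+\Vert h(t)\Vert^{2}$, keeps $c_{2}=2m$ and adds $1$ to $c_{1}$ --- precisely the alternative you mention. Your coefficient $2C_{\sigma}^{2}$ in $c_{1}$ is the correct one; the paper's ``$2C_{\sigma}$'' is a typo.
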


\begin{proof}
In view of (\ref{a1}), (\ref{f3}) and (\ref{sigma}) we have the inequalities%
\[
-a(\Vert u\Vert_{V}^{2})\left(  \nabla u,\nabla u\right)  \leq-m\left\Vert
u\right\Vert _{V}^{2},
\]%
\[
\left(  \widetilde{f}\left(  u\right)  ,u\right)  \leq\gamma_{3}\left\vert
\mathcal{O}\right\vert +\gamma_{4}\left\Vert u\right\Vert ^{2},
\]%
\[
\left\Vert \widetilde{\sigma}\left(  u\right)  \right\Vert _{H}^{2}\leq\left(
\left\Vert \widetilde{\sigma}\left(  0\right)  \right\Vert _{H}+C_{\sigma
}\left\Vert u\right\Vert \right)  ^{2},
\]%
\[
(h(t),u)\leq\left\Vert h(t)\right\Vert \left\Vert u\right\Vert \leq\frac{1}%
{2}\left\Vert h(t)\right\Vert ^{2}+\frac{1}{2}\left\Vert u\right\Vert ^{2},
\]
which imply the result by putting $g(t)=2\gamma_{3}\left\vert \mathcal{O}%
\right\vert +2\left\Vert \widetilde{\sigma}\left(  0\right)  \right\Vert
_{H}^{2}+\left\Vert h(t)\right\Vert ^{2}$, $c_{1}=2\gamma_{4}+1+2C_{\sigma}$,
$c_{2}=2m$.
\end{proof}

\bigskip

\begin{lemma}
\label{Abounded}The operator $B$ is bounded, that is, there are $d_{1}%
,d_{2}\geq0$ such that%
\[
\left\Vert B(t,u)\right\Vert _{V^{\ast}}\leq d_{1}+d_{2}\left\Vert
u\right\Vert _{V}+g(t)\text{ }\forall u\in V,
\]
where $g\in L^{2}(\tau,T)$ for all $\tau<T.$
\end{lemma}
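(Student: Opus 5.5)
The plan is to estimate the three pieces of $B(t,u)$ separately in $V^{\ast}$ by duality, testing against an arbitrary $v\in V$ with $\left\Vert v\right\Vert _{V}\leq1$.

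\emph{Diffusion term.} By (\ref{a1}) and the Cauchy--Schwarz inequality,
\[
\left\vert a(\Vert u\Vert_{V}^{2})\left(  \nabla u,\nabla v\right)
\right\vert \leq M\left\Vert u\right\Vert _{V}\left\Vert v\right\Vert _{V}.
\]

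\emph{Reaction term.} I will use the linear growth condition (\ref{f2}) rather than any $L^{p}$ bound. It gives
\[
\Vert\widetilde{f}(u)\Vert\leq\gamma_{2}\left(  \left\vert \mathcal{O}%
\right\vert ^{1/2}+\left\Vert u\right\Vert \right)  .
\]
Combining this with the Poincar\'{e} inequality $\left\Vert v\right\Vert \leq\lambda_{1}^{-1/2}\left\Vert v\right\Vert _{V}$ and $\left\Vert u\right\Vert \leq\lambda_{1}^{-1/2}\left\Vert u\right\Vert _{V}$, I get
\[
\left\vert (\widetilde{f}(u),v)\right\vert \leq\gamma_{2}\lambda_{1}%
^{-1/2}\left(  \left\vert \mathcal{O}\right\vert ^{1/2}+\lambda_{1}%
^{-1/2}\left\Vert u\right\Vert _{V}\right)  \left\Vert v\right\Vert _{V}.
\]

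\emph{Forcing term.} Similarly,
\[
\left\vert (h(t),v)\right\vert \leq\lambda_{1}^{-1/2}\left\Vert h(t)\right\Vert
\left\Vert v\right\Vert _{V}.
\]

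Taking the supremum over $\left\Vert v\right\Vert _{V}\leq1$ and adding the three bounds yields the statement with
\[
d_{1}=\gamma_{2}(\left\vert \mathcal{O}\right\vert /\lambda_{1})^{1/2},\qquad d_{2}=M+\gamma_{2}/\lambda_{1},\qquad g(t)=\lambda_{1}^{-1/2}\left\Vert h(t)\right\Vert .
\]
Assumption (\ref{h}) gives $g\in L^{2}(\tau,T)$ for all $\tau<T$.

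I do not expect any real obstacle; the argument is a direct computation. The only point needing care is to rely on (\ref{f2}) so that $\widetilde{f}$ maps $H$ into $H$ with linear growth. The earlier polynomial assumption (\ref{Growth}) would not give a bound in terms of $\left\Vert u\right\Vert _{V}$ alone in arbitrary dimension.
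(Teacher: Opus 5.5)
Your proof is correct and follows essentially the same route as the paper: the diffusion term is bounded by $M\Vert u\Vert_V$ via (\ref{a1}), and the reaction and forcing terms are controlled through the embedding $H\subset V^{\ast}$ together with the linear growth (\ref{f2}). The only difference is cosmetic: you make the embedding constant explicit as $\lambda_1^{-1/2}$ via Poincar\'{e}'s inequality and use Minkowski's inequality, where the paper leaves the constant $\widetilde{c}$ unspecified and uses the bound $\sqrt{2\vert\mathcal{O}\vert+2\Vert u\Vert^{2}}$.
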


\begin{proof}
It follows from (\ref{a1}) and (\ref{f2}) that%
\begin{align*}
\left\Vert B(t,u)\right\Vert _{V^{\ast}} &  \leq M\left\Vert u\right\Vert
_{V}+\left\Vert \widetilde{f}\left(  u\right)  +h(t)\right\Vert _{V^{\ast}}+\\
&  \leq M\left\Vert u\right\Vert _{V}+\widetilde{c}\left(  \left\Vert
\widetilde{f}\left(  u\right)  \right\Vert +\left\Vert h(t)\right\Vert
\right)  \\
&  \leq M\left\Vert u\right\Vert _{V}+\widetilde{c}\gamma_{2}\sqrt{2\left\vert
\mathcal{O}\right\vert +2\left\Vert u\right\Vert ^{2}}+\widetilde{c}\left\Vert
h(t)\right\Vert \\
&  \leq d_{1}+d_{2}\left\Vert u\right\Vert _{V}+g(t).
\end{align*}

\end{proof}

\bigskip

We will focus first on the existence and uniqueness of solutions to problem
(\ref{1}).

\begin{definition}
For $\tau\in\mathbb{R}$ and $u_{\tau}\in L^{2}(\Omega,\mathcal{F}_{\tau};H),$
an $H$-valued $\{\mathcal{F}_{t}\}_{t\in\mathbb{R}}$-adapted stochastic
process $u$ is called a solution to problem (\ref{1}) if $u\in C([\tau
,+\infty),H)\cap L_{loc}^{2}(\tau,+\infty;V)$ P-a.s. and satisfies the
equality%
\begin{align*}
&  (u(t),\xi)+\int_{\tau}^{t}a(\left\Vert u\right\Vert _{V}^{2})(\nabla
u,\nabla\xi)ds\\
&  =(u_{\tau},\xi)+\int_{\tau}^{t}(\widetilde{f}(u(s))+h(s),\xi)ds+\int_{\tau
}^{t}(\widetilde{\sigma}(u(s)),\xi)dw(s),\ \forall\ \tau<t,
\end{align*}
P-almost sure for all $\xi\in V.$
\end{definition}

From Lemmas \ref{hemicont}-\ref{Abounded} and Theorems 4.2.4 and 4.2.5 in
\cite{PrevotRockner} we obtain that a unique solution exists and that it
satisfies the It\^{o} formula.

\begin{lemma}
Assume (\ref{f1})-(\ref{sigma}). Then for any $u_{\tau}\in L^{2}%
(\Omega,\mathcal{F}_{\tau};H)$ there exists a unique solution $u$ to problem
(\ref{1}) which, moreover, satisfies%
\begin{equation}
\mathbb{E}\left(  \sup_{t\in\lbrack\tau,\tau+T]}\left\Vert u(t)\right\Vert
^{2}\right)  <\infty,\label{ExpectBounded}%
\end{equation}%
\begin{align}
&  \left\Vert u(t)\right\Vert ^{2}+2\int_{\tau}^{t}a(\left\Vert u\right\Vert
_{V}^{2})\left\Vert \nabla u\right\Vert ^{2}ds\label{Ito}\\
&  =\left\Vert u(\tau)\right\Vert ^{2}+2\int_{\tau}^{t}\left(  \widetilde{f}%
(u(s))+h(s),u(s)\right)  ds+\int_{\tau}^{t}\left\Vert \widetilde{\sigma
}(u(s))\right\Vert ^{2}ds+2\int_{\tau}^{t}(\widetilde{\sigma}%
(u(s)),u(s))dw(s)\ \text{P-a.s., }\nonumber
\end{align}
for all $\tau<T.$
\end{lemma}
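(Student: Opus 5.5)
The plan is to reduce the statement to the variational framework of \cite{PrevotRockner}, Theorems 4.2.4 and 4.2.5, for the Gelfand triple $V\subset H\subset V^{\ast}$. Write (\ref{1}) as
\[
du=B(t,u)\,dt+\widetilde{\sigma}(u)\,dw(t),
\]
with the one-dimensional noise regarded as a Hilbert--Schmidt valued coefficient $u\mapsto\widetilde{\sigma}(u)\in L_{2}(\mathbb{R},H)\cong H$. The hypotheses of those theorems are then checked one by one.

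\textbf{Step 1: the structural conditions.}
\begin{itemize}
\item Hemicontinuity (H1) is Lemma \ref{hemicont}.
\item Weak monotonicity (H2) is Lemma \ref{monotone}. There the $\sigma$ term is controlled through (\ref{Lipschitz}).
\item Coercivity (H3) is Lemma \ref{coercive}, with exponent $\alpha=2$.
\item Boundedness (H4) is Lemma \ref{Abounded}, again with $\alpha=2$.
\end{itemize}
In Lemmas \ref{coercive} and \ref{Abounded} the inhomogeneity $g$ is only integrable on bounded intervals, by (\ref{h}). This is allowed, since the theory in \cite{PrevotRockner} is applied on each finite interval $[\tau,\tau+T]$ with $g$ an adapted (here deterministic) process in $L^{1}$.

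The initial datum $u_{\tau}\in L^{2}(\Omega,\mathcal{F}_{\tau};H)$ has the required integrability and measurability. Since the filtration starts at time $\tau$, we shift time by $t\mapsto t-\tau$ and use the shifted Wiener process $w(t)-w(\tau)$, which is a standard Brownian motion with respect to $\{\mathcal{F}_{t}\}_{t\geq\tau}$.

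\textbf{Step 2: existence, uniqueness and the two properties.} Theorem 4.2.4 yields a unique solution on every $[\tau,\tau+T]$. It is continuous in $H$, lies in $L^{2}(\tau,\tau+T;V)$ $P$-a.s., and is adapted. It also satisfies the moment bound (\ref{ExpectBounded}). Uniqueness on each interval lets the solutions for different $T$ be patched together consistently, which gives a global solution on $[\tau,\infty)$. Identity (\ref{Ito}) is the It\^{o} formula for $\Vert u\Vert^{2}$ (Theorem 4.1.2, as used in Theorem 4.2.5). It applies because $u\in L^{2}(\Omega\times(\tau,T);V)$ and the drift lies in $L^{2}(\Omega\times(\tau,T);V^{\ast})$, the latter by Lemma \ref{Abounded}. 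The pairing $\langle B(s,u),u\rangle$ then equals $-a(\Vert u\Vert_{V}^{2})\Vert\nabla u\Vert^{2}+(\widetilde{f}(u)+h,u)$, which is exactly the form in (\ref{Ito}).

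\textbf{Main obstacle.} The delicate point is measurability of the drift. We must check that $B$ is progressively measurable as a map $[\tau,T]\times V\rightarrow V^{\ast}$. This follows because $t\mapsto h(t)$ is measurable and $u\mapsto B(t,u)$ is demicontinuous. Demicontinuity is a consequence of hemicontinuity together with monotonicity, and can also be seen directly from the continuity of $a$ and of $\widetilde{f}$.

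A second point needs care: the coefficients of the monotonicity inequality must be the ones required in \cite{PrevotRockner}. The non-local term contributes a nonnegative quantity by (\ref{MonotoneA}), so it only helps, and the remaining terms are Lipschitz in $H$. With these checks the cited theorems apply directly.
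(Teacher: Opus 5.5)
Your proposal is correct and is essentially the paper's own argument: the paper's proof consists of verifying the hypotheses via Lemmas \ref{hemicont}--\ref{Abounded} and invoking Theorems 4.2.4 and 4.2.5 of \cite{PrevotRockner}, and your additional checks (time shift to a standard Brownian motion, patching over intervals, measurability of $B$) are routine details the paper leaves implicit. One caveat applies to both arguments: uniqueness in \cite{PrevotRockner} holds only among solutions satisfying $\mathbb{E}\int_{\tau}^{\tau+T}\Vert u\Vert_{V}^{2}ds<\infty$, whereas the paper's definition only requires $u\in L^{2}_{loc}(\tau,\infty;V)$ P-a.s., so a standard stopping-time localization is tacitly needed to get uniqueness in that larger class.
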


By using (\ref{ExpectBounded}), $u\in C([\tau,+\infty),H)$ P-a.s. and the
Lebesgue theorem we obtain that $u\in C([\tau,\infty),L^{2}(\Omega,H))$, so we
define the map $\Phi:\mathbb{R}^{+}\times\mathbb{R}\times L^{2}(\Omega
,H)\rightarrow L^{2}(\Omega,H)$ by
\[
\Phi(t,\tau,u_{\tau})=u(t+\tau),
\]
where $u$ is the unique solution to (\ref{1}) with $u(\tau)=u_{\tau}$. By the
uniqueness of solutions, this family of mappings is a mean random dynamical
system on $L^{2}(\Omega,\mathcal{F};H)$ over $(\Omega,\mathcal{F}%
,\{\mathcal{F}_{t}\}_{t\in\mathbb{R}},\mathbb{P})$ in the sense of Definition
\ref{MRDS2}.

We start with an a priori estimate.

\begin{lemma}
\label{Est1}Assume (\ref{f1})-(\ref{CondDiss}). Then there are constants
$K_{1},K_{2}>0$ such that for any $u_{\tau}\in L^{2}(\Omega,\mathcal{F}_{\tau
};H)$ the solution $u$ satisfies the estimate%
\[
\mathbb{E}(\left\Vert u(t)\right\Vert ^{2})\leq e^{-\omega_{0}(t-\tau
)}\mathbb{E}(\left\Vert u_{\tau}\right\Vert ^{2})+\frac{K_{1}}{\omega_{0}%
}+K_{2}\int_{\tau}^{t}e^{-\omega_{0}(t-r)}\left\Vert h(r)\right\Vert
^{2}dr\text{, for all }\tau<t,
\]
for any $0<\omega_{0}<m\lambda_{1}-\gamma_{4}-C_{\sigma}^{2}$.
\end{lemma}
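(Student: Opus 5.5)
We take expectations in the Itô formula (\ref{Ito}) and then apply a Gronwall-type argument with an exponential weight. The key point is that the stochastic integral $2\int_{\tau}^{t}(\widetilde{\sigma}(u(s)),u(s))dw(s)$ has zero expectation. This holds because, by (\ref{ExpectBounded}) and the linear growth $\Vert\widetilde{\sigma}(u)\Vert\leq\Vert\widetilde{\sigma}(0)\Vert+C_{\sigma}\Vert u\Vert$, the integrand satisfies $\mathbb{E}\int_{\tau}^{t}(\widetilde{\sigma}(u),u)^{2}ds<\infty$, so the integral is a square-integrable martingale. If this integrability were not directly available, we would localize with stopping times $\tau_{N}=\inf\{t:\Vert u(t)\Vert>N\}$ and pass to the limit via (\ref{ExpectBounded}) and dominated convergence.

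From the identity for $y(t)=\mathbb{E}\Vert u(t)\Vert^{2}$, which is absolutely continuous in $t$, we obtain for a.e. $t$
\[
y'(t)=-2\mathbb{E}\big(a(\Vert u\Vert_{V}^{2})\Vert u\Vert_{V}^{2}\big)+2\mathbb{E}(\widetilde{f}(u)+h,u)+\mathbb{E}\Vert\widetilde{\sigma}(u)\Vert^{2}.
\]
We estimate each term as follows:
\begin{itemize}
\item By (\ref{a1}) and Poincar\'{e}, $-2a\Vert u\Vert_{V}^{2}\leq-2m\lambda_{1}\Vert u\Vert^{2}$.
\item By (\ref{f3}), $2(\widetilde{f}(u),u)\leq2\gamma_{3}|\mathcal{O}|+2\gamma_{4}\Vert u\Vert^{2}$.
\item By Lipschitz continuity and Young, $\Vert\widetilde{\sigma}(u)\Vert^{2}\leq(1+\epsilon)C_{\sigma}^{2}\Vert u\Vert^{2}+(1+\epsilon^{-1})\Vert\widetilde{\sigma}(0)\Vert^{2}$.
\item By Young, $2(h,u)\leq\epsilon\Vert u\Vert^{2}+\epsilon^{-1}\Vert h\Vert^{2}$.
\end{itemize}
Collecting these gives
\[
y'\leq-(2m\lambda_{1}-2\gamma_{4}-(1+\epsilon)C_{\sigma}^{2}-\epsilon)y+K_{1}+K_{2}\Vert h(t)\Vert^{2}.
\]
For $\omega_{0}<m\lambda_{1}-\gamma_{4}-C_{\sigma}^{2}$ we have $2m\lambda_{1}-2\gamma_{4}-C_{\sigma}^{2}>\omega_{0}$ with room to spare. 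Indeed, the admissible range of $\omega_{0}$ is even smaller than what this estimate allows, and condition (\ref{CondDiss}) guarantees that the range is nonempty. So we choose $\epsilon>0$ small enough that the coefficient is at least $\omega_{0}$. The constants $K_{1}=2\gamma_{3}|\mathcal{O}|+(1+\epsilon^{-1})\Vert\widetilde{\sigma}(0)\Vert^{2}$ and $K_{2}=\epsilon^{-1}$ then depend on $\omega_{0}$ only through $\epsilon$, which matches the statement.

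Finally, multiplying by $e^{\omega_{0}t}$ and integrating over $[\tau,t]$ gives
\[
y(t)\leq e^{-\omega_{0}(t-\tau)}y(\tau)+K_{1}\int_{\tau}^{t}e^{-\omega_{0}(t-r)}dr+K_{2}\int_{\tau}^{t}e^{-\omega_{0}(t-r)}\Vert h(r)\Vert^{2}dr.
\]
Bounding the middle term by $K_{1}/\omega_{0}$ yields the claimed estimate.

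The main obstacle is justifying that the martingale term vanishes in expectation and that $y$ is absolutely continuous, so that the differential inequality may be used. The latter follows because, after taking expectations in (\ref{Ito}), every remaining term is a Lebesgue integral in $s$ with an integrable integrand, thanks to (\ref{ExpectBounded}) and $u\in L^{2}(\tau,T;V)$ in mean, the latter obtained from the same energy identity. Alternatively, one can avoid differentiation altogether by applying Itô's formula directly to $e^{\omega_{0}t}\Vert u(t)\Vert^{2}$ and taking expectations.
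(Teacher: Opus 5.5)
Your proof is correct and follows the paper's route. You take expectations in (\ref{Ito}), bound the terms with (\ref{a1}) and Poincar\'e, (\ref{f3}), Young for $h$ and the Lipschitz bound on $\widetilde{\sigma}$, then multiply the linear differential inequality by $e^{\omega_{0}t}$ and integrate; only the bookkeeping of $\epsilon$ differs, and since your coefficient has room to spare, one fixed $\epsilon$ serves every admissible $\omega_{0}$, so $K_{1},K_{2}$ can be taken independent of $\omega_{0}$, as the statement's quantifier order indicates. One correction to your justification for dropping the stochastic integral (a step the paper takes silently): (\ref{ExpectBounded}) does not give $\mathbb{E}\int_{\tau}^{t}(\widetilde{\sigma}(u),u)^{2}ds<\infty$, because that integrand is of order $\Vert u\Vert^{4}$, so it is your stopping-time localization, or Burkholder--Davis--Gundy via $\mathbb{E}(\int_{\tau}^{t}(\widetilde{\sigma}(u),u)^{2}ds)^{1/2}\leq C(1+\mathbb{E}\sup_{[\tau,t]}\Vert u\Vert^{2})$, that actually makes the martingale term vanish in expectation.
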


\begin{proof}
Taking expectations in (\ref{Ito}) we have%
\begin{align*}
&  \mathbb{E}\left(  \left\Vert u(r)\right\Vert ^{2}\right)  +2\int_{\tau}%
^{r}\mathbb{E}\left(  a(\left\Vert \nabla u\right\Vert ^{2})\left\Vert \nabla
u\right\Vert ^{2}\right)  ds\\
&  =\mathbb{E}\left(  \left\Vert u(\tau)\right\Vert ^{2}\right)  +2\int_{\tau
}^{r}\mathbb{E}\left(  \widetilde{f}(u(s))+h(s),u(s)\right)  ds+\int_{\tau
}^{r}\mathbb{E}\left(  \left\Vert \widetilde{\sigma}(u(s))\right\Vert
^{2}\right)  ds\ \text{ for }r\geq\tau.
\end{align*}
Thus, for a.a. $r>\tau,$%
\[
\frac{d}{dr}\mathbb{E}\left(  \left\Vert u(r)\right\Vert ^{2}\right)
+2\mathbb{E}\left(  a(\left\Vert \nabla u\right\Vert ^{2})\left\Vert \nabla
u(r)\right\Vert ^{2}\right)  =2\mathbb{E}\left(  \widetilde{f}%
(u(r))+h(r),u(r)\right)  +\mathbb{E}\left(  \left\Vert \widetilde{\sigma
}(u(r))\right\Vert ^{2}\right)  .
\]

We estimate each term by using (\ref{f3}), (\ref{h}) and (\ref{sigma}):%
\[
\left(  \widetilde{f}(u(r)),u(r)\right)  \leq\gamma_{3}\left\vert
\mathcal{O}\right\vert +\gamma_{4}\left\Vert u(r)\right\Vert ^{2},
\]%
\[
\left(  h(r),u(r)\right)  \leq\varepsilon\left(  m\lambda_{1}-\gamma
_{4}-C_{\sigma}^{2}\right)  \left\Vert u(r)\right\Vert ^{2}+\frac
{1}{4\varepsilon\left(  m\lambda_{1}-\gamma_{4}-C_{\sigma}^{2}\right)
}\left\Vert h(r)\right\Vert ^{2},
\]%
\begin{align*}
\left\Vert \widetilde{\sigma}(u(s))\right\Vert ^{2} &  \leq\left(  \left\Vert
\widetilde{\sigma}\left(  0\right)  \right\Vert _{H}+C_{\sigma}\left\Vert
u\right\Vert \right)  ^{2}\\
&  \leq2\left\Vert \widetilde{\sigma}\left(  0\right)  \right\Vert _{H}%
^{2}+2C_{\sigma}^{2}\left\Vert u\right\Vert ^{2},
\end{align*}
where $\varepsilon\in(0,1)$. Hence,%
\begin{align*}
&  \frac{d}{dr}\mathbb{E}\left(  \left\Vert u(r)\right\Vert ^{2}\right)
+2(1-\varepsilon)(m\lambda_{1}-\gamma_{4}-C_{\sigma}^{2})\mathbb{E}\left(
\left\Vert u(r)\right\Vert ^{2}\right)  \\
&  \leq2C_{\sigma}^{2}\left\Vert \widetilde{\sigma}\left(  0\right)
\right\Vert _{H}^{2}+\frac{1}{2\varepsilon\left(  m\lambda_{1}-2\gamma
_{4}-2C_{\sigma}^{2}\right)  }\left\Vert h(r)\right\Vert ^{2}+2\gamma
_{3}\left\vert \mathcal{O}\right\vert \\
&  =K_{1}+K_{2}\left\Vert h(r)\right\Vert ^{2}.
\end{align*}
Thus, by the Gronwall lemma,%
\[
\mathbb{E}(\left\Vert u(t)\right\Vert ^{2})\leq e^{-\omega_{0}(t-\tau
)}\mathbb{E}(\left\Vert u_{\tau}\right\Vert ^{2})+\frac{K_{1}}{\omega_{0}%
}+K_{2}\int_{\tau}^{t}e^{-\omega_{0}(t-r)}\left\Vert h(r)\right\Vert ^{2}dr,
\]
for any $0<\omega_{0}<m\lambda_{1}-\gamma_{4}-C_{\sigma}^{2}$.
\end{proof}

\bigskip

Further, let us consider the following condition: for some $0<\omega
_{0}<m\lambda_{1}-\gamma_{4}-C_{\sigma}^{2}$ the function $h$ satisfies that
\begin{equation}
\int_{-\infty}^{t}e^{\omega_{0}r}\left\Vert h(r)\right\Vert ^{2}%
dr<\infty\text{ for all }t\in\mathbb{R}.\label{IntCond}%
\end{equation}

Fixing the constant $\omega_{0}$ from (\ref{IntCond}) we denote by
$\mathcal{D}$ the collection of all families of non-empty bounded subsets
$D=\{D(\tau):\tau\in\mathbb{R}\}$, $D(\tau)\subset L^{2}(\Omega,\mathcal{F}%
_{\tau};H)$, such that%
\[
\lim_{\tau\rightarrow-\infty}e^{\omega_{0}\tau}\left\Vert D(\tau)\right\Vert
_{+}^{2}=0,
\]
where $\left\Vert D(\tau)\right\Vert _{+}=\sup_{y\in D(\tau)}\left\Vert
y\right\Vert _{L^{2}(\Omega,\mathcal{F}_{\tau};H)}.$

\begin{lemma}
\label{Est2}Assume (\ref{f1})-(\ref{CondDiss}) and also condition
(\ref{IntCond}). Then for any $t\in\mathbb{R}$ and $D=\{D(\tau):\tau
\in\mathbb{R}\}\in\mathcal{D}$ there exists $T=T(t,D)$ such that if $s\geq T$,
then every solution $u$ with initial condition at time $\tau=t-s$ given by
$u_{t-s}\in D(t-s)$ satisfies
\begin{equation}
\mathbb{E}(\left\Vert u(t)\right\Vert ^{2})\leq M(1+e^{-\omega_{0}t}%
\int_{-\infty}^{t}e^{\omega_{0}r}\left\Vert h(r)\right\Vert ^{2}%
dr)=:R_{0}(t)\text{, } \label{R0}%
\end{equation}
where $M>0$ is a constant which depends on $\omega_{0}.$
\end{lemma}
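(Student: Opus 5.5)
The plan is to apply Lemma \ref{Est1} with initial time $\tau=t-s$ and initial datum $u_{t-s}\in D(t-s)$. Lemma \ref{Est1} then gives
\[
\mathbb{E}(\left\Vert u(t)\right\Vert ^{2})\leq e^{-\omega_{0}s}\mathbb{E}(\left\Vert u_{t-s}\right\Vert ^{2})+\frac{K_{1}}{\omega_{0}}+K_{2}\int_{t-s}^{t}e^{-\omega_{0}(t-r)}\left\Vert h(r)\right\Vert ^{2}dr .
\]
The constant $\omega_{0}$ here must be the same one fixed in (\ref{IntCond}) and used to define $\mathcal{D}$. This is legitimate because Lemma \ref{Est1} holds for any $0<\omega_{0}<m\lambda_{1}-\gamma_{4}-C_{\sigma}^{2}$, and the $\omega_{0}$ of (\ref{IntCond}) lies in that range. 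The constants $K_{1},K_{2}$ then depend only on $\omega_{0}$ (through the choice of $\varepsilon$ in the proof of Lemma \ref{Est1}) and on the structural constants, not on $t$ or $D$.

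Next I treat the three terms separately. For the integral term, note that $e^{-\omega_{0}(t-r)}=e^{-\omega_{0}t}e^{\omega_{0}r}$. Extending the domain of integration to $(-\infty,t]$ gives the bound $K_{2}e^{-\omega_{0}t}\int_{-\infty}^{t}e^{\omega_{0}r}\left\Vert h(r)\right\Vert ^{2}dr$, which is finite by (\ref{IntCond}).

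For the initial-data term, write
\[
e^{-\omega_{0}s}\mathbb{E}(\left\Vert u_{t-s}\right\Vert ^{2})\leq e^{-\omega_{0}t}\,e^{\omega_{0}(t-s)}\left\Vert D(t-s)\right\Vert _{+}^{2}.
\]
Since $D\in\mathcal{D}$, we have $e^{\omega_{0}\tau}\left\Vert D(\tau)\right\Vert _{+}^{2}\rightarrow0$ as $\tau=t-s\rightarrow-\infty$. Hence there is $T=T(t,D)$ such that $e^{\omega_{0}(t-s)}\left\Vert D(t-s)\right\Vert _{+}^{2}\leq e^{\omega_{0}t}$ for all $s\geq T$, and so this term is at most $1$ for $s\geq T$.

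Collecting the three bounds gives, for $s\geq T$,
\[
\mathbb{E}(\left\Vert u(t)\right\Vert ^{2})\leq1+\frac{K_{1}}{\omega_{0}}+K_{2}e^{-\omega_{0}t}\int_{-\infty}^{t}e^{\omega_{0}r}\left\Vert h(r)\right\Vert ^{2}dr .
\]
Taking $M=\max\{1+K_{1}/\omega_{0},K_{2}\}$ yields (\ref{R0}), with $M$ depending only on $\omega_{0}$.

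The main point requiring care is the first one: Lemma \ref{Est1} must be invoked with exactly the $\omega_{0}$ of (\ref{IntCond}), so that the decay rate in the Gronwall estimate matches both the weight defining $\mathcal{D}$ and the weight in the integrability condition on $h$. Once the rates match, the tail and initial-data terms are controlled exactly as above, and the rest is bookkeeping.
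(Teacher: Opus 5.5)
Your proposal is correct and follows the paper's own proof essentially step for step. You apply Lemma \ref{Est1} with $\tau=t-s$ and use $D\in\mathcal{D}$ to get $e^{-\omega_{0}s}\mathbb{E}(\Vert u_{t-s}\Vert^{2})\leq 1$ for $s\geq T(t,D)$; you then extend the $h$-integral to $(-\infty,t]$ via (\ref{IntCond}), and your choice $M=\max\{1+K_{1}/\omega_{0},K_{2}\}$ simply makes explicit the constant the paper leaves implicit.
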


\begin{proof}
Since $u_{t-s}\in D(t-s)$, we have%
\[
e^{-\omega_{0}s}\mathbb{E}(\left\Vert u_{t-s}\right\Vert ^{2})\leq
e^{-\omega_{0}t}e^{\omega_{0}(t-s)}\left\Vert D(t-s)\right\Vert _{+}%
^{2}\rightarrow0\text{ as }s\rightarrow+\infty,
\]
so there exists $T(t,D)$ for which
\[
e^{-\omega_{0}s}\mathbb{E}(\left\Vert u_{t-s}\right\Vert ^{2})\leq1\text{ if
}s\geq T.
\]
From the estimate in Lemma \ref{Est1} we obtain that%
\[
\mathbb{E}(\left\Vert u(t)\right\Vert ^{2})\leq1+\frac{K_{1}}{\omega_{0}%
}+K_{2}e^{-\omega_{0}t}\int_{-\infty}^{t}e^{\omega_{0}r}\left\Vert
h(r)\right\Vert ^{2}dr,\text{ for }s\geq T,
\]
proving the result.
\end{proof}

\bigskip

We define now the family of bounded closed convex sets $K_{0}=\{K_{0}%
(t):\tau\in\mathbb{R}\}$ given by%
\[
K_{0}(t)=\{u\in L^{2}(\Omega,\mathcal{F}_{t};H):\mathbb{E}(\left\Vert
u\right\Vert ^{2})\leq R_{0}(t)\},
\]
where $R_{0}(t)$ is the function in (\ref{R0}). It is clear that the sets
$K_{0}(t)$ are weakly compact. We will prove that this family is $\mathcal{D}%
$-pullback absorbing and that $K_{0}\in\mathcal{D}$.

\begin{lemma}
\label{Absorbing}Assume the conditions of Lemma \ref{Est2}. Then $K_{0}$ is a
weakly compact $\mathcal{D}$-pullback absorbing family which belongs to
$\mathcal{D}$.
\end{lemma}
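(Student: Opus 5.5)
Three properties have to be checked for $K_0$: weak compactness of each $K_0(t)$, the $\mathcal{D}$-pullback absorbing property, and membership $K_0\in\mathcal{D}$.

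\emph{Weak compactness.} For each $t$, $K_0(t)$ is the closed ball of radius $R_0(t)^{1/2}$ in $L^{2}(\Omega,\mathcal{F}_{t};H)$. Here $R_0(t)<\infty$ by (\ref{IntCond}). This space is a closed subspace of $L^2(\Omega,\mathcal{F};H)$, so it is a Hilbert space and in particular reflexive. A closed ball there is bounded, convex and strongly closed, hence weakly closed. By reflexivity (Kakutani) it is weakly compact. It is also non-empty, since it contains $0$.

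\emph{Absorption.} Fix $t\in\mathbb{R}$ and $D\in\mathcal{D}$, and take $T=T(t,D)$ from Lemma \ref{Est2}. For $s\geq T$ and $u_{t-s}\in D(t-s)$, the solution satisfies $\mathbb{E}\|u(t)\|^2\le R_0(t)$, that is,
\[
\Phi(s,t-s)u_{t-s}=u(t)\in K_0(t).
\]
We also need $u(t)\in L^2(\Omega,\mathcal{F}_t;H)$. This holds because the solution is $\{\mathcal{F}_t\}$-adapted and $\Phi$ maps $L^2(\Omega,\mathcal{F}_{t-s};H)$ into $L^2(\Omega,\mathcal{F}_t;H)$. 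Hence $\Phi(s,t-s)D(t-s)\subset K_0(t)$ for all $s\ge T$.

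\emph{Membership in $\mathcal{D}$.} We have $\|K_0(\tau)\|_+^2=R_0(\tau)$, so we must show $e^{\omega_0\tau}R_0(\tau)\to0$ as $\tau\to-\infty$. Directly,
\[
e^{\omega_0\tau}R_0(\tau)=Me^{\omega_0\tau}+M\int_{-\infty}^{\tau}e^{\omega_0 r}\|h(r)\|^2dr .
\]
The first term tends to $0$ because $\omega_0>0$. The second is the tail of an integral that is finite by (\ref{IntCond}), so it tends to $0$ by dominated (or monotone) convergence.

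This last step is the only one needing care, and it is the main (minor) obstacle. If one wants a margin, one can instead use a slightly smaller rate $\omega_0'<\omega_0$. Here, however, the universe is defined with the same $\omega_0$, and the computation above already works.
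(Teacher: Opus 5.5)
Your proof is correct and matches the paper's argument. Absorption follows directly from Lemma \ref{Est2}, and membership in $\mathcal{D}$ follows from $e^{\omega_0\tau}R_0(\tau)=M\bigl(e^{\omega_0\tau}+\int_{-\infty}^{\tau}e^{\omega_0 r}\|h(r)\|^2dr\bigr)\to 0$ as $\tau\to-\infty$. You also spell out two points the paper takes as clear: weak compactness of closed balls in the Hilbert space $L^2(\Omega,\mathcal{F}_t;H)$, and adaptedness giving $u(t)\in L^2(\Omega,\mathcal{F}_t;H)$.
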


\begin{proof}
In view of Lemma \ref{Est2}, for any $t\in\mathbb{R}$ and $D=\{D(\tau):\tau
\in\mathbb{R}\}\in\mathcal{D}$ there exists $T=T(t,D)$ such that if $s\geq T$,
then%
\[
\Phi(s,t-s,D(t-s))\subset K_{0}(t),
\]
so $K_{0}$ is a weakly compact $\mathcal{D}$-pullback absorbing family.
Finally, we see that%
\[
e^{\omega_{0}\tau}R_{0}(\tau)=M(e^{\omega_{0}\tau}+\int_{-\infty}^{\tau
}e^{\omega_{0}r}\left\Vert h(r)\right\Vert ^{2}dr)\rightarrow0\text{ as }%
\tau\rightarrow-\infty,
\]
so $K_{0}\in\mathcal{D}.$
\end{proof}

\bigskip

From Lemma \ref{Absorbing} and Theorem \ref{AttrExist2} we deduce the main
result concerning the existence of the weak $\mathcal{D}$-pullback attractor.

\begin{theorem}
Assume the conditions of Lemma \ref{Est2}. Then the mean random dynamical
system $\Phi$ has a unique weak $\mathcal{D}$-pullback mean random attractor
$\mathcal{A}_{0}=\{\mathcal{A}(\tau):\tau\in\mathbb{R}\}\in\mathcal{D}$.
\end{theorem}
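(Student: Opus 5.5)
The theorem follows by applying the abstract existence result, Theorem \ref{AttrExist2}, to the mean random dynamical system $\Phi$ generated by (\ref{1}) on $L^{2}(\Omega,\mathcal{F};H)$ over $(\Omega,\mathcal{F},\{\mathcal{F}_{t}\}_{t\in\mathbb{R}},P)$. I will verify its hypotheses one at a time.

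\emph{Reflexivity.} The state space $X=H=L^{2}(\mathcal{O})$ is a Hilbert space. Hence each $L^{2}(\Omega,\mathcal{F}_{t};H)$ is a Hilbert space and therefore reflexive. This is what makes closed bounded convex sets weakly compact.

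\emph{$\mathcal{D}$ is inclusion-closed.} Let $D\in\mathcal{D}$ and let $B(t)\subset D(t)$ be nonempty bounded subsets of $L^{2}(\Omega,\mathcal{F}_{t};H)$. Then $\Vert B(t)\Vert_{+}\leq\Vert D(t)\Vert_{+}$, so
\[
e^{\omega_{0}t}\Vert B(t)\Vert_{+}^{2}\leq e^{\omega_{0}t}\Vert D(t)\Vert_{+}^{2}\rightarrow0\quad\text{as } t\rightarrow-\infty ,
\]
and therefore $B\in\mathcal{D}$.

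\emph{$\Phi$ is a mean random dynamical system in the sense of Definition \ref{MRDS2}.} I need three facts.
\begin{enumerate}
\item Measurability: $\Phi(t,\tau)$ maps $L^{2}(\Omega,\mathcal{F}_{\tau};H)$ into $L^{2}(\Omega,\mathcal{F}_{t+\tau};H)$. This holds because the solution is $\{\mathcal{F}_{t}\}$-adapted and (\ref{ExpectBounded}) gives square integrability.
\item Identity: $\Phi(0,\tau)$ is the identity, which is immediate from the definition.
\item Cocycle property: the restriction of the solution starting at time $\tau$ to $[\tau+s,\infty)$ solves (\ref{1}) with initial datum $u(\tau+s)$. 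The Itô integral splits additively over $[\tau,\tau+s]$ and $[\tau+s,t]$, and uniqueness of solutions then gives $\Phi(t+s,\tau)=\Phi(t,s+\tau)\circ\Phi(s,\tau)$.
\end{enumerate}

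\emph{Absorbing family.} Lemma \ref{Absorbing} provides a weakly compact $\mathcal{D}$-pullback absorbing family $K_{0}\in\mathcal{D}$. With all hypotheses in place, Theorem \ref{AttrExist2} yields existence and uniqueness of the weak $\mathcal{D}$-pullback mean random attractor, given by
\[
\mathcal{A}(t)=\cap_{r\geq0}\overline{\cup_{s\geq r}\Phi(s,t-s)K_{0}(t-s)}^{w}.
\]
Theorem \ref{AttrExist2} also gives $\mathcal{A}\in\mathcal{D}$. Alternatively, $\mathcal{A}(t)\subset K_{0}(t)$ because $K_{0}(t)$ is convex and closed, hence weakly closed, and membership in $\mathcal{D}$ then follows from inclusion-closedness.

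The only point needing care is the weak compactness of $K_{0}(t)$ inside the subspace $L^{2}(\Omega,\mathcal{F}_{t};H)$. This subspace is strongly closed, since $L^{2}$-limits of strongly $\mathcal{F}_{t}$-measurable functions have a.e.\ $\mathcal{F}_{t}$-measurable representatives and the filtration is complete. It is also convex, so it is weakly closed. Consequently $K_{0}(t)$, a closed ball of this Hilbert space, is weakly compact by Banach--Alaoglu. The statement was already asserted before Lemma \ref{Absorbing}, and this argument supplies its justification.
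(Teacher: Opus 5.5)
Your proposal is correct and follows the paper's route exactly. The paper deduces the theorem in one line from Lemma \ref{Absorbing} and Theorem \ref{AttrExist2}. Your additional checks (reflexivity of $H$, inclusion-closedness of $\mathcal{D}$, the cocycle property via uniqueness, and weak compactness of $K_{0}(t)$ as a closed ball in the Hilbert space $L^{2}(\Omega,\mathcal{F}_{t};H)$) make explicit hypotheses that the paper takes for granted.
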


\bigskip

As in the previous section, under an additional assumption on the function
$h\left(  t\right)  $ we can prove that the radius $R_{0}(t)$ and the weak
$\mathcal{D}$-pullback mean random attractor are bounded in either one or both directions.

\begin{lemma}
\label{AttrBounded}Under the conditions of Lemma \ref{Est2}, if
\[
\sup_{t\leq t_{0}}e^{-\omega_{0}t}\int_{-\infty}^{t}e^{\omega_{0}r}\left\Vert
h(r)\right\Vert ^{2}dr<\infty
\]
for some $t_{0}\in\mathbb{R}$, then $\sup_{t\leq\overline{t}}R_{0}(t)<\infty$
for any $\overline{t}\in\mathbb{R}$. Hence, the union $\cup_{t\leq\overline
{t}}\mathcal{A}(t)$ is bounded for any $\overline{t}\in\mathbb{R}$.

If
\[
\sup_{t\in\mathbb{R}}e^{-\omega_{0}t}\int_{-\infty}^{t}e^{\omega_{0}%
r}\left\Vert h(r)\right\Vert ^{2}dr<\infty,
\]
then $\sup_{t\in\mathbb{R}}R_{0}(t)<\infty$. Hence, the union $\cup
_{t\in\mathbb{R}}\mathcal{A}(t)$ is bounded.
\end{lemma}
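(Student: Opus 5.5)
The plan is to prove the bounds on $R_0$ by inspecting its formula, and then to bound the attractor by showing that each $\mathcal{A}(t)$ lies inside $K_0(t)$.

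\emph{Backwards bound on $R_0$.} Set
\[
\psi(t)=e^{-\omega_{0}t}\int_{-\infty}^{t}e^{\omega_{0}r}\Vert h(r)\Vert^{2}dr ,
\]
so that $R_0(t)=M(1+\psi(t))$. By hypothesis, $\sup_{t\le t_0}\psi(t)<\infty$. If $\overline t\le t_0$ there is nothing more to prove. If $\overline t>t_0$, it remains to bound $\psi$ on $[t_0,\overline t]$. For $t$ in this interval,
\[
\psi(t)\le e^{-\omega_0 t}\Big(e^{\omega_0 t_0}\psi(t_0)+\int_{t_0}^{\overline t}e^{\omega_0 r}\Vert h(r)\Vert^2dr\Big)
\le e^{\omega_0 |t_0|}\psi(t_0)+e^{2\omega_0(|t_0|+|\overline t|)}\int_{t_0}^{\overline t}\Vert h(r)\Vert^2 dr .
\]
The last integral is finite because $h\in L^2_{loc}$ by (\ref{h}). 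Alternatively, one can simply note that $\psi$ is continuous on the compact interval $[t_0,\overline t]$. Either way, $\sup_{t\le\overline t}R_0(t)<\infty$. The second statement, with the supremum over all $t\in\mathbb{R}$, is immediate from the definition of $R_0$.

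\emph{Bounding the attractor.} The key step is the inclusion $\mathcal{A}(t)\subset K_0(t)$ for every $t$. I would obtain it from the minimality of the attractor. By Lemma \ref{Absorbing}, $K_0\in\mathcal{D}$ is weakly compact and $\mathcal{D}$-pullback absorbing. An absorbing family is also weakly attracting, since every weak neighborhood of $K_0(t)$ contains $K_0(t)$. Hence $K_0$ satisfies conditions 1--2 of the definition of attractor, and minimality gives $\mathcal{A}(t)\subset K_0(t)$. As a cross-check, the formula (\ref{AttrCharac2}) gives the same inclusion: $K_0(t)$ is convex and closed, hence weakly closed, and it contains $\Phi(s,t-s)K_0(t-s)$ for all large $s$.

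\emph{Conclusion.} For every $y\in\mathcal{A}(t)$ we then have $\Vert y\Vert^2_{L^2(\Omega,H)}\le R_0(t)$. Taking the supremum over $t\le\overline t$, or over all $t\in\mathbb{R}$, and using the bounds on $R_0$ from the first step gives boundedness of the corresponding unions of attractor sections.

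The only mildly delicate point is the finite-interval step when $\overline t>t_0$, and local integrability of $h$ handles it.
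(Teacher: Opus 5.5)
Your proof is correct and follows the argument the paper leaves implicit; the paper gives no separate proof of this lemma and only describes the analogous Section 3 statement as ``straightforward to check.'' That argument is to bound $R_0(t)=M\bigl(1+e^{-\omega_0 t}\int_{-\infty}^{t}e^{\omega_0 r}\Vert h(r)\Vert^2dr\bigr)$ on $(-\infty,\overline t\,]$ via the hypothesis plus local integrability of $h$ on $[t_0,\overline t\,]$, and then to use $\mathcal{A}(t)\subset K_0(t)$, which follows from minimality or from the characterization formula together with the weak closedness of $K_0(t)$.
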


\begin{corollary}
If $h$ does not depend on time, that is, $h(t)\equiv h_{0}\in H$, then the
union $\cup_{t\in\mathbb{R}}\mathcal{A}(t)$ is bounded.
\end{corollary}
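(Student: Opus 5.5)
The statement is the last corollary: if $h(t)\equiv h_{0}\in H$, then $\cup_{t\in\mathbb{R}}\mathcal{A}(t)$ is bounded. The plan is to verify the second hypothesis of Lemma \ref{AttrBounded} and then pass from the radius $R_{0}$ to the attractor.

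\textbf{Step 1: the uniform bound on the weighted integral.} For constant $h$ I will compute the integral explicitly. Since $\omega_{0}>0$, for every $t\in\mathbb{R}$
\[
e^{-\omega_{0}t}\int_{-\infty}^{t}e^{\omega_{0}r}\left\Vert h_{0}\right\Vert ^{2}dr=\left\Vert h_{0}\right\Vert ^{2}e^{-\omega_{0}t}\frac{e^{\omega_{0}t}}{\omega_{0}}=\frac{\left\Vert h_{0}\right\Vert ^{2}}{\omega_{0}} .
\]
This is independent of $t$, so its supremum over $t\in\mathbb{R}$ is finite. The same computation with $t$ replaced by any fixed time shows that condition (\ref{IntCond}) holds. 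Also $h$ is trivially in $L^{2}(\tau,T;H)$, so (\ref{h}) holds, and the conditions of Lemma \ref{Est2} are therefore satisfied.

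\textbf{Step 2: boundedness of $R_{0}$.} The second part of Lemma \ref{AttrBounded} now gives $\sup_{t\in\mathbb{R}}R_{0}(t)=M(1+\left\Vert h_{0}\right\Vert ^{2}/\omega_{0})<\infty$.

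\textbf{Step 3: from the radius to the attractor.} This is the only point needing care. By Theorem \ref{AttrExist2}, $\mathcal{A}(t)$ is the intersection of weak closures of sets $\Phi(s,t-s)K_{0}(t-s)$. Because $K_{0}$ is absorbing, for $s$ large these sets lie in $K_{0}(t)$. The set $K_{0}(t)$ is closed, convex and bounded, hence weakly closed. Consequently $\mathcal{A}(t)\subset K_{0}(t)$, which is contained in the ball of radius $\sqrt{\sup_{t}R_{0}(t)}$ in $L^{2}(\Omega,H)$, uniformly in $t$. Equivalently, this follows from minimality of $\mathcal{A}$, since $K_{0}$ is weakly compact and absorbing, hence attracting.

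Thus $\cup_{t\in\mathbb{R}}\mathcal{A}(t)$ is bounded. No real obstacle is expected; Step 3 is already contained in Lemma \ref{AttrBounded}.
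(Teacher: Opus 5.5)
Your proposal is correct and takes the same route as the paper: the corollary follows from the second part of Lemma \ref{AttrBounded} via the computation $e^{-\omega_{0}t}\int_{-\infty}^{t}e^{\omega_{0}r}\Vert h_{0}\Vert^{2}dr=\Vert h_{0}\Vert^{2}/\omega_{0}$. Your Step 3 shows $\mathcal{A}(t)\subset K_{0}(t)$, either because $K_{0}(t)$ is weakly closed and absorbing or by minimality, and this correctly supplies the ``hence'' step that the paper leaves unproved in that lemma.
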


\bigskip

\textbf{Acknowledgments.}

This work has been partially supported by the Spanish Ministry of Science,
Innovation and Universities, project PGC2018-096540-B-I00, by the Spanish
Ministry of Science and Innovation, project PID2019-108654GB-I00, and by the
Junta de Andaluc\'{\i}a and FEDER, projects US-1254251, P18-FR-4509.

This manuscript is dedicated to the memory of our colleague in the department
and research group and friend Mar\'{\i}a Jos\'{e} Garrido-Atienza who
unfortunately passed away in January 2021, on her 49th birthday. She was a
very kind, brave, warm, vital, joyful, plenty of life energy, enthusiastic
person, and a source of happiness in any meeting she participated. We will
miss her a lot and will keep her forever in our hearts. Our big hug to her
family, with deep love and sorrow.

\end{document}